\documentclass{article}
\usepackage{tikz-cd}
\usepackage{fullpage}
\usepackage{authblk}
\usepackage{amsmath}
\usepackage{amsthm}
\usepackage{amssymb}
\usepackage{bm}
\usepackage{setspace}
\usepackage[numbers,sort&compress]{natbib}
\usepackage{color}

\usepackage{amsfonts}
\usepackage{eucal}
\usepackage{latexsym}
\usepackage{mathdots}
\usepackage{mathrsfs}
\usepackage{enumitem}




\newcommand{\be}{\begin{equation}}
\newcommand{\ee}{\end{equation}}
\newcommand{\ba}{\begin{eqnarray*}}
\newcommand{\ea}{\end{eqnarray*}}
\newcommand{\bal}{\begin{align}}
\newcommand{\eal}{\end{align}}
\newcommand{\baln}{\begin{align*}}
\newcommand{\ealn}{\end{align*}}
\newcommand{\bi}{\begin{itemize}}
\newcommand{\ei}{\end{itemize}}
\newcommand{\bn}{\begin{enumerate}}
\newcommand{\en}{\end{enumerate}}
\newcommand{\bbm}{\begin{bmatrix}}
\newcommand{\ebm}{\end{bmatrix}}
\newcommand{\bpm}{\begin{pmatrix}}
\newcommand{\epm}{\end{pmatrix}}
\newcommand{\bsm}{\left ( \begin{smallmatrix}}
\newcommand{\esm}{\end{smallmatrix} \right) }
\newcommand{\bp}{\begin{proof}}
\newcommand{\ep}{\end{proof}}

\newcommand{\mr}{\ensuremath{\mathrm}}
\newcommand{\scr}{\ensuremath{\mathscr}}

\newcommand{\mf}{\ensuremath{\mathfrak}}
\newcommand{\ov}{\ensuremath{\overline}}

\newcommand{\wt}{\ensuremath{\widetilde}}


\newcommand{\ga}{\ensuremath{\gamma}}
\newcommand{\Om}{\ensuremath{\Omega}}

\newcommand{\la}{\ensuremath{\lambda }}

\newcommand{\eps}{\ensuremath{\epsilon }}


\def\C{\mathbb{C}}
\def\R{\mathbb{R}}
\def\D{\mathbb{D}}

\def\N{\mathbb{N}}

\def\fq{\mathfrak{q}}

\def\nbran{\mathrm{Ran} \, }
\def\nbdom{\mathrm{Dom} \, }
\def\nbker{\mathrm{Ker} \, }
\def\nbre{\mathrm{Re} \, }
\def\nbim{\mathrm{Im} \, }
\newcommand{\cH}{\ensuremath{\mathcal{H}}}
\newcommand{\cJ}{\ensuremath{\mathcal{J} }}

\newcommand{\Addresses}{{
  \bigskip

Jashan~Bal, \textsc{Department of Pure Mathematics, University of Waterloo}\par\nopagebreak
\textit{E-mail address:} \texttt{j2bal@uwaterloo.ca}
    
\medskip

Robert~T.~W.~Martin, \textsc{Department of Mathematics, University of Manitoba}\par\nopagebreak
  \textit{E-mail address:} \texttt{Robert.Martin@umanitoba.ca}

\medskip

Fouad~Naderi, \textsc{Department of Mathematics, University of Manitoba}\par\nopagebreak
  \textit{E-mail address:} \texttt{naderif@myumanitoba.ca}

\vspace{1cm}

}}


\newcommand{\ip}[2]{\ensuremath{\left\langle {#1} , {#2} \right\rangle}}


\newtheorem{thm}{Theorem}

\newtheorem{lemma}{Lemma}
\newtheorem{prop}{Proposition}
\newtheorem{cor}{Corollary}
\newtheorem*{thm*}{Theorem}

\theoremstyle{definition}
\newtheorem{defn}{Definition}
\newtheorem{remark}{Remark}
\newtheorem{eg}{Example}

\title{A reproducing kernel approach to Lebesgue decomposition}

\author[1]{Jashan Bal}
\affil[1]{\footnotesize University of Waterloo}

\author[2]{Robert T.W. Martin\thanks{Supported by NSERC grant 2020-05683}}

\author[2]{Fouad Naderi}
\affil[2]{\footnotesize University of Manitoba}

\date{\vspace{-1.2cm}}

\begin{document}
\maketitle
\begin{abstract}
We show that properties of pairs of finite, positive and regular Borel measures on the complex unit circle such as domination, absolute continuity and singularity can be completely described in terms of containment and intersection of their reproducing kernel Hilbert spaces of `Cauchy transforms' in the complex unit disk. This leads to a new construction of the classical Lebesgue decomposition and proof of the Radon--Nikodym theorem using reproducing kernel theory and functional analysis. \\
\end{abstract}

Given a finite, positive and regular Borel measure, $\mu$, on the complex unit circle, $\partial \D$, it is natural to consider its $L^2-$space, $L^2 (\mu ) := L^2 (\mu , \partial \D )$, as well as $H^2 (\mu ) := \C [ \zeta ] ^{-\| \cdot \| _{L^2(\mu )}}$, the closure of the analytic polynomials, $\C [\zeta ]$, in $L^2 (\mu )$. The linear operator of multiplication by the independent variable, $M^\mu _\zeta$, is unitary on $L^2 (\mu )$ and has $H^2 (\mu )$ as a closed invariant subspace so that $Z^\mu := M^\mu _\zeta | _{H^2 (\mu )}$ is an isometry that will play a central role in our analysis. The $\mu-$Cauchy transform of any $h \in H^2 (\mu )$ is the analytic function,
$$ (\scr{C} _\mu h) (z) := \int _{\partial \D} \frac{h (\zeta)}{1- z \ov{\zeta}} \mu (d\zeta ) \in \scr{O} (\D ), $$ in the complex unit disk, $\D := (\C ) _1$. Here, given a Banach space, $X$, $(X) _1$ and $[X]_1$ denote its open and closed unit balls in the norm topology. 

Recall that a reproducing kernel Hilbert space (RKHS), $\cH$, is a Hilbert space of functions on a set, $X$, so that point evaluation at any point $x \in X$ yields a bounded linear functional on the space. The Riesz representation lemma then implies the existence of \emph{kernel vectors}, $k_x$, $x \in X$, so that the bounded linear functional of point evaluation at $x$ is implemented by inner products against $k_x$. The function of two variables, $k : X \times X \rightarrow \C$, 
$$ k(x,y) := \ip{k_x}{k_y}_{\cH}, $$ is then called the \emph{reproducing kernel} of $\cH$. In this paper, all inner products and sesquilinear forms are conjugate linear in their first argument and linear in their second argument. Any reproducing kernel function is a \emph{positive kernel} function on $X \times X$, \emph{i.e.} for any finite set $\{ x_1, \cdots, x_n \} \subseteq X$, the $n \times n$ matrix,
\be [ k (x_i , x_j) ] _{1 \leq i,j \leq n} \geq 0, \label{poskernel} \ee is positive semi-definite. Conversely, by a theorem of Aronszajn and Moore, given any positive kernel function, $k$, on $X \times X$, one can construct a RKHS of functions on $X$ with reproducing kernel $k$ \cite{Aron-rkhs}, see Subsection \ref{ss-rkhs}. Given this bijective correspondence between positive kernel functions on $X$ and RKHS of functions on $X$, one writes $\cH = \cH (k)$ if $\cH$ is a RKHS with reproducing kernel $k$. 

Equipping the vector space of $\mu-$Cauchy transforms with the $H^2 (\mu)-$inner product, 
$$ \ip{\scr{C} _\mu g}{\scr{C} _\mu h}_{\mu} := \ip{g}{h}_{L^2 (\mu )}; \quad \quad g,h \in H^2 (\mu ), $$ yields a reproducing kernel Hilbert space (RKHS) of analytic functions in $\D$, $\scr{H} ^+ ( \mu )$, with reproducing kernel, 
\be k^\mu (z,w ) = \int _{\partial \D} \frac{1}{1-z \ov{\zeta}} \frac{1}{1-\ov{w}\zeta} \mu (d\zeta ); \quad \quad z,w \in \D. \label{repkernel} \ee  
Using the above formula (\ref{repkernel}), it is easy to check that domination of measures implies domination of the reproducing kernels for their spaces of Cauchy transforms,
$$ \mu \leq t^2 \la, \ t >0 \quad \quad \Rightarrow \quad \quad k^\mu \leq t^2 k^\la, $$ see Theorem \ref{RKdominate}, where we write $k \leq K$ for positive kernel functions $k,K$ on $X$, if $K-k$ is a positive kernel function on $X$. We will say that \emph{$\la$ dominates $\mu$ in the reproducing kernel sense} (by $t^2 >0$) and write $\mu \leq _{RK} t^2 \la$ to denote that $k^\mu \leq t^2 k^\la$. By results of Aronszajn, domination of kernels, $k \leq t^2 K$, is equivalent to bounded containment of their RKHS, \emph{i.e.} $k \leq t^2 K$ if and only if $\cH (k) \subseteq \cH (K)$ and the norm of the linear embedding $\mr{e} : \cH (k) \hookrightarrow \cH (K)$ is at most $t >0$ \cite{Aron-rkhs}. See Subsection \ref{ss-rkhs} for a review of RKHS theory and these results. In summary, domination of measures implies bounded containment of their spaces of Cauchy transforms:
$$ \mu \leq t^2 \la \quad \Rightarrow \quad \scr{H} ^+ (\mu ) \subseteq \scr{H} ^+ (\la ),  \ \mr{e} _{\mu ,\la} : \scr{H} ^+ (\mu ) \hookrightarrow \scr{H} ^+ (\la), \ \| \mr{e} _{\mu, \la} \| \leq t, $$ \emph{i.e.} $\mu \leq t^2 \la$ $\Rightarrow$ $\mu \leq _{RK} t^2 \la$.

Building on this observation, we show that domination and, more generally, absolute continuity, as well as mutual singularity of measures can be completely characterized in terms of their spaces of Cauchy transforms. Moreover, we develop an independent construction of the Lebesgue decomposition and new proof of the Radon--Nikodym theorem using reproducing kernel methods and operator theory. 

\subsection*{Outline}

The following Background section, Section \ref{s-back}, provides an introduction to (i) the bijective correspondence between positive, finite and regular Borel measures on the circle and contractive analytic functions in the disk, (ii) reproducing kernel theory and (iii) the theory of densely--defined and positive semi-definite quadratic forms in a separable, complex Hilbert space.

Section \ref{CTsec} introduces the reproducing kernel Hilbert spaces, $\scr{H} ^+ (\mu )$, of $\mu-$Cauchy transforms associated to any positive, finite and regular Borel measure, $\mu$, on the complex unit circle. These are Hilbert spaces of holomorphic functions in the complex unit disk. 

Our first main results appear in Section \ref{sec:RKac}. Theorem \ref{RKdominate} proves that domination of positive measures in the reproducing kernel sense is equivalent to domination in the classical sense:

\begin{thm*}[Theorem \ref{RKdominate}]
Given positive, finite and regular Borel measures, $\mu$ and $\la$ on the unit circle, $\mu \leq _{RK} t^2 \la$ for some $t>0$ if and only if $\mu \leq t^2 \la$.
\end{thm*}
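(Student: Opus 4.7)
The forward direction $\mu \le t^2 \la \Rightarrow \mu \le_{RK} t^2 \la$ is essentially immediate from the integral formula \eqref{repkernel}: if $t^2 \la - \mu$ is a positive measure, then
\[ t^2 k^\la(z,w) - k^\mu(z,w) \; = \; \int_{\partial\D} \frac{1}{1-z\ov{\zeta}}\,\frac{1}{1-\ov{w}\zeta}\,(t^2\la - \mu)(d\zeta) \]
is an integral of the elementary positive kernels $(z,w)\mapsto (1-z\ov{\zeta})^{-1}(1-\ov{w}\zeta)^{-1}$ against a positive measure, hence is itself a positive kernel on $\D$.

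The substance is the converse. My plan is to transfer the kernel inequality back through the Cauchy transforms to a contractive map between the Hardy spaces, and then use Fej\'er--Riesz factorization to convert a Hilbert-space norm estimate into an inequality of measures. So assume $k^\mu \le t^2 k^\la$. Aronszajn's theorem produces a contractive inclusion $\mr{e}:\scr{H}^+(\mu)\hookrightarrow\scr{H}^+(\la)$ with $\|\mr{e}\|\le t$, whose Hilbert-space adjoint $\mr{e}^*:\scr{H}^+(\la)\to\scr{H}^+(\mu)$ satisfies $\mr{e}^*(k^\la_w)=k^\mu_w$ for every $w\in\D$, as one sees by pairing against any kernel function in $\scr{H}^+(\mu)$.

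The Cauchy transforms $\scr{C}_\mu:H^2(\mu)\to\scr{H}^+(\mu)$ and $\scr{C}_\la:H^2(\la)\to\scr{H}^+(\la)$ are unitary by construction of the $\scr{H}^+$ inner products, and $\scr{C}_\mu g_w = k^\mu_w$ for the function $g_w(\zeta):=(1-\ov{w}\zeta)^{-1}\in H^2(\mu)$. Hence the pullback $S:=\scr{C}_\mu^{-1}\mr{e}^*\scr{C}_\la:H^2(\la)\to H^2(\mu)$ is a contraction of norm at most $t$ sending $g_w\in H^2(\la)$ to $g_w\in H^2(\mu)$. By linearity, $S$ maps any finite sum $\sum c_i g_{w_i}$, viewed in $H^2(\la)$, to the \emph{same} continuous function on $\partial\D$ now viewed in $H^2(\mu)$, yielding
\[ \Big\|\sum c_i g_{w_i}\Big\|_{L^2(\mu)} \;\le\; t\, \Big\|\sum c_i g_{w_i}\Big\|_{L^2(\la)}. \]
Since each monomial $\zeta^n$ is a uniform limit on $\partial\D$ of finite combinations of $\{g_w\}$ (extract it by iterated divided differences of $g_w$ in $\ov{w}$ at $0$), this inequality passes to $\|p\|_{L^2(\mu)}\le t\,\|p\|_{L^2(\la)}$ for every analytic polynomial $p\in\C[\zeta]$.

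The last step invokes Fej\'er--Riesz: every nonnegative trigonometric polynomial $P$ on $\partial\D$ factors as $P=|p|^2$ with $p\in\C[\zeta]$, so the previous step gives $\int P\,d\mu\le t^2\int P\,d\la$. As nonnegative trig polynomials are uniformly dense in the nonnegative continuous functions on $\partial\D$ (e.g., Fej\'er means preserve positivity), the Riesz representation theorem delivers $\mu\le t^2\la$ as measures. The step I expect to require the most care is checking that $S$ genuinely acts as the ``identity'' at the level of continuous representatives -- a priori elements of $H^2(\mu)$ and $H^2(\la)$ are different equivalence classes -- together with the Fej\'er--Riesz step, which is the decisive trick for converting a Hilbert-space inequality into one between measures.
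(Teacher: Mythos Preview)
Your argument is correct and closely parallels the paper's own treatment. The forward direction is identical. For the converse, the paper gives two proofs: the first computes directly with Taylor-coefficient kernels to show that $\ell_K(|g|^2)\ge 0$ for analytic polynomials $g$ and then applies Fej\'er--Riesz and Riesz--Markov exactly as you do; the second builds the same co-embedding you call $S$ (the paper's $E:=\scr{C}_\mu^*\mr{e}^*\scr{C}_\la$), but then proceeds further via commutant lifting to extract the Radon--Nikodym derivative explicitly. Your proof is effectively a streamlined hybrid: you use the co-embedding $S$ from the second proof to obtain the norm inequality $\|p\|_{L^2(\mu)}\le t\,\|p\|_{L^2(\la)}$ for $p\in\C[\zeta]$, and then finish with the Fej\'er--Riesz and density argument of the first proof, avoiding both the coefficient-kernel bookkeeping and the commutant lifting theorem. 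The step you flagged---that $S$ acts as the identity on continuous representatives---is indeed the only place requiring care, and your reasoning (uniform approximation of monomials by spans of Szeg\H{o} kernels, hence simultaneous $L^2(\mu)$ and $L^2(\la)$ convergence) handles it correctly; the paper makes the same point by noting $\scr{K}_\D$ is sup-norm dense in $A(\D)$.
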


This result is extended to general absolute continuity, written $\mu \ll \la$, in Theorem \ref{RKac}. Namely, we say that $\mu$ is absolutely continuous in the reproducing kernel sense with respect to $\la$, written $\mu \ll _{RK} \la$, if the intersection of the space of $\mu-$Cauchy transforms with the space of $\la-$Cauchy transforms, $\mr{int} (\mu, \la)$, is norm-dense in $\scr{H} ^+ (\mu )$.

\begin{thm*}[Theorem \ref{RKac}]
Let $\mu, \la$ be positive, finite and regular Borel measures on $\partial \D$. Then $\mu \ll \lambda$ if and only if $\mu \ll _{RK} \la$.
\end{thm*}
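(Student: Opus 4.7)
I would prove the two implications separately: the forward direction uses truncation of the Radon--Nikodym density together with Theorem~\ref{RKdominate}, while the backward direction argues by contrapositive via the Lebesgue decomposition and the mutual singularity characterization established elsewhere in the paper.

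For the forward direction, assume $\mu \ll \la$ and apply classical Radon--Nikodym to write $d\mu = f\, d\la$ with $0 \leq f \in L^1(\la)$. Set $f_n := \min(f, n)$ and $d\mu_n := f_n\, d\la$, so $\mu_n \leq n\la$ and $\mu_n \leq \mu$. Theorem~\ref{RKdominate} applied to both inequalities then yields $\scr{H}^+(\mu_n) \subseteq \scr{H}^+(\mu) \cap \scr{H}^+(\la) = \mr{int}(\mu, \la)$. Since $\scr{C}_\mu$ is unitary from $H^2(\mu)$ onto $\scr{H}^+(\mu)$ and $\C[\zeta]$ is dense in $H^2(\mu)$ by definition, the polynomial Cauchy transforms are dense in $\scr{H}^+(\mu)$. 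Writing $g_n := f_n/f$ on $\{f > 0\}$ (and $1$ elsewhere), one checks directly that $\scr{C}_{\mu_n} p = \scr{C}_\mu(g_n p) \in \scr{H}^+(\mu_n)$ for every polynomial $p$, and the contractivity of $\scr{C}_\mu$ on $L^2(\mu)$ gives
$$
\| \scr{C}_\mu p - \scr{C}_{\mu_n} p \|_{\scr{H}^+(\mu)} \leq \| p(1 - g_n) \|_{L^2(\mu)},
$$
which tends to zero as $n \to \infty$ by dominated convergence, since $|1 - g_n| \leq 1$, $g_n \to 1$ pointwise $\mu$-a.e., and $p$ is bounded on $\partial\D$.

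For the backward direction, argue by contrapositive: assume $\mu \not\ll \la$ and Lebesgue decompose $\mu = \mu_{ac} + \mu_s$ with $\mu_s \neq 0$ and $\mu_s \perp \la$. The kernel identity $k^\mu = k^{\mu_{ac}} + k^{\mu_s}$, combined with the disjoint supports of $\mu_{ac}$ and $\mu_s$ that make $L^2(\mu) = L^2(\mu_{ac}) \oplus L^2(\mu_s)$ an orthogonal decomposition, upgrades the sum-of-kernels embeddings to an orthogonal direct sum $\scr{H}^+(\mu) = \scr{H}^+(\mu_{ac}) \oplus \scr{H}^+(\mu_s)$. For any $F \in \mr{int}(\mu, \la)$ write $F = F_{ac} + F_s$ in this splitting. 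The mutual singularity characterization of the paper---that $\mu_s \perp \la$ forces $\scr{H}^+(\mu_s) \cap \scr{H}^+(\la) = \{0\}$---is then used to conclude $F_s = 0$, placing $\mr{int}(\mu, \la)$ inside the proper closed subspace $\scr{H}^+(\mu_{ac})$ of $\scr{H}^+(\mu)$ and contradicting the density hypothesis.

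\emph{The main obstacle} is the backward direction, particularly the step concluding $F_s = 0$. This requires both that the kernel decomposition be genuinely orthogonal (not just a contractive sum, which uses the disjoint-support structure together with the polynomial closure defining $H^2(\mu)$) and that $F \in \scr{H}^+(\la) \cap \scr{H}^+(\mu)$ forces the $\scr{H}^+(\mu_s)$-projection $F_s$ to itself lie in $\scr{H}^+(\la)$. Both points rely on the detailed form of the singularity theorem proved earlier and on a careful treatment of the sum-of-kernels calculus; the rest of the argument is essentially bookkeeping.
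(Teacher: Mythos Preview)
Your forward direction is essentially the paper's Proposition~\ref{necessary} and is fine. The backward direction, however, has a fatal gap that cannot be repaired along the lines you indicate.

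The implication ``$\mu_s \perp \la \Rightarrow \scr{H}^+(\mu_s) \cap \scr{H}^+(\la) = \{0\}$'' that you invoke is \emph{false} in general: Example~\ref{LebesgueEg} in this very paper gives $m_+ \perp m_-$ with $\mr{int}(m_+, m_-) \neq \{0\}$. So there is no ``singularity theorem proved earlier'' to cite; the only result in this direction (the Corollary after Theorem~\ref{LDviaRKne}) comes \emph{after} Theorem~\ref{RKac}, depends on it, and in any case only yields the singularity equivalence under the extra hypothesis that $\mr{int}(\mu,\la)$ is $V_\mu$-reducing. For the same reason your claimed orthogonal splitting $\scr{H}^+(\mu) = \scr{H}^+(\mu_{ac}) \oplus \scr{H}^+(\mu_s)$ can fail: take $\mu = m$, $\la = m_+$, so that $\mu_{ac}=m_+$, $\mu_s=m_-$, and $\scr{H}^+(m_+)\cap\scr{H}^+(m_-)\neq\{0\}$; by Aronszajn's sums of kernels theorem the decomposition is then strictly a complementary-space sum, not a direct sum. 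The paper records exactly this phenomenon in Theorem~\ref{genRKLD} and the remark following it. Even granting orthogonality, you would still need $F_{ac}\in\scr{H}^+(\la)$ to deduce $F_s\in\scr{H}^+(\la)$, and $\mu_{ac}\ll\la$ does not give $\scr{H}^+(\mu_{ac})\subseteq\scr{H}^+(\la)$ (only density of the intersection).

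The paper's argument for sufficiency is entirely different and avoids any appeal to classical Lebesgue decomposition. It passes from $\mu$ to $\mu+\la$ (noting $\mu \ll_{RK}\la \Leftrightarrow \mu+\la \ll_{RK}\la$), builds the closed co-embedding $E_{\mu+\la,\la}$ and the self-adjoint Toeplitz operator $T_{\mu+\la}=E^*E$, and then identifies $T_{\mu+\la}$ with the compression of $I+M_f$ via Simon's Lebesgue decomposition of quadratic forms and the resolvent formula of Theorem~\ref{RNformula}. The extreme case (Proposition~\ref{extreme}) is handled first by showing $T_\mu$ is affiliated to the commutant of the unitary $M^\la_\zeta$. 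This machinery is what replaces the missing singularity implication in your outline.
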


Moreover, Theorem \ref{RKac} gives a formula for the Radon--Nikodym derivative of $\mu$ with respect to $\la$ in terms of the closed, densely--defined embedding, $\mr{e} _{\mu, \la} : \mr{int} (\mu , \la) := \scr{H} ^+ (\mu ) \cap \scr{H} ^+ (\la ) \subseteq \scr{H} ^+ (\mu ) \hookrightarrow \scr{H} ^+ (\la )$. 

These are satisfying results, however, actual construction of the Lebesgue decomposition of $\mu$ with respect to $\la$ using reproducing kernel methods is more subtle and bifurcates into the two cases, where: The intersection space, $\mr{int} (\mu , \la) = \scr{H} ^+ (\mu ) \cap \scr{H} ^+ (\la )$, of the spaces of $\mu$ and $\la-$Cauchy transforms is (i) invariant, or, (ii) not invariant, for the image, $V^\mu$, of $Z^\mu = M^\mu _\zeta | _{H^2 (\mu)}$ under Cauchy transform. Some necessary and sufficient conditions for this to hold are obtained in Lemma \ref{invint} and Proposition \ref{notred}. Namely, as described in Subsection \ref{ss:Clark}, there is a bijection between contractive analytic functions in the complex unit disk and positive, finite and regular Borel measures on the circle. If a positive measure, $\mu$, corresponds to an extreme point of this compact, convex set of contractive analytic functions, we say that $\mu$ is \emph{extreme}, otherwise $\mu$ is \emph{non-extreme}. As established in Lemma \ref{invint} and Proposition \ref{notred}, the intersection space, $\mr{int} (\mu , \la)$ will be $V_\mu-$reducing if (i) $\la$ is non-extreme or if (ii) $\mu +\la$ is extreme, and the intersection space will be non-trivial and not $V_\mu-$invariant if $\mu, \la$ are both extreme but $\mu + \la$ is non-extreme.

In the positive direction, we obtain:

\begin{thm*}[Theorem \ref{LDviaRKne}]
Let $\mu$ and $\la$ be finite, positive and regular Borel measures on the unit circle. If the intersection space, $\mr{int} (\mu , \la)$ is $V_\mu-$invariant and $\mu = \mu _{ac} + \mu_s$ is the Lebesgue decomposition of $\mu$ with respect to $\la$, then 
$$ \scr{H} ^+ (\mu ) = \scr{H} ^+ (\mu _{ac} ) \oplus \scr{H} ^+ (\mu _s). $$ In this case,
$$ \scr{H} ^+ (\mu _{ac} ) = \mr{int} (\mu , \la) ^{-\| \cdot \| _\mu }, \quad \mbox{and} \quad \scr{H} ^+ (\mu _s ) \cap \scr{H} ^+ (\la ) = \{ 0 \}. $$
\end{thm*}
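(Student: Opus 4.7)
The plan is to identify the closure $\cM := \mr{int}(\mu, \la)^{-\|\cdot\|_\mu}$ with $\scr{H}^+(\mu_{ac})$---this gives claim~(ii)---and to identify its orthogonal complement in $\scr{H}^+(\mu)$ with $\scr{H}^+(\mu_s)$; claims~(i) and~(iii) will then drop out. The starting point is the kernel additivity that comes directly from the integral formula (\ref{repkernel}): since $\mu = \mu_{ac} + \mu_s$ as measures, $k^\mu = k^{\mu_{ac}} + k^{\mu_s}$ pointwise. Aronszajn's sum theorem (Subsection~\ref{ss-rkhs}) then produces contractive inclusions $\scr{H}^+(\mu_{ac}), \scr{H}^+(\mu_s) \hookrightarrow \scr{H}^+(\mu)$ together with a canonical coisometry $\Phi^* \colon \scr{H}^+(\mu_{ac}) \oplus \scr{H}^+(\mu_s) \to \scr{H}^+(\mu)$, $(f_{ac}, f_s) \mapsto f_{ac} + f_s$, which is unitary---so that (i) holds---precisely when $\scr{H}^+(\mu_{ac}) \cap \scr{H}^+(\mu_s) = \{0\}$.

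For the easy half of (ii), observe that $\mu_{ac} \leq \mu$ gives the contractive embedding $\scr{H}^+(\mu_{ac}) \hookrightarrow \scr{H}^+(\mu)$ (Theorem~\ref{RKdominate}), while $\mu_{ac} \ll \la$ and Theorem~\ref{RKac} give that $\mr{int}(\mu_{ac}, \la)$ is $\|\cdot\|_{\mu_{ac}}$-dense, hence $\|\cdot\|_\mu$-dense, in $\scr{H}^+(\mu_{ac})$. Since $\mr{int}(\mu_{ac}, \la) \subseteq \mr{int}(\mu, \la)$, taking $\|\cdot\|_\mu$-closures yields $\scr{H}^+(\mu_{ac}) \subseteq \cM$. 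The reverse containment $\cM \subseteq \scr{H}^+(\mu_{ac})$ is the main technical step. By Lemma~\ref{invint} and Proposition~\ref{notred}, the $V^\mu$-invariance of $\mr{int}(\mu, \la)$ upgrades $\cM$ to a $V^\mu$-reducing subspace of $\scr{H}^+(\mu)$, so its orthogonal complement $\mathcal{N} := \cM^\perp$ is also $V^\mu$-reducing. Because $\cM \supseteq \mr{int}(\mu, \la) = \scr{H}^+(\mu) \cap \scr{H}^+(\la)$, one has $\mathcal{N} \cap \scr{H}^+(\la) \subseteq \cM \cap \cM^\perp = \{0\}$, so $\mathcal{N}$ is ``singular'' to $\scr{H}^+(\la)$ in the sense of having trivial intersection with it.

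To finish, I plan to leverage the Clark-type correspondence of Subsection~\ref{ss:Clark} to realize the $V^\mu$-reducing subspace $\mathcal{N}$ as the Cauchy-transform space $\scr{H}^+(\mu_{\mathcal{N}})$ of some positive, finite, regular Borel measure $\mu_{\mathcal{N}} \leq \mu$. The vanishing $\mathcal{N} \cap \scr{H}^+(\la) = \{0\}$, together with the contrapositive of Theorem~\ref{RKac}, would then force $\mu_{\mathcal{N}} \perp \la$; uniqueness of the Lebesgue decomposition at the measure level gives $\mu_{\mathcal{N}} \leq \mu_s$, hence $\mathcal{N} \subseteq \scr{H}^+(\mu_s)$, and a symmetric argument applied to the $V^\mu$-reducing subspace $\scr{H}^+(\mu_s)$ yields the reverse inclusion, so $\mathcal{N} = \scr{H}^+(\mu_s)$ and $\cM = \scr{H}^+(\mu_{ac})$. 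From these identifications all three conclusions are immediate: (ii) is the identification itself; (i) is the orthogonal decomposition $\scr{H}^+(\mu) = \cM \oplus \mathcal{N}$, which simultaneously promotes $\Phi^*$ to a unitary and forces $\scr{H}^+(\mu_{ac}) \cap \scr{H}^+(\mu_s) = \{0\}$; and (iii) follows because any $f \in \scr{H}^+(\mu_s) \cap \scr{H}^+(\la)$ lies in $\mr{int}(\mu, \la) \subseteq \cM$ and also in $\mathcal{N} = \cM^\perp$, hence $f = 0$. The main obstacle is the construction of $\mu_{\mathcal{N}}$: reducing subspaces of an abstract isometry need not arise from positive Borel measures bounded by $\mu$, and the invariance hypothesis together with the structural results in Lemma~\ref{invint} and Proposition~\ref{notred} is precisely what puts us in the setting where this correspondence applies.
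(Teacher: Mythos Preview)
Your overall strategy is the same as the paper's: recognize $\cM = \mr{int}(\mu,\la)^{-\mu}$ as $V^\mu$-reducing, invoke the ``reducing subspace $\Rightarrow$ Cauchy-transform space of some $\gamma \leq \mu$'' correspondence, and then pin down $\gamma$ using Theorem~\ref{RKac} and the maximality of $\mu_{ac}$. Two misattributions are harmless but worth noting: the upgrade from $V^\mu$-invariant to $V^\mu$-reducing comes from Lemma~\ref{coinvint} (the intersection space is always $V^\mu$-co-invariant), not from Lemma~\ref{invint} or Proposition~\ref{notred}; and the realization of a reducing subspace as $\scr{H}^+(\gamma)$ for some $\gamma \leq \mu$ is Corollary~\ref{reducemeas} (via Theorem~\ref{CTsubspace}), not the Clark correspondence of Subsection~\ref{ss:Clark}.

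There is, however, a genuine gap in the step ``a symmetric argument applied to the $V^\mu$-reducing subspace $\scr{H}^+(\mu_s)$ yields the reverse inclusion.'' At this point in the argument you do \emph{not} know that $\scr{H}^+(\mu_s)$ sits isometrically in $\scr{H}^+(\mu)$, let alone as a $V^\mu$-reducing subspace; all you have is the contractive containment coming from $\mu_s \leq \mu$. Indeed, isometric inclusion of $\scr{H}^+(\mu_s)$ is equivalent to $\scr{H}^+(\mu_{ac}) \cap \scr{H}^+(\mu_s) = \{0\}$, which is part of what you are trying to prove, so this step is circular as written. The fix is to run the argument on $\cM$ rather than on $\scr{H}^+(\mu_s)$: Corollary~\ref{reducemeas} also gives $\cM = \scr{H}^+(\mu_\cM)$ with $\mu_\cM + \mu_{\mathcal N} = \mu$; since $\mr{int}(\mu,\la) \subseteq \mr{int}(\mu_\cM,\la)$ is dense in $\scr{H}^+(\mu_\cM)$, Theorem~\ref{RKac} gives $\mu_\cM \ll \la$, hence $\mu_\cM \leq \mu_{ac}$ by maximality. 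Combined with your $\mu_{\mathcal N} \leq \mu_s$ and $\mu_\cM + \mu_{\mathcal N} = \mu_{ac} + \mu_s$, one obtains $\mu_\cM = \mu_{ac}$ and $\mu_{\mathcal N} = \mu_s$. This is exactly the route the paper takes (applying Corollary~\ref{reducemeas} directly to $\cM$), and once you make this correction your proof coincides with the paper's.
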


Given two positive, finite and regular Borel measures, $\mu$ and $\la$, on the complex unit circle, $\partial \D$, one can associate to $\mu$ a densely--defined and positive semi-definite sesquilinear or quadratic form in $H^2 (\la )$.  Namely, we define the \emph{form domain}, $\nbdom \fq_\mu \subseteq H^2 (\la)$, as the \emph{disk algebra}, $\nbdom \fq _\mu := A(\D )$, the unital Banach algebra of all uniformly bounded analytic functions in the unit disk which extend continuously to the boundary, equipped with the supremum norm. The disk algebra embeds isometrically into the continuous functions on the circle, $\scr{C} (\partial \D )$ and $A (
\D )$ can be viewed as a dense subspace of $H^2 (\la)$. The quadratic form, $\fq_\mu : \nbdom \fq_\mu \times \nbdom \fq_\mu \rightarrow \C$ is then defined in the obvious way by integration against $\mu$,
\be \fq_\mu (g, h) := \int _{\partial \D} \ov{g(\zeta )} h(\zeta) \mu (d\zeta ), \quad \quad g,h \in A (\D ) = \nbdom \fq _\mu. \ee As described in Section \ref{sec:RKac} and Theorem \ref{formLD}, there is a theory of Lebesgue decomposition of densely--defined and positive semi-definite quadratic forms in a Hilbert space, $\cH$. Namely, given any such form, there is a unique \emph{Simon--Lebesgue form decomposition}, 
$$ \fq= \fq_{ac} + \fq_s, $$ where $0 \leq \fq_{ac}, \fq_s \leq \fq$, $\fq_{ac}$ is \emph{absolutely continuous} in the sense that it is \emph{closeable} and it is maximal in the sense that $\fq_{ac}$ is the largest closeable quadratic form bounded above by $\fq$. The form $\fq_s$ is singular in the sense that the only closeable positive semi-definite form it dominates is the identically $0$ form. Here, a positive semi-definite quadratic form, $\fq$, with dense form domain $\nbdom \fq$ in $\cH$, is \emph{closed}, if $\nbdom \fq$ is a Hilbert space, \emph{i.e.} complete, with respect to the norm induced by the inner product $\fq ( \cdot , \cdot ) + \ip{\cdot}{\cdot}_{\cH}$. A form is then \emph{closeable} if it has a closed extension. See Subsection \ref{ss:forms} for an introduction to the theory of densely--defined and positive semi-definite quadratic forms.

An immediate question is whether the Simon--Lebesgue decomposition of the form, $\fq_\mu$, in $H^2 (\la )$ coincides with the Lebesgue decomposition of $\mu$ with respect to $\la$. Namely, if $\mu = \mu _{ac} + \mu _s$ and $\fq_\mu = \fq_{ac} + \fq_s$, then is it true that $\fq_{ac} = \fq_{\mu _{ac}}$ and $\fq_s = \fq_{\mu _s}$? A complete answer, summarized in the theorem below, is provided in Theorem \ref{rkhsform}, Theorem \ref{genRKLD}, Corollary \ref{RKformLD} and Corollary \ref{LDvsformLD}.

\begin{thm*}
If $\fq_{\mu} = \fq_{ac} + \fq_s$ is the Simon--Lebesgue form decomposition of $\fq_\mu$ in $H^2 (\la)$, then 
$$ \scr{H} ^+ (\mu ) = \scr{H} ^+ (\fq_{ac} ) \oplus \scr{H} ^+ (\fq _s), $$ where 
$$ \scr{H} ^+ (\fq_{ac} ) = \mr{int} (\mu , \la ) ^{-\| \cdot \|_\mu}. $$ If $\mu = \mu _{ac} + \mu _s$ is the Lebesgue decomposition of $\mu$ with respect to $\la$, then
$$ \scr{H} ^+ (\mu ) = \scr{H} ^+ (\mu _{ac} ) + \scr{H} ^+ (\mu _s ), $$ is a complementary space decomposition in the sense of de Branges and Rovnyak, with $\scr{H} ^+ (\mu _{ac} )$, $\scr{H} ^+ (\mu _s)$ contractively contained in $\scr{H} ^+ (\mu )$. Moreover, $\scr{H} ^+ (\mu _{ac} )$ is the largest RKHS, $\cH (k)$, contractively contained in $\scr{H} ^+ (\fq _{ac} ) \subseteq \scr{H} ^+ (\mu )$ so that the closed embedding, $\mr{e} : \cH (k) \cap \scr{H} ^+ (\la ) \subseteq \cH (k) \hookrightarrow \scr{H} ^+ (\la )$, is such that $\tau := \mr{e} \mr{e} ^*$ is Toeplitz for the image, $V^\la$, of $Z^\la$ under Cauchy transform, \emph{i.e.} $V ^{\la *} \tau V ^\la = \tau$.
In particular, the Simon--Lebesgue decomposition of the quadratic form, $\fq_\mu$, in $H^2 (\la )$ coincides with the Lebesgue decomposition of $\mu$ with respect to $\la$ if and only if $\mr{int} (\mu , \la)$ is $V^\mu-$invariant.
\end{thm*}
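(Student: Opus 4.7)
The plan is to assemble the theorem from four ingredients: (1) the general Simon--Lebesgue theory for positive semi-definite forms in a Hilbert space (Theorem \ref{formLD}), (2) the absolute-continuity characterization of Theorem \ref{RKac}, (3) additivity of the integral in the kernel formula (\ref{repkernel}), and (4) a Toeplitz-based maximality principle characterizing which contractively-contained RKHS in $\scr{H}^+(\mu)$ arise from positive measures. The key conceptual point is that the form decomposition in $H^2(\la)$ only ``sees'' closeability, while the measure decomposition ``sees'' the additional shift-intertwining rigidity; the gap between them is measured precisely by $V^\mu$-invariance of $\mr{int}(\mu,\la)$.

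First I would handle the form side. Evaluating $\fq_{ac}$ and $\fq_s$ on vectors in $A(\D) \subseteq H^2(\la)$ that Cauchy-transform to the reproducing kernels $k^\la_z$ produces ``form kernels'' $k^{\fq_{ac}}, k^{\fq_s}$ with $k^\mu = k^{\fq_{ac}} + k^{\fq_s}$, so the Aronszajn sum construction gives a complementary decomposition $\scr{H}^+(\mu) = \scr{H}^+(\fq_{ac}) + \scr{H}^+(\fq_s)$. This upgrades to an orthogonal direct sum because $\fq_s$ is singular in the Simon sense: $\fq_s$ dominates no nonzero closeable form, which translates, via Theorem \ref{RKac}, to $\scr{H}^+(\fq_{ac}) \cap \scr{H}^+(\fq_s) = \{0\}$ and forces orthogonality in the complementation framework. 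The identification $\scr{H}^+(\fq_{ac}) = \mr{int}(\mu,\la)^{-\|\cdot\|_\mu}$ then follows by Cauchy-transforming the form-norm closure of $\nbdom \fq_{ac} = A(\D)$: Theorem \ref{RKac} matches that closure exactly with the $\|\cdot\|_\mu$-closure of elements of $\scr{H}^+(\mu)$ simultaneously lying in $\scr{H}^+(\la)$.

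For the measure side, additivity of (\ref{repkernel}) gives $k^\mu = k^{\mu_{ac}} + k^{\mu_s}$, immediately yielding the complementary contractive sum $\scr{H}^+(\mu) = \scr{H}^+(\mu_{ac}) + \scr{H}^+(\mu_s)$. The maximality assertion is the technical core: if $\cH(k) \subseteq \scr{H}^+(\fq_{ac})$ contractively and $\tau = \mr{e}\mr{e}^*$ satisfies $V^{\la *}\tau V^\la = \tau$, then a Cauchy-transform adjoint computation shows $\tau$ is the positive element associated to integration against a positive Borel measure $\nu$ on $\partial\D$. The containment $\cH(k) \subseteq \scr{H}^+(\fq_{ac})$ forces $\nu \ll \la$ by Theorem \ref{RKac}, while $\cH(k) \subseteq \scr{H}^+(\mu)$ contractively forces $\nu \leq \mu$ by Theorem \ref{RKdominate}; hence $\nu \leq \mu_{ac}$. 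Since $\scr{H}^+(\mu_{ac})$ itself meets the Toeplitz requirement, it is maximal.

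Granting all this, the final iff reduces to $\scr{H}^+(\fq_{ac}) = \scr{H}^+(\mu_{ac})$ $\Leftrightarrow$ $\scr{H}^+(\fq_{ac})$ itself satisfies the Toeplitz condition, which via Cauchy-transform intertwining $\mr{e}_{\mu,\la} V^\mu = V^\la \mr{e}_{\mu,\la}$ on $\mr{int}(\mu,\la)$ is equivalent to $V^\mu$-invariance of $\mr{int}(\mu,\la)$; this last step is Theorem \ref{LDviaRKne} combined with Lemma \ref{invint} and Proposition \ref{notred}. The main obstacle in executing this plan is the Toeplitz-to-measure step of the third paragraph: one must prove that a contractively-contained RKHS whose embedding-product $\mr{e}\mr{e}^*$ is $V^\la$-Toeplitz genuinely arises from a positive Borel measure, rather than being a general de Branges--Rovnyak subspace. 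This is the Aronszajn--de Branges--Rovnyak analogue, in the Cauchy-transform setting, of Brown--Halmos's characterization of Toeplitz operators, and the positivity/measure-representation bookkeeping here is the most delicate part of the argument.
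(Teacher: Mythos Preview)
Your overall architecture is right, but there is a genuine gap in the form-side argument and the order of steps is inverted in a way that matters.

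You try to obtain the orthogonal direct sum $\scr{H}^+(\mu) = \scr{H}^+(\fq_{ac}) \oplus \scr{H}^+(\fq_s)$ \emph{before} the identification $\scr{H}^+(\fq_{ac}) = \mr{int}(\mu,\la)^{-\mu}$, by appealing to Simon-singularity of $\fq_s$ ``via Theorem \ref{RKac}''. That step does not go through. Singularity of $\fq_s$ is a statement about forms on $H^2(\la)$: $\fq_s$ dominates no nonzero closeable form. The intersection $\scr{H}^+(\fq_{ac}) \cap \scr{H}^+(\fq_s)$ lives inside $\scr{H}^+(\mu)$, and a nonzero element there does not obviously produce a nonzero closeable form on $H^2(\la)$ dominated by $\fq_s$. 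Theorem \ref{RKac} is about measures, and neither $\fq_{ac}$ nor $\fq_s$ is yet known to come from a measure, so invoking it here is circular. In the paper, orthogonality is a \emph{consequence} of the identification: once one proves $\scr{H}^+(\fq_{ac}) = \mr{int}(\mu,\la)^{-\mu}$, this is by construction a closed subspace of $\scr{H}^+(\mu)$, so the embedding $\mr{e}_{ac}$ is isometric; then $\mr{e}_{ac}\mr{e}_{ac}^*$ is a projection, and the decomposition of the identity $I_\mu = \mr{e}_{ac}\mr{e}_{ac}^* + \mr{e}_s\mr{e}_s^*$ (Proposition \ref{decompid}) forces $\mr{e}_s\mr{e}_s^*$ to be the complementary projection.

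The identification $\scr{H}^+(\fq_{ac}) = \mr{int}(\mu,\la)^{-\mu}$ itself is also more delicate than your sketch suggests. Saying ``Theorem \ref{RKac} matches that closure exactly'' asserts the conclusion. The paper's argument (Theorem \ref{rkhsform}) compares two closed forms: $\fq_B = \ov{\fq_{ac}}$, and $\fq_D$ where $D = \scr{C}_\la^* \mr{e}\mr{e}^* \scr{C}_\la$ for the closed embedding $\mr{e}: \mr{int}(\mu,\la) \hookrightarrow \scr{H}^+(\la)$. One direction, $\fq_D \leq \fq_B$, comes from maximality of $\fq_{ac}$ among closeable forms below $\fq_\mu$ (using that $\scr{K}_\D$ is a common form-core). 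The reverse direction is the substantive step: one shows $\scr{C}_{ac} \nbdom B \subseteq \mr{int}(\mu,\la)$, then runs a sandwich argument with the composite embedding $\mr{int}(\mu,\la)^{-\mu} \hookrightarrow \scr{H}^+(\fq_{ac}) \hookrightarrow \scr{H}^+(\mu)$ to force both factors to be isometric. This is where the real work lies, and your plan does not yet contain it. Your treatment of the measure side and the final equivalence via $V^\mu$-invariance is, by contrast, on the right track and matches the paper's use of Theorem \ref{CTsubspace} and Theorem \ref{LDviaRKne}.
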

In the above, the spaces of $\fq _{ac}$ and $\fq _s-$Cauchy transforms are defined in an analogous way to the space of $\mu-$Cauchy transforms, see Subsection \ref{ss:measforms}. By Proposition \ref{notred}, the intersection space, $\mr{int} (\mu , \la )$, is not always $V^\mu-$invariant. Example \ref{LebesgueEg} (continued in Example \ref{Lebeg2}) provides a concrete example, where $\mu = m_+$ and $\la = m_-$ are the mutually singular restrictions of normalized Lebesgue measure, $m$, to the upper and lower half-circles, so that the Lebesgue decomposition of $m_+$ with respect to $m_-$ has $m_{+; ac} =0$ but $\mr{int} (m_+ ,m_-) \neq \{ 0 \}$, so that $\fq _{m_+ ; ac} \neq 0$. 

\begin{remark}
 This `reproducing kernel approach' to measure theory on the circle and Lebesgue decomposition of a positive measure with respect to Lebesgue measure was first considered and studied in \cite{JM-ncFatou,JM-ncld}, in a more general and non-commutative context.
\end{remark}

\section{Background} \label{s-back}

\subsection{Function theory in the disk, measure theory on the circle}
\label{ss:Clark}

Classical analytic function theory in the complex unit disk and measure theory on the complex unit circle are fundamentally intertwined. There are bijective correspondences between (i) contractive analytic functions in the disk, (ii) analytic functions in the disk with positive semi-definite real part, \emph{i.e.} \emph{Herglotz functions} and (iii) positive, finite and regular Borel measures on the complex unit circle. Namely, starting with such a positive measure, $\mu$, its \emph{Herglotz--Riesz transform} is the Herglotz function,
$$ H _\mu (z) := \int _{\partial \D} \frac{1+z \ov{\zeta}}{1-z \ov{\zeta}} \mu (d\zeta ) \in \scr{O} (\D ). $$ It is easily verified that $\nbre H_\mu (z) \geq 0$, is a positive harmonic function.  
Applying the inverse \emph{Cayley transform} to any Herglotz function, \emph{i.e.} the M\"obius transformation sending the open right half-plane onto the open unit disk, $\D$, which interchanges the points $1$ and $0$, yields a contractive analytic function, $b_\mu$, in the disk,
$$ b_\mu (z) := \frac{H_\mu (z) -1}{H_\mu (z) +1}, \quad \quad |b _\mu (z) | \leq 1, \ z \in \D. $$ (By the maximum modulus principle, $b_\mu$ is strictly contractive in $\D$ unless it is constant.) Each of these transformations is essentially reversible. Namely, given any contractive analytic function, $b$, the Cayley transform, $H_b := \frac{1 + b}{1-b}$, is a Herglotz function and the Herglotz representation theorem states that if $H$ is any Herglotz function in the disk, then there is a unique finite, positive and regular Borel measure, $\mu$ on the circle, so that 
$$ H (z) = i \nbim H (0) + \int _{\partial \D} \frac{1+z \ov{\zeta}}{1-z \ov{\zeta}} \mu (d\zeta ) = i \nbim H (0) + H_\mu (z), $$ see \cite[Boundary Values, Chapter 3]{Hoff}. To be precise, two Herglotz functions correspond to the same positive measure, $\mu$, if and only if they differ by an imaginary constant. If $H_1, H_2$ are two Herglotz functions so that $H_2 = H_1 + it$ for some $t \in \R$, then their corresponding inverse Cayley transforms obey
$$b _2 = \frac{\ov{z(t)}}{z(t)} \cdot \mf{m} _{z(t)} \circ b_1, \quad \quad \mf{m} _{z(t)} (z) = \frac{z - z(t)}{1-\ov{z(t)} z}, \ z(t) := \frac{t}{2i+t} \in \D,$$ so that $b_2$ is, up to multiplication by the unimodular constant $\frac{\ov{z(t)}}{z(t)}$, a M\"obius transformation, $\mf{m} _{z(t)}$, of $b_1$, where $\mf{m} _{z(t)}$ defines an automorphism of the disk interchanging $0$ with $z(t)$.

If a contractive analytic function, $b$, corresponds, essentially uniquely, to a positive measure, $\mu$, in this way, we write $\mu := \mu _b$, and $\mu _b$ is called the \emph{Clark} or \emph{Aleksandrov--Clark measure} of $b$ \cite{Clark}. Many properties of contractive analytic functions in the disk can be described in terms of corresponding properties of their Clark measures and vice versa \cite{Aleks1,Aleks2}. For example, by Fatou's theorem, the Radon--Nikodym derivative of any Clark measure, $\mu _b$, with respect to normalized Lebesgue measure, $m$, on the circle is given by the radial, or more generally non-tangential, limits of the real part of its Herglotz function,
\ba \frac{\mu _b (d\zeta )}{m(d\zeta )} & = & \lim _{r \uparrow 1} \nbre H_b (r\zeta ); \quad \quad m-a.e., \ \zeta \in \partial \D  \\
& = & \lim _{r \uparrow 1} \frac{ 1 - | b (r \zeta ) | ^2 }{| 1 - b (r \zeta )| ^2} \geq 0, \ea
\cite{Fatou}, \cite[Fatou's Theorem, Chapter 3]{Hoff}.
As a corollary of this formula, we see that $b$ is \emph{inner}, \emph{i.e.} it has unimodular radial boundary limits $m-$a.e. on the circle, if and only if its Radon--Nikodym derivative vanishes almost everywhere, \emph{i.e.} if and and only if its Clark measure is singular with respect to Lebesgue measure. 

As a second example which will be relevant for our investigations here, $b$ is an extreme point of the closed convex set of contractive analytic functions in the disk if and only if its Radon--Nikodym derivative with respect to Lebesgue measure is not log-integrable.  That is, $b$ is an extreme point if and only if 
$$ \mr{log} \, \frac{\mu _b (d\zeta )}{m (d\zeta)} \ \notin \ L^1 = L^1 (m). $$ This follows from the characterization of extreme points in the set of contractive analytic functions given in \cite[Extreme Points, Chapter 9]{Hoff} and Fatou's Radon--Nikodym formula as described above. Here, equipping the set of all bounded analytic functions in the disk with the supremum norm, we obtain the unital Banach algebra, $H^\infty$, the Hardy algebra, whose closed unit ball, $[H^\infty ] _1$, is the compact and convex set of contractive analytic functions in the disk. It further follows from a well-known theorem of Szeg\"o (later strengthened by Kolmogoroff and Kre\u{\i}n), that $H^2 (\mu ) = L^2 (\mu )$ if and only if $\mu = \mu _b$ for an extreme point $b \in [ H^\infty ] _1$ \cite[Szeg\"o's Theorem, Chapter 4]{Hoff}, \cite{Szego-thm}. Namely, Szeg\"o's theorem gives a formula for the distance from the constant function $1$ to the closure of the analytic polynomials with zero constant term in $L^2 (\mu )$:
$$ \mr{inf}_{\substack{p \in \C  [\zeta ] \\ p(0) = 0}} \| 1 -p \| ^2 _{L^2 (\mu )} = \mr{exp} \, \int _{\partial \D} \mr{log} \, \frac{\mu (d\zeta )}{m (d\zeta)} \, m(d\zeta). $$ It follows, in particular, that $b$ is an extreme point so that $\frac{d\mu}{dm}$ is not log-integrable if and only if $1$ belongs to the closure, $H^2 _0 (\mu )$, in $L^2 (\mu)$ of the analytic polynomials obeying $p(0) =0$. That is, if and only if $H^2 _0 (\mu ) = H^2 (\mu )$. An inductive argument then shows that this is equivalent to $H^2 (\mu ) = L^2 (\mu )$, so that $Z^\mu = M^\mu _\zeta | _{H^2 (\mu)} = M^\mu _\zeta$ is unitary. If $\mu = \mu _b$ is the Clark measure of an extreme point, $b$, we will say that $\mu$ is \emph{extreme}, and that $\mu$ is \emph{non-extreme} if $b$ is not an extreme point.

The results of this paper reinforce the close relationship between function theory in the disk and measure theory on the circle by establishing the Lebesgue decomposition and Radon--Nikodym theorem for positive measures using functional analysis and reproducing kernel theory applied to spaces of Cauchy transforms of positive measures. We will see that the reproducing kernel construction of the Lebesgue decomposition of a positive measure $\mu$, with respect to another, $\la$, bifurcates into the two cases, where: the intersection of the spaces of $\mu$ and $\la-$Cauchy transforms, is (i) invariant, or (ii), not invariant for the image of $Z ^\mu$ under Cauchy transform. Moreover, whether or not this intersection space is invariant is largely dependent on whether $\la$, or $\mu + \la$ are non-extreme or extreme. 

\subsection{Reproducing kernel Hilbert spaces}

\label{ss-rkhs}

As described in the introduction, a reproducing kernel Hilbert space (RKHS) is any complex, separable Hilbert space of functions, $\cH$, on a set $X$, with the property that the linear functional of point evaluation at any $x \in X$ is bounded on $\cH$. Further recall, as described above, that for any $x \in X$, there is then a unique \emph{kernel vector} or \emph{point evaluation vector}, $k_x \in \cH$ so that $\ip{k_x}{h}_\cH = h(x)$ for any $h \in \cH$ and we write $\cH = \cH (k)$, where $k : X \times X \rightarrow \C$ is a \emph{positive kernel function} on $X$ in the sense of Equation (\ref{poskernel}). Much of elementary reproducing kernel Hilbert space theory was developed by N. Aronszajn in his seminal paper, \cite{Aron-rkhs}. In particular, there is a bijective correspondence between RKHS on a set $X$ and positive kernel functions on $X$ given by the Aronszajn--Moore theorem, \cite[Part I]{Aron-rkhs}, \cite[Proposition 2.13, Theorem 2.14]{Paulsen-rkhs} and this motivates the notation $\cH = \cH (k)$. 

\begin{thm*}[Aronszajn--Moore]
If $\cH = \cH (k)$ is a RKHS of functions on a set, $X$, then $k$ is a positive kernel function on $X$. Conversely, if $k$ is a positive kernel function on $X$, then there is a (necessarily unique) RKHS of functions on $X$ with reproducing kernel, $k$.
\end{thm*}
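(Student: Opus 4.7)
The plan is standard for this kind of representer/Moore construction. I will handle the two directions separately.

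For the forward implication, assume $\cH = \cH(k)$ is a RKHS with kernel vectors $\{k_x\}_{x \in X}$ and $k(x,y) = \ip{k_x}{k_y}_\cH$. Given any finite set $\{x_1,\dots,x_n\} \subseteq X$ and any $(c_1,\dots,c_n) \in \C^n$, I simply expand
\[ \sum_{i,j=1}^n \ov{c_i} \, k(x_i,x_j) \, c_j \;=\; \sum_{i,j} \ov{c_i} c_j \ip{k_{x_i}}{k_{x_j}}_\cH \;=\; \Big\| \sum_{j=1}^n c_j k_{x_j} \Big\| ^2 _\cH \;\geq\; 0, \]
which is exactly the positivity condition \eqref{poskernel}.

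For the converse, I would construct $\cH(k)$ by hand. First, form the vector space $\cH_0$ of formal finite linear combinations of symbols $\{K_x\}_{x \in X}$, or equivalently the span in $\C^X$ of the functions $K_x(\cdot) := k(\cdot, x)$, and define a sesquilinear form by
\[ \Big\langle \sum_i a_i K_{x_i} ,\, \sum_j b_j K_{y_j} \Big\rangle \;:=\; \sum_{i,j} \ov{a_i} b_j \, k(x_i, y_j). \]
The positivity hypothesis on $k$ makes this form positive semi-definite, and I would check it is well-defined (independent of the representation) by noting that $\langle K_x, \sum_j b_j K_{y_j}\rangle = \sum_j b_j k(x, y_j)$ equals the value of the function $\sum_j b_j K_{y_j}$ at $x$, so the form only depends on the function, not on the formal expression. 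The Cauchy--Schwarz inequality for semi-inner products then gives $|f(x)|^2 = |\ip{K_x}{f}|^2 \leq k(x,x) \| f\| ^2$, so vectors of zero norm evaluate to $0$ everywhere and the form is genuinely positive definite on $\cH_0$ viewed as functions on $X$.

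Next I would complete $\cH_0$ in this norm to a Hilbert space $\cH$ and identify abstract elements of $\cH$ with honest functions on $X$: for any Cauchy sequence $(f_n) \subseteq \cH_0$, the pointwise Cauchy--Schwarz bound above shows that $f_n(x) \to f(x)$ pointwise to a well-defined function $f$, and two Cauchy sequences representing the same element of the completion give the same pointwise limit. The embedding of $\cH$ into $\C^X$ is injective (again by the pointwise bound applied in the limit), and point evaluation at $x$ is bounded on the completion with Riesz representer equal to the pointwise limit of $K_x$, namely $K_x$ itself. Therefore $\cH$ is a RKHS on $X$ whose reproducing kernel at $(x,y)$ is $\ip{K_x}{K_y} = k(x,y)$, as required. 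For uniqueness, if $\cH_1, \cH_2$ are two RKHS on $X$ with the same kernel $k$, both must contain the span of $\{k_x\}$ with the same inner product, and this span is dense in each (its orthogonal complement in a RKHS is always trivial since $\ip{k_x}{f} = f(x)$); completing in a common norm forces $\cH_1 = \cH_2$.

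The main obstacle is the bookkeeping around the completion step: ensuring the abstract Hilbert completion of $\cH_0$ can be canonically realized as a space of genuine functions on $X$ (not equivalence classes of Cauchy sequences) and that the kernel vectors of the completion are still the original $K_x$. Everything else, including uniqueness, is algebraic manipulation once the pointwise bound $|f(x)| \leq \sqrt{k(x,x)} \|f\|$ is in hand.
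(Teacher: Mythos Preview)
Your argument is correct and is exactly the standard Moore construction; the paper itself does not prove this theorem but merely states it as background and cites \cite[Part I]{Aron-rkhs} and \cite[Proposition 2.13, Theorem 2.14]{Paulsen-rkhs}. There is nothing to compare against, and your handling of the completion step (realizing the abstract completion as a space of genuine functions via the pointwise bound $|f(x)| \leq \sqrt{k(x,x)}\,\|f\|$) is the right way to dispatch the one nontrivial point.
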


Any RKHS, $\cH (k)$, of functions on a set $X$, is naturally equipped with a \emph{multiplier algebra}, $\mr{Mult} (k)$, the unital algebra of all functions on $X$ which `multiply' $\cH (k)$ into itself. That is, $g \in \mr{Mult} (k)$ if and only if $g\cdot h \in \cH (k)$ for any $h \in \cH (k)$. Any $h \in \mr{Mult} (k)$ can be identified with the linear multiplication operator $M_h : \cH (k) \rightarrow \cH (k)$. More generally, one can consider the set of multipliers, $\mr{Mult} (k, K)$, between two RKHS on the same set. If $h \in \mr{Mult} (k, K)$, then $M_h$ is always bounded, by the closed graph theorem. Adjoints of multiplication operators have a natural action on kernel vectors: If $h \in \mr{Mult} (k , K)$, then 
$$ M_h ^* K_z = k_z \ov{h(z)}. $$ 

All RKHS in this paper will be RKHS, $\cH (k)$, of analytic functions in the complex unit disk, $\D = (\C ) _1$, with the additional property that evaluation of the Taylor coefficients of any $h \in \cH (k)$ (at $0$) defines a bounded linear functional on $\cH (k)$. Again, by the Riesz representation lemma, for any $j \in \N \cup \{ 0 \}$, there is then a unique \emph{Taylor coefficient kernel vector}, $k_j \in \cH (k)$, so that if $h \in \cH (k)$ has Taylor series at $0$,
$$ h(z) = \sum _{j=0} ^\infty \hat{h} _j z^j, $$ then $\ip{k_j}{h}_{\cH (k)} = \hat{h} _j$. It follows that 
$$ \hat{k} (i , j) := \ip{k_i}{k_j}_{\cH (k)}, $$ defines a positive kernel function, the \emph{coefficient reproducing kernel} of $\cH (k)$, on the set $\N \cup \{ 0 \}$. It is easily checked that for any such Taylor coefficient reproducing kernel Hilbert spaces, $\cH (k)$ and $\cH (K)$, of analytic functions in $\D$,
$$ k \leq K \quad \Leftrightarrow \quad \hat{k} \leq \hat{K}. $$
The reproducing and coefficient reproducing kernels of a Taylor coefficient RKHS in $\D$ are related by the formulas:
$$ k(z,w) = \sum _{j,\ell =0} ^\infty \hat{k} (j , \ell) z^j \ov{w} ^\ell, \quad \mbox{and} \quad k_z = \sum _{j=0} ^\infty \ov{z} ^j k_j. $$
Adjoints of multipliers also have a natural convolution action on coefficient kernels, if $h \in \mr{Mult} (K , k)$, then, 
\be M_h ^* K_\ell = \sum _{i+j = \ell} k_i \ov{\hat{h} _j}. \label{convo} \ee We will say that a RKHS, $\cH (k)$, of analytic functions in $X = \D$ is a \emph{coefficient RKHS in} $\D$, if Taylor coefficient evaluations define bounded linear functionals on $\cH (k)$. In this case the positive coefficient kernel function $\hat{k}$ on $\N \cup \{ 0 \}$ is an example of a discrete or formal reproducing kernel in the sense of \cite{BV-formalRKHS}.

In this paper it will be useful to consider densely--defined multipliers between RKHS $\cH (K) , \cH (k)$ on $X$, which are not necessarily bounded. 
\begin{prop}[Multipliers are closeable] \label{closedmult}
Let $k, K$ be positive kernel functions on $X$, and let $h$ be a function on $X$ so that the linear operator $M_h : \nbdom M_h \subseteq \cH (k) \rightarrow \cH (K)$ has dense domain, $\nbdom M_h$. Then $M_h$ is closeable, and closed on its maximal domain, $\mr{Dom} _{\mr{max}} \, M_h := \{ g \in \cH (k) | \ h \cdot g \in \cH (K) \}$, $M_h ^* K_x = k_x \ov{h(x)}$, and $\bigvee _{x \in X} K_x$ is a core for $M_h ^*$, if $M_h$ is defined on its maximal domain.
\end{prop}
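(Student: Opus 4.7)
The argument rests on two basic facts about any RKHS: the span $\bigvee_{x \in X} K_x$ is norm-dense in $\cH (K)$ (anything orthogonal to every $K_x$ must vanish at every $x$), and norm convergence implies pointwise convergence via $f_n(x) = \ip{K_x}{f_n}_{\cH (K)}$. These two ingredients will handle all four assertions.

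The first move is the standard reproducing kernel computation: for every $x \in X$ and every $g \in \nbdom M_h$,
$$ \ip{K_x}{M_h g}_{\cH (K)} = (hg)(x) = h(x) g(x) = \ip{\ov{h(x)} k_x}{g}_{\cH (k)}, $$
so $K_x \in \nbdom M_h^*$ and $M_h^* K_x = \ov{h(x)} k_x$. Since the dense subspace $\bigvee_{x \in X} K_x$ sits inside $\nbdom M_h^*$, the adjoint is densely defined, which gives closeability of $M_h$ by the standard criterion.

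To show $M_h$ is closed on $\mr{Dom}_{\mr{max}} M_h$, I would take a graph-convergent sequence: $g_n \in \mr{Dom}_{\mr{max}} M_h$ with $g_n \to g$ in $\cH (k)$ and $h g_n \to f$ in $\cH (K)$. Pointwise evaluation at any $x$ gives $g_n(x) \to g(x)$ and $h(x) g_n(x) = (h g_n)(x) \to f(x)$, forcing $f(x) = h(x) g(x)$ for every $x \in X$. Hence $h \cdot g = f \in \cH (K)$, so $g \in \mr{Dom}_{\mr{max}} M_h$ with $M_h g = f$, which is closedness of the graph.

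For the final assertion, assume $M_h$ is defined on $\mr{Dom}_{\mr{max}} M_h$ and set $T := M_h^*|_{\bigvee_{x \in X} K_x}$. Since $T \subseteq M_h^*$ is a restriction of a closed operator, $T$ is closeable, and the core condition $\ov{T} = M_h^*$ reduces, via $\ov{T} = T^{**}$, to the identity $T^* = M_h$. Testing the defining relation for $T^*$ against the generating set $\{K_x\}_{x \in X}$ yields: $g \in \nbdom T^*$ with $T^* g = f$ if and only if $\ip{\ov{h(x)} k_x}{g}_{\cH (k)} = \ip{K_x}{f}_{\cH (K)}$ for every $x \in X$, i.e., $h(x) g(x) = f(x)$ for all $x$, which is precisely $h \cdot g = f \in \cH (K)$ with $M_h g = f$. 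The only real subtlety in the whole argument is keeping the three domains $\nbdom M_h$, $\mr{Dom}_{\mr{max}} M_h$, and $\nbdom M_h^*$ straight; everything else follows once point evaluation is recognized as the bridge between norm-convergence and the algebraic membership $h \cdot g \in \cH (K)$.
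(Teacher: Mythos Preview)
Your proof is correct and follows essentially the same route as the paper's: both arguments use pointwise convergence in an RKHS to verify closedness on the maximal domain, and both identify the core property by showing that the adjoint of the restriction $M_h^*|_{\bigvee K_x}$ recovers $M_h$ on its maximal domain (the paper phrases this as ``$T$ acts as multiplication by $h$, hence equals $M_h$ by maximality,'' while you compute $T^*$ directly, but the content is the same). The only organizational difference is that you establish closeability first via the densely--defined adjoint and closedness second, whereas the paper does it in the reverse order.
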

Recall that a linear operator with dense domain in a Hilbert space, $\cH$, is said to be \emph{closed} if its graph is a closed subspace of $\cH \oplus \cH$. Further recall that a dense set, $\scr{D} \subseteq \nbdom A$, contained in the domain of closed operator, $A$, is called a \emph{core} for $A$ if $A$ is equal to the \emph{closure} (minimal closed extension) of its restriction to $\scr{D}$. In general, given any two linear transformations $A, B$, we say that $B$ is an \emph{extension} of $A$ or that $A$ is a restriction of $B$, written $A \subseteq B$, if $\nbdom A \subseteq \nbdom B$ and $B | _{\nbdom A} = A$. Equivalently, the set of all pairs $(x, Ax)$, for $x \in \scr{D}$, is dense in the graph of $A$. Finally, $A$ is \emph{closeable} if it has a closed extension.
\begin{proof}
Define $\mr{Dom} _{\mr{max}} \, M_h$ to be the linear space of all $g \in \cH (k)$ so that $h \cdot g \in \cH (K)$. This is the largest domain on which $M_h$ makes sense. If $g_n \in \mr{Dom} _{\mr{max}} \, M_h$ is such that $g_n \rightarrow g$ and $M_h g_n \rightarrow f$, then since $\cH (k) , \cH (K)$ are RKHS, it necessarily follows that 
$$ g_n (x) \rightarrow g (x), \quad \mbox{and} \quad h(x) g_n (x) \rightarrow h(x) g(x) = f(x), \ x \in X. $$ This proves that $f = h \cdot g$, so that $g \in \mr{Dom} _{\mr{max}} \, M_h$ and $M_h$ is closed on $\mr{Dom} _{\mr{max}} \, M_h$. If $M_h$ is densely-defined on some other domain, $\nbdom M_h$, then $\nbdom M_h \subseteq \mr{Dom} _{\mr{max}} \, M_h$ by maximality, so that $M_h$ has a closed extension, and is hence closeable.

The fact that $\bigvee K_x$ is a core for $M_h ^*$ follows from the assumption that $M_h$ is defined (and closed) on its maximal domain. By maximality, $M_h$, with domain $\mr{Dom} _{\mr{max}} \, M_h$, has no non-trivial closed extensions which act as multiplication by $h$. Let $T_*$ be the closure of the restriction of $M_h ^*$ to $\bigvee K_x$. Then $T _* \subseteq M_h ^*$ is densely--defined and closed so that $M_h \subseteq T:=T_* ^*$, where $T_* ^*$, the adjoint of $T_*$ is necessarily closed so that $T ^* = T_*$. However,
$$ T^*  K_x = M_h ^* K_x = K_x \ov{h(x)}, $$ so that $T$ necessarily acts as multiplication by $h$ on its domain. By maximality, $\nbdom T = \nbdom _{max} M_h$ and $M_h =T$.
\end{proof}

\begin{remark} \label{coeffcore}
If $\cH (k)$ and $\cH (K)$ are Taylor coefficient RKHS in $\D$, then one can further show that the adjoint of any closed multiplication operator, $M_h : \cH (k) \rightarrow \cH (K)$ acts as a convolution operator on coefficient kernels, as in Equation (\ref{convo}), and the linear span of all Taylor coefficient kernels is also a core for $M_h ^*$.
\end{remark}

One can define a natural partial order on positive kernel functions on a fixed set, $X$. Namely, if $k$ and $K$ are two positive kernel functions on the same set, $X$, we write $k \leq K$, if $K-k$ is a positive kernel function on $X$. Notice that the identically zero kernel function is a positive kernel on $X$, so that $k \leq K$ can be equivalently written as $K- k \geq 0$. The following theorem of Aronszajn describes when one RKHS of functions on $X$ is boundedly contained in another in terms of this partial order, \cite[Section 7]{Aron-rkhs} \cite[Theorem 5.1]{Paulsen-rkhs}.

\begin{thm*}[Aronszajn's inclusion theorem]
Let $k, K$ be positive kernel functions on a set, $X$. Then $\cH (k) \subseteq \cH (K)$ and the norm of the embedding $\mr{e} : \cH (k ) \hookrightarrow \cH (K)$ is at most $t^2 >0$ if and only if $t^2 K \geq k$.
\end{thm*}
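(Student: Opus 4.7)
The proof splits naturally into two implications, both handled by the adjoint trick on kernel vectors. Let me sketch the plan.

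\textbf{Easy direction: bounded containment implies kernel domination.} Suppose $\cH(k) \subseteq \cH(K)$ and the embedding $\mr{e} : \cH(k) \hookrightarrow \cH(K)$ has $\| \mr{e} \| \leq t$. The first step is to identify $\mr{e}^* K_x = k_x$ for every $x \in X$: for any $h \in \cH(k)$,
$$ \ip{\mr{e}^* K_x}{h}_{\cH(k)} = \ip{K_x}{\mr{e} h}_{\cH(K)} = (\mr{e} h)(x) = h(x) = \ip{k_x}{h}_{\cH(k)}. $$
The norm bound $\| \mr{e}^* \| = \| \mr{e} \| \leq t$ then translates, for any finite collection $\{x_1,\dots,x_n\} \subseteq X$ and scalars $c_1,\dots,c_n \in \C$, to
$$ \sum _{i,j} \ov{c_i} c_j k(x_i,x_j) = \Big\| \mr{e}^* \sum _i c_i K_{x_i} \Big\| _{\cH(k)}^2 \leq t^2 \Big\| \sum _i c_i K_{x_i} \Big\| _{\cH(K)}^2 = t^2 \sum _{i,j} \ov{c_i} c_j K(x_i,x_j), $$
which is exactly the statement $t^2 K - k \geq 0$ as a positive kernel.

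\textbf{Hard direction: kernel domination implies bounded embedding.} Assume $t^2 K - k \geq 0$. The plan is to construct a contraction $T : \cH(K) \to \cH(k)$ whose adjoint \emph{is} the inclusion map. Define $T$ first on the dense linear span $\scr{D} := \bigvee _{x \in X} K_x \subseteq \cH(K)$ by the rule $T K_x := k_x$. The key computation is that the domination hypothesis says precisely
$$ \Big\| T \sum _i c_i K_{x_i} \Big\| _{\cH(k)}^2 = \sum _{i,j} \ov{c_i}c_j k(x_i, x_j) \leq t^2 \sum _{i,j} \ov{c_i} c_j K(x_i,x_j) = t^2 \Big\| \sum _i c_i K_{x_i} \Big\| _{\cH(K)}^2, $$
which shows simultaneously that $T$ is well-defined on $\scr{D}$ (the left side vanishes whenever the right side does) and bounded by $t$. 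Extend $T$ by continuity to all of $\cH(K)$.

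\textbf{Identifying $T^*$ as the embedding.} The final step is to show that for any $h \in \cH(k)$, the function $T^* h$ coincides with $h$ pointwise, and hence $h \in \cH(K)$. This is the one calculation that requires the reproducing property of both spaces: for $x \in X$,
$$ (T^* h)(x) = \ip{K_x}{T^* h}_{\cH(K)} = \ip{T K_x}{h}_{\cH(k)} = \ip{k_x}{h}_{\cH(k)} = h(x). $$
Therefore $T^* : \cH(k) \to \cH(K)$ is nothing but the inclusion map, and $\| \mr{e} \| = \| T^* \| = \| T \| \leq t$.

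\textbf{Main obstacle.} The only subtle point is the well-definedness of $T$ on $\scr{D}$: a priori the representation $\sum c_i K_{x_i}$ of a vector in $\scr{D}$ is not unique, so one must verify that $T$ does not depend on this representation. The inequality above handles this automatically, since it shows the map is defined on equivalence classes modulo the kernel of the evaluation map $(c_i) \mapsto \sum c_i K_{x_i}$. Everything else is a direct reproducing-kernel calculation.
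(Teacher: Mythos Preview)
Your proof is correct and is the standard argument for Aronszajn's inclusion theorem. Note, however, that the paper does not actually prove this result: it is stated in the background subsection on reproducing kernel Hilbert spaces as a named theorem with citations to \cite[Section 7]{Aron-rkhs} and \cite[Theorem 5.1]{Paulsen-rkhs}, but no proof is given. So there is nothing to compare against.

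One minor remark: the theorem as stated in the paper has a typo --- it says the norm of the embedding is at most $t^2$, whereas the correct bound (which you prove, and which the paper itself uses elsewhere, e.g.\ in the introduction) is that $\| \mr{e} \| \leq t$ if and only if $t^2 K \geq k$. Your argument establishes the correct version.
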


If $k$ and $K$ are both positive kernel functions on a set, $X$, it is immediate that $k+K$ is also a positive kernel function on $X$. The following `sums of kernels' theorem of Aronszajn describes the norm of $\cH (k +K )$ and the decomposition of this space in terms of $\cH (k)$ and $\cH (K)$ \cite{Aron-rkhs}, \cite[Theorem 5.4, Corollary 5.5]{Paulsen-rkhs}. Notice, in particular that $k , K \leq k + K$ as kernel functions so that $\cH (k)$ and $\cH (K)$ are contractively contained in $\cH (k + K )$, by the inclusion theorem.

\begin{thm*}[Aronszajn's sums of kernels theorem]
Let $k, K$ be positive kernel functions on a set, $X$. Then, $\cH (k+K) = \cH (k) + \cH (K)$ and 
$$ \| h \| ^2 _{\cH (k+K)} = \min \, \{ \| f \| ^2 _{\cH (k)} + \| g \| ^2 _{\cH (K)} | \ f + g = h \}. $$ 
In particular, $\cH (k+ K) = \cH (k) \oplus \cH (K)$ if and only if $\cH (k) \cap \cH (K) = \{ 0 \}$.
\end{thm*}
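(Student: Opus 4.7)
The plan is to use the standard ``pull-back'' construction via the external orthogonal direct sum. Form the Hilbert space $\cK := \cH(k) \oplus \cH(K)$ and define the ``sum'' map
$$ \Sigma : \cK \to \C^X, \qquad \Sigma(f,g) := f + g \ \text{(as a function on } X\text{)}. $$
First I would verify that $\cM := \ker \Sigma = \{ (f, -f) : f \in \cH(k) \cap \cH(K) \}$ is a closed subspace of $\cK$: since norm convergence in $\cK$ forces pointwise convergence of both coordinates, any limit of $(f_n, -f_n)$ must be of the form $(f, -f)$ with $f \in \cH(k) \cap \cH(K)$. Then $\Sigma|_{\cM^\perp}$ is an isometric isomorphism of $\cM^\perp$ onto its range $\cH(k) + \cH(K) \subseteq \C^X$ when the latter is equipped with the quotient norm
$$ \|h\|_{\cH}^2 := \inf \bigl\{ \|f\|^2_{\cH(k)} + \|g\|^2_{\cH(K)} \,:\, f+g = h \bigr\}, $$
and the infimum is attained at the unique pair in $\cM^\perp$ (the orthogonal projection of any representative onto $\cM^\perp$).

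Next I would identify the reproducing kernel of $\cH := (\cH(k) + \cH(K), \|\cdot\|_\cH)$. The key observation is that the natural candidate for the kernel vector, namely $(k_x, K_x) \in \cK$, actually lies in $\cM^\perp$: for any $\phi \in \cH(k) \cap \cH(K)$,
$$ \ip{(k_x,K_x)}{(\phi,-\phi)}_{\cK} = \ip{k_x}{\phi}_{\cH(k)} - \ip{K_x}{\phi}_{\cH(K)} = \phi(x) - \phi(x) = 0. $$
Consequently $\Sigma(k_x,K_x) = k_x + K_x$ is the representative of the kernel at $x$, with squared norm $k(x,x) + K(x,x) = (k+K)(x,x)$. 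For any $h = \Sigma(f,g)$ with $(f,g) \in \cM^\perp$, the reproducing property
$$ \ip{k_x + K_x}{h}_{\cH} = \ip{(k_x,K_x)}{(f,g)}_{\cK} = f(x) + g(x) = h(x) $$
shows that $\cH$ is a RKHS with reproducing kernel $k + K$. By the uniqueness clause of the Aronszajn--Moore theorem, $\cH = \cH(k+K)$, which establishes both the set equality and the minimum-norm formula.

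Finally, for the ``in particular'' statement: if $\cH(k) \cap \cH(K) = \{0\}$ then $\cM = \{(0,0)\}$, so $\Sigma$ is a Hilbert space isomorphism from all of $\cK$ onto $\cH(k+K)$. The representation $h = f + g$ is then unique, the infimum collapses to $\|f\|^2_{\cH(k)} + \|g\|^2_{\cH(K)}$, and the sum $\cH(k) + \cH(K)$ is orthogonal inside $\cH(k+K)$. Conversely, if there exists $0 \neq \phi \in \cH(k) \cap \cH(K)$ then $(\phi,-\phi) \in \cM$ is nonzero, so $\phi = \phi + 0 = 0 + \phi$ admits two distinct representations and the two contractive inclusions of $\cH(k)$ and $\cH(K)$ into $\cH(k+K)$ cannot be orthogonal.

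The main obstacle I anticipate is the verification that $(k_x,K_x) \perp \cM$, since this is exactly the place where the reproducing properties of both factor spaces must conspire with the definition of $\cM$. Everything else (closedness of $\cM$, the quotient being a Hilbert space of functions, attainment of the minimum) is routine Hilbert-space bookkeeping once this orthogonality is in place.
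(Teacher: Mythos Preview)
The paper does not actually give its own proof of this theorem: it is stated as a background result and attributed to \cite{Aron-rkhs} and \cite[Theorem 5.4, Corollary 5.5]{Paulsen-rkhs}. Your argument is correct and is essentially the standard proof one finds in those references. In fact, your construction is precisely what underlies the ``Sums and intersections of RKHS'' theorem quoted (also without proof) immediately afterward in the paper: your map $\Sigma|_{\cM^\perp}$ is the inverse of the isometry $U_\vee$ defined there by $U_\vee (k+K)_x = k_x \oplus K_x$, and your kernel $\cM$ is exactly the range of $U_\wedge$. So rather than differing from the paper's approach, your proof recovers the machinery the paper is importing.

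One small comment on the converse in your ``in particular'' paragraph: saying that $\phi$ admits two distinct representations is enough to show the sum is not a \emph{direct} sum (hence not an orthogonal direct sum), but you could state this more crisply by noting that $0 = \phi + (-\phi)$ is a nontrivial decomposition of the zero vector, so the algebraic sum $\cH(k) + \cH(K)$ is not direct.
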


Observe that the sums of kernels theorem asserts that the algebraic sum $\cH (k+K) = \cH (k) + \cH (K)$ is a direct sum if and only if it is an orthogonal direct sum. More can be said about this decomposition and the structure of $\cH (k+K)$ using the theory of operator--range spaces of contractions and their complementary spaces in the sense of de Branges and Rovnyak \cite{dBR-ss}, \cite[Chapter 16]{FMHb}. Let $A \in \scr{L} (\cH , \cJ )$ be a bounded linear operator. The \emph{operator--range space of $A$}, $\scr{R} (A)$, is the Hilbert space obtained by equipping the range of $A$ with the inner product that makes $A$ a co-isometry onto its range. That is, $\scr{R} (A) = \nbran A \subseteq \cJ$, with inner product,
$$ \ip{Ax}{Ay}_A := \ip{x}{P_{\nbker A} ^\perp y}_{\cH}. $$
One can generally show that $\scr{R} (A) = \scr{R} ( \sqrt{A A^*} )$, \cite[Corollary 16.8]{FMHb}. If $A$ is a contraction, $\| A \| \leq 1$, then $\scr{R} (A) \subseteq \cJ$ is contractively contained in $\cJ$ in the sense that the embedding, $\mr{e} : \scr{R} (A) \hookrightarrow \cJ$ is a linear contraction. In this case, one can define the \emph{complementary space of $A$}, $\scr{R} ^c (A) := \scr{R} (\sqrt{I-AA^*} )$. The notion of complementary space was originally introduced in a more geometric way by de Branges and Rovnyak \cite{dBR-ss}. Namely, if $\cH$ is any Hilbert space and $\scr{R} \subseteq \cH$ is a Hilbert space which is contractively contained in $\cH$, then $\scr{R} = \scr{R} (\mr{j} )$, where $\mr{j} : \scr{R} \hookrightarrow \cH$ is the contractive embedding. L. de Branges and J. Rovnyak defined the complementary space, $\scr{R} ^c$ of $\scr{R}$ as the set of all $y \in \cH$ so that 
$$ \sup _{x \in \scr{R}} \left( \| y + x \| _{\cH} ^2 - \| x \| ^2 _{\scr{R}} \right) < + \infty. $$ One can prove that $\scr{R} ^c = \scr{R} ^c (\mr{j} )$ and that the above formula is equal to the norm of $y$ in $\scr{R} ^c (\mr{j})$, so that these two definitions coincide \cite[Chapter 16]{FMHb}. The following theorem summarizes several results in the theory of operator--range spaces, see \cite[Chapter 16]{FMHb}. 
\begin{thm}[Operator--range spaces of contractions] \label{oprancomp}
Let $A \in \scr{L} ( \cH , \cJ )$ be a contraction. If $\mr{e} : \scr{R} (A) \hookrightarrow \cJ$ and $\mr{j} : \scr{R} ^c (A) \hookrightarrow \cJ$ are the contractive embeddings, then
$$ \cJ = \scr{R} (A) + \scr{R} ^c (A). $$ For any $x = y + z \in \cJ$ so that $y \in \scr{R} (A)$ and $z \in \scr{R} ^c (A)$, the Pythagorean equality,
\be \| x \| ^2 _{\cJ} = \| y \| ^2 _{\scr{R} (A)} + \| z \| ^2 _{\scr{R} ^c (A)}, \label{compdecomp} \ee holds if and only if $y = \mr{e} \mr{e} ^* x$ and $z= \mr{j} \mr{j} ^*x$, so that, in particular, $I _\cJ = \mr{e} \mr{e} ^* + \mr{j} \mr{j} ^*$. As a vector space, the overlapping space is 
$$ \scr{R} (A) \cap \scr{R} ^c (A) = A \scr{R} ^c (A^*), $$ and $A : \scr{R} ^c (A^*) \rightarrow \scr{R} ^c (A)$ acts as a linear contraction. \\

Moreover, the following are equivalent:
\bi
    \item[\emph{(i)}] $A$ is a partial isometry,
    \item[\emph{(ii)}] $\scr{R} (A)$ and $\scr{R} ^c (A)$ are isometrically contained in $\cJ$ as orthgonal complemements, $\cJ = \scr{R} (A) \oplus \scr{R} ^c (A)$,
    \item[\emph{(iii)}] $\scr{R} (A) \cap \scr{R} ^c (A) = \{ 0 \}$.
\ei
\end{thm}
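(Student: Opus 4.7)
The skeleton of my proof will be the factorization $A = e \circ A_0$, where $A_0 : \cH \twoheadrightarrow \scr{R}(A)$ is the coisometry onto $\scr{R}(A)$ obtained from the range inner product (this is essentially the defining property, since by construction $\ip{A_0 x}{A_0 y}_A = \ip{x}{P_{\ker A}^\perp y}$). Since $A^* = A_0^* e^*$ and $A_0 A_0^* = I_{\scr{R}(A)}$, this gives $AA^* = e A_0 A_0^* e^* = ee^*$ in $\cJ$. Applying the same factorization to $B := \sqrt{I - AA^*} \in \scr{L}(\cJ)$ yields $jj^* = BB^* = I - AA^*$. Adding gives $ee^* + jj^* = I_\cJ$, and hence every $x \in \cJ$ decomposes as $x = ee^*x + jj^*x \in \scr{R}(A) + \scr{R}^c(A)$.

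For the Pythagorean characterization, I would assemble the block operator $T : \scr{R}(A) \oplus \scr{R}^c(A) \to \cJ$ by $T(y,z) := ey + jz$. Then $TT^* = ee^* + jj^* = I_\cJ$, so $T$ is a coisometry and $T^*$ is an isometry with $T^*x = (e^*x, j^*x)$. Isometry of $T^*$ immediately gives the Pythagorean identity at $y = ee^*x$, $z = jj^*x$. Conversely, any other decomposition $(y',z')$ with $y' + z' = x$ differs from $T^*x$ by an element $u \in \ker T$, and $\|y'\|^2 + \|z'\|^2 = \|T^*x\|^2 + \|u\|^2 \geq \|x\|^2$ with equality only if $u = 0$, which identifies the unique minimizing decomposition.

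For the overlap identity, the key tool is the intertwining $A\sqrt{I - A^*A} = \sqrt{I - AA^*}\, A$, which follows from the identity $A(A^*A)^n = (AA^*)^n A$ and polynomial approximation of the square root on the spectrum. This immediately gives $A\scr{R}^c(A^*) = \scr{R}(A\sqrt{I-A^*A}) = \scr{R}(\sqrt{I-AA^*}\, A) \subseteq \scr{R}(A) \cap \scr{R}^c(A)$. For the reverse inclusion, given $w = Au = \sqrt{I-AA^*}\, v$ with $u \in \ker(A)^\perp$, I would use the spectral theorem for $A^*A$ to produce a vector in $\scr{R}(\sqrt{I-A^*A})$ mapping to $w$; this range-juggling between functions of $A^*A$ and $AA^*$ is the main technical obstacle. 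Contractivity of $A : \scr{R}^c(A^*) \to \scr{R}^c(A)$ then drops out of the same intertwining together with the coisometry property of the range maps.

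Finally, for the equivalences: $A$ is a partial isometry if and only if $AA^* = ee^*$ is an orthogonal projection if and only if $(AA^*)(I-AA^*) = 0$. When $ee^* = P$ is a projection, the range inner product on $P\cJ$ coincides with the inherited $\cJ$-inner product (because $\ip{Px}{Py}_P = \ip{x}{Py}_\cJ = \ip{Px}{Py}_\cJ$), so $e$ is isometric; the same reasoning gives $j$ isometric and their ranges are orthogonal complements, yielding (ii). The implication (ii) $\Rightarrow$ (iii) is immediate, while (iii) $\Rightarrow$ (i) uses the overlap formula already established: $\scr{R}(A) \cap \scr{R}^c(A) = \{0\}$ forces $A\scr{R}^c(A^*) = \{0\}$, hence $A(I-A^*A) = 0$, i.e., $A = AA^*A$, which is the standard characterization of a partial isometry.
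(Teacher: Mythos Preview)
The paper does not actually prove this theorem: it is stated as a summary of known results with a citation to \cite[Chapter 16]{FMHb}, so there is no ``paper's own proof'' to compare against. Your argument is correct and follows the standard route for this material. The factorization $A = \mr{e} A_0$ with $A_0$ a coisometry gives $\mr{e}\mr{e}^* = AA^*$ cleanly, and the block coisometry $T$ is the right device for the Pythagorean characterization. The only place that is genuinely sketchy is the reverse inclusion in the overlap identity: you wave at the spectral theorem but do not carry it out. One clean way to close this is to note that with $P := AA^*$, the partial-fraction identity $\tfrac{1}{t(1-t)} = \tfrac{1}{t} + \tfrac{1}{1-t}$ shows (via the Borel functional calculus for $P$) that $\scr{R}(\sqrt{P}) \cap \scr{R}(\sqrt{I-P}) = \scr{R}(\sqrt{P(I-P)})$ as sets, and then the intertwining $A\sqrt{I-A^*A} = \sqrt{I-AA^*}\,A$ together with $\nbran A = \nbran \sqrt{P}$ (polar decomposition) identifies this with $A\,\scr{R}^c(A^*)$. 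The contractivity of $A : \scr{R}^c(A^*) \to \scr{R}^c(A)$ also deserves one explicit line: if $x = \sqrt{I-A^*A}\,h$ with $h \perp \ker\sqrt{I-A^*A}$, then $Ax = \sqrt{I-AA^*}\,Ah$ and $\|Ax\|_{\scr{R}^c(A)} \leq \|Ah\|_\cH \leq \|h\|_\cH = \|x\|_{\scr{R}^c(A^*)}$. With those two sentences added, the proof is complete.
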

Observe that, as in Aronszajn's sums of kernels theorem, the algebraic sum $\cJ = \scr{R} (A) + \scr{R} ^c (A)$ is a direct sum if and only if it is an orthogonal direct sum. 

\begin{thm} \label{compRKHS}
Let $\cH (K)$ be a RKHS on a set, $X$. If $\cH (k)$ is another RKHS on $X$ which embeds, contractively, in $\cH (K)$, and $\mr{e} : \cH (k) \hookrightarrow \cH (K)$ is the contractive embedding, then $\cH (k) = \scr{R} (\mr{e} )$ and the complementary space, $\scr{R} ^c (\mr{e} )$, is the RKHS on $X$ with reproducing kernel $K-k$. 
\end{thm}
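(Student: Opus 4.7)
The plan is to verify both assertions directly from the definition of an operator-range space and the reproducing property of kernel vectors. For the first assertion, observe that since $\mr{e}: \cH(k) \hookrightarrow \cH(K)$ is by hypothesis the inclusion map, it is injective, so $\nbker \mr{e} = \{ 0 \}$ and the operator-range inner product $\ip{\mr{e} x}{\mr{e} y}_{\mr{e}} := \ip{x}{P_{\nbker \mr{e}}^\perp y}_{\cH(k)}$ reduces to $\ip{x}{y}_{\cH(k)}$. Hence $\scr{R}(\mr{e}) = \cH(k)$ as Hilbert spaces of functions on $X$.

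For the second assertion, set $D := \sqrt{I - \mr{e} \mr{e}^*}$, a self-adjoint contraction on $\cH(K)$, so that $\scr{R}^c(\mr{e}) = \scr{R}(D)$ is contractively contained in $\cH(K)$; in particular, point evaluations are bounded on $\scr{R}(D)$, making it a RKHS on $X$. The key preliminary computation is that $\mr{e}^* K_x = k_x$: for any $g \in \cH(k)$, $\ip{\mr{e}^* K_x}{g}_{\cH(k)} = \ip{K_x}{\mr{e} g}_{\cH(K)} = g(x) = \ip{k_x}{g}_{\cH(k)}$.

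I would then identify the kernel vector of $\scr{R}(D)$ at $x$ as $k^c_x = D^2 K_x = (I - \mr{e}\mr{e}^*) K_x$. To see this, write an arbitrary element of $\scr{R}(D)$ as $f = Dg$ with $g \in (\nbker D)^\perp$. Since $DK_x \in \nbran D \subseteq (\nbker D)^\perp$, we have $k^c_x = D(DK_x) \in \scr{R}(D)$, and by the defining inner product of an operator-range space,
\begin{align*}
\ip{k^c_x}{f}_{\scr{R}(D)} = \ip{D K_x}{g}_{\cH(K)} = \ip{K_x}{Dg}_{\cH(K)} = f(x),
\end{align*}
as required. The reproducing kernel of $\scr{R}^c(\mr{e})$ is then computed by
\begin{align*}
k^c(x,y) = \ip{k^c_x}{k^c_y}_{\scr{R}(D)} = \ip{DK_x}{DK_y}_{\cH(K)} = \ip{K_x}{(I - \mr{e}\mr{e}^*)K_y}_{\cH(K)} = K(x,y) - k(x,y),
\end{align*}
using self-adjointness of $D$ and $\mr{e}^* K_y = k_y$. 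The Aronszajn--Moore uniqueness theorem then identifies $\scr{R}^c(\mr{e})$ as \emph{the} RKHS on $X$ with reproducing kernel $K - k$.

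The only genuinely delicate step is the correct identification of the kernel vectors of $\scr{R}(D)$: one must remember that the operator-range inner product contains the projection onto $(\nbker D)^\perp$, which is why the kernel vector is $D^2 K_x$ rather than $DK_x$. Once this subtlety is handled, the reduction $\mr{e}^* K_x = k_x$ and the self-adjointness of $D$ make the final kernel computation a one-line calculation.
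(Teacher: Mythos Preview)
Your proof is correct and follows essentially the same route as the paper's: both identify $\scr{R}(\mr{e}) = \cH(k)$ via injectivity of $\mr{e}$, compute $\mr{e}^* K_x = k_x$, show that the point evaluation vectors of $\scr{R}^c(\mr{e}) = \scr{R}(\sqrt{I-\mr{e}\mr{e}^*})$ are $(I-\mr{e}\mr{e}^*)K_x$, and then read off the kernel $K-k$. Your explicit handling of the projection onto $(\nbker D)^\perp$ when identifying the kernel vectors is slightly more careful than the paper's write-up, but the argument is the same.
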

An embedding of RKHS, $\mr{e}: \cH (k) \hookrightarrow \cH (K)$, is necessarily injective. 
\begin{proof}
Let $\mr{e} : \cH (k) \hookrightarrow \cH (K)$ be the contractive embedding and consider the operator--range space of $\mr{e}$. Given any $g,h \in \cH (k)$, we have that 
$$ \ip{\mr{e} g}{\mr{e} h}_{\mr{e}} = \ip{g}{h}_{\cH (k)}, $$ since $\mr{e}$ is injective. Hence, for any $x \in X$,
\be \ip{\mr{e} k _x}{\mr{e}h}_{\mr{e}} = \ip{k _x}{h}_{\cH (k)} = h (x) = (\mr{e} h ) (x), \label{kerembed} \ee and it follows that $\scr{R} (\mr{e} ) = \cH (k)$. Indeed, equation (\ref{kerembed}) shows that $\scr{R} (\mr{e} )$ is a reproducing kernel Hilbert space on $X$ with point evaluation vectors $\wt{k} _x := \mr{e} k_x$ and that for any $x,y \in X$,
$$ \wt{k} (x,y) = \ip{\wt{k} _x}{\wt{k} _y}_\mr{e} = \ip{k_x}{k_y} = k(x,y), $$ so that $\scr{R} (\mr{e} ) = \cH (\wt{k} ) = \cH (k)$.
Now consider the complementary space, $\scr{R} ^c (\mr{e} )$, of $\cH (k)= \scr{R} (\mr{e} )$. Since this complementary space is contractively contained in $\cH (K)$, for any $\sqrt{I - \mr{e} \mr{e} ^*} g \in \scr{R} ^c (\mr{e} )$, 
\ba \ip{(I - \mr{e} \mr{e} ^*) K_x}{\sqrt{I - \mr{e} \mr{e} ^*} g}_{\scr{H}}
& = & \ip{\sqrt{I - \mr{e} \mr{e} ^*} K_x}{g}_{\cH (K)} \\
& = & ( \sqrt{I - \mr{e} \mr{e} ^*} g ) (x), \ea proving that $\scr{R} ^c (\mr{e})$ is also a RKHS on $X$ with point evaluation vectors $k ' _x := ( I - \mr{e} \mr{e} ^* ) K _x$. Hence, 
\ba k'(x,y) & = & \ip{k' _x}{k ' _y}_\scr{H} = \ip{( I - \mr{e} \mr{e} ^* ) K_x }{( I - \mr{e} \mr{e} ^* ) K_y}_\scr{H} \\
& = & \ip{K_x}{(I - \mr{e} \mr{e} ^*) K_y}_{\cH (K)} \\
& = & K (x,y) - k (x,y). \ea 
If $\mr{j} : \scr{R} ^c (\mr{e}) \hookrightarrow \scr{H} ^+ (\mu )$ is the contractive embedding, then observe that $\mr{j} \mr{j} ^* + \mr{e} \mr{e} ^* = I_{\cH (K)}$, so that 
$$ \scr{R} (\mr{j} ) = \scr{R} (\sqrt{\mr{j} \mr{j} ^*} ) = \scr{R} ^c (\mr{e} ). $$ 
\end{proof}
The previous theorem and Theorem \ref{oprancomp} provide additional information on the structure and decomposition of $\cH (k +K)$ in Aronszajn's sums of kernels theorem.
\begin{cor}
Let $k,K$ be positive kernel functions on a set, $X$ and let $\mr{e} : \cH (k) \hookrightarrow \cH (k+K)$ and $\mr{j} : \cH (K) \hookrightarrow \cH (k+K)$ be the contractive embeddings. Then we can identify $\cH (k)$ and $\cH (K)$ with the operator range spaces $\scr{R} (\mr{e} )$ and $\scr{R} (\mr{j})$, respectively. Moreover, $I_{\cH (k+K)} = \mr{e} \mr{e} ^* + \mr{j} \mr{j} ^*$ so that 
$\cH (K) = \scr{R} ^c (\mr{e} )$ is the complementary space of $\scr{R} (\mr{e} ) = \cH (k)$, and given any $h \in \cH (k + K)$, 
$$ \| h \|_{k+K} ^2 = \| \mr{e} ^* h \| _k ^2 + \| \mr{j} ^* h \| ^2 _K. $$ The intersection space, $\cH (k) \cap \cH (K)$ is equal to $\mr{e} \scr{R} ^c (\mr{e} ^*)$ and $\mr{j} \scr{R} ^c (\mr{j} ^* )$, and $\mr{e} : \scr{R} ^c (\mr{e} ^*) \rightarrow \scr{R} ^c (\mr{e} ) = \cH (K)$, $\mr{j} : \scr{R} ^c (\mr{j} ^* ) \rightarrow \scr{R} ^c (\mr{j})$ are contractions.
\end{cor}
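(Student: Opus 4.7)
The plan is to deduce this corollary as a direct application of Theorem \ref{compRKHS} and Theorem \ref{oprancomp}, with essentially one substantive observation: the complementary spaces of the embeddings are themselves explicit RKHS with explicit kernels, because $(k+K)-k=K$ and $(k+K)-K=k$ are positive kernel functions.

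First I would note that $k, K \leq k+K$ trivially, so Aronszajn's inclusion theorem produces the contractive embeddings $\mr{e}: \cH(k) \hookrightarrow \cH(k+K)$ and $\mr{j}: \cH(K) \hookrightarrow \cH(k+K)$. Since these embeddings are injective, Theorem \ref{compRKHS} applies to $\mr{e}$ and identifies $\scr{R}(\mr{e}) = \cH(k)$ isometrically, while its complementary space is the RKHS with reproducing kernel $(k+K)-k = K$, \emph{i.e.} $\scr{R}^c(\mr{e}) = \cH(K)$. The contractive embedding of $\scr{R}^c(\mr{e})$ into $\cH(k+K)$ appearing in Theorem \ref{oprancomp} is then exactly $\mr{j}$. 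The symmetric argument applied to $\mr{j}$ yields $\scr{R}(\mr{j}) = \cH(K)$ and $\scr{R}^c(\mr{j}) = \cH(k)$.

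Next, Theorem \ref{oprancomp} applied to the contraction $A = \mr{e}$ immediately gives the resolution of identity $I_{\cH(k+K)} = \mr{e}\mr{e}^* + \mr{j}\mr{j}^*$ and, for any $h \in \cH(k+K)$, the Pythagorean decomposition
$$ \| h \|_{k+K}^2 = \| \mr{e}\mr{e}^* h \|_{\scr{R}(\mr{e})}^2 + \| \mr{j}\mr{j}^* h \|_{\scr{R}^c(\mr{e})}^2. $$
Because $\mr{e}$ is injective, it is isometric from $\cH(k)$ onto $\scr{R}(\mr{e})$, hence $\| \mr{e}\mr{e}^*h \|_{\scr{R}(\mr{e})} = \| \mr{e}^* h \|_k$; an identical observation for $\mr{j}$ collapses the right-hand side to $\| \mr{e}^* h \|_k^2 + \| \mr{j}^* h \|_K^2$.

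Finally, the intersection formulas $\cH(k) \cap \cH(K) = \scr{R}(\mr{e}) \cap \scr{R}^c(\mr{e}) = \mr{e}\,\scr{R}^c(\mr{e}^*) = \mr{j}\,\scr{R}^c(\mr{j}^*)$, together with the contractive action of $\mr{e}: \scr{R}^c(\mr{e}^*) \to \scr{R}^c(\mr{e}) = \cH(K)$ and $\mr{j}: \scr{R}^c(\mr{j}^*) \to \scr{R}^c(\mr{j}) = \cH(k)$, are a verbatim restatement of the overlapping-space clause of Theorem \ref{oprancomp}, using the identifications established in the first paragraph. There is no genuine obstacle: once one sees that $\scr{R}^c(\mr{e}) = \cH(K)$ by reading the kernel off of Theorem \ref{compRKHS}, the remainder is a bookkeeping exercise in translating Theorem \ref{oprancomp} into the present notation.
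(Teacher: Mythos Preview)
Your proposal is correct and follows exactly the approach the paper intends: the corollary is stated in the paper without proof, immediately after the remark that ``The previous theorem and Theorem \ref{oprancomp} provide additional information on the structure and decomposition of $\cH(k+K)$,'' and your argument simply fills in that deduction. The one observation you highlight---that $(k+K)-k=K$ forces $\scr{R}^c(\mr{e})=\cH(K)$ via Theorem \ref{compRKHS}---is precisely the link needed, and the rest is, as you say, translation of Theorem \ref{oprancomp}.
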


Finally, as described in \cite{GO-rkhs} and \cite[Section 5]{MS-dBB}, we can define a pair of natural `lattice operations', $\vee$ and $\wedge$ on the set of all positive kernel functions on a fixed set, $X$. Given two positive kernel functions, $k$ and $K$, on $X$, let $k\vee K := k + K$, a positive kernel function on $X$. We can also construct a second RKHS on $X$ by defining 
$$ \mr{int} (k , K) := \cH (k ) \cap \cH (K), $$ equipped with the inner product
$$ \ip{g}{h}_{k \wedge K} := \ip{g}{h}_k + \ip{g}{h}_K. $$ It is not difficult to verify that $\mr{int} (k,K)$, equipped with this inner product is complete, and that point evaluation at any $x \in X$ defines a bounded linear functional on $\mr{int} (k, K)$, so that this is a RKHS, $\cH (k \wedge K)$, of functions on $X$. The following theorem describes a useful relationship between $\cH (k + K)$ and $\cH (k \wedge K)$ \cite[Theorem 5.2]{MS-dBB}, \cite{GO-rkhs}. 

\begin{thm*}[Sums and intersections of RKHS]
Let $k, K \geq 0$ be positive kernel functions on a set, $X$. Define two linear maps, $U_\vee$ and $U_\wedge$ of $\cH (k + K)$ and $\cH (k\wedge K)$, respectively, into $\cH (k) \oplus \cH (K)$ by 
$$ U_\vee (k +K )_x := k_x \oplus K_x, \ x \in X, \quad \mbox{and} \quad U_\wedge f := f \oplus -f, \ f \in \cH (k \wedge K). $$
Then, $U_\vee, U_\wedge$ both define isometries into $\cH (k) \oplus \cH ( K)$ with $\nbran U_\vee = \nbran U_\wedge ^\perp$, so that 
$$ \cH (k) \oplus \cH (K) = U_\vee \cH (k + K ) \oplus U_\wedge \cH (k \wedge K ). $$
The point evaluation vectors for $\cH (k \wedge K) = \cH (k) \cap \cH (K)$ are given by the formulas
$$ (k \wedge K) _x = \frac{1}{2} U_\wedge ^* ( K_x \oplus - k_x ) = U_\wedge ^* (K_x \oplus 0 ) = U_\wedge ^* ( 0 \oplus -k_x). $$
\end{thm*}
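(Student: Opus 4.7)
The plan is to verify the isometric properties of $U_\vee$ and $U_\wedge$ on dense subspaces, compute the adjoint $U_\vee^*$ explicitly, and use this to identify $\nbran U_\vee^\perp$ with $\nbran U_\wedge$. The isometry of $U_\vee$ can be checked on the linear span of kernel vectors: a direct expansion gives
$$\ip{U_\vee (k+K)_x}{U_\vee (k+K)_y}_{\cH(k) \oplus \cH(K)} = k(x,y) + K(x,y) = (k+K)(x,y),$$
matching $\ip{(k+K)_x}{(k+K)_y}_{k+K}$; since the kernel vectors are dense in $\cH(k+K)$, this extends to a full isometry. The isometry of $U_\wedge$ is immediate from the definition of the intersection inner product, since $\|U_\wedge f\|^2 = \|f\|_k^2 + \|f\|_K^2 = \|f\|_{k\wedge K}^2$.

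Next, I would compute $U_\vee^*$ using adjointness together with the reproducing property on $\cH(k+K)$: for $g \oplus h \in \cH(k) \oplus \cH(K)$,
$$\bigl(U_\vee^*(g \oplus h)\bigr)(y) = \ip{(k+K)_y}{U_\vee^*(g \oplus h)}_{k+K} = \ip{U_\vee(k+K)_y}{g \oplus h} = g(y) + h(y),$$
so $U_\vee^*(g \oplus h) = g+h$. By the sums of kernels theorem this sum lies in $\cH(k+K)$, confirming consistency. Then $g \oplus h \in \nbran U_\vee^\perp = \nbker U_\vee^*$ exactly when $h=-g$; but then $g$ lies in both $\cH(k)$ and $\cH(K)$, so $g \in \cH(k \wedge K)$ and $g \oplus h = U_\wedge g$. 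This gives $\nbran U_\vee^\perp = \nbran U_\wedge$, hence the claimed orthogonal decomposition of $\cH(k) \oplus \cH(K)$.

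For the kernel formulas, I would test $U_\wedge^*$ applied to each proposed representative against an arbitrary $f \in \cH(k \wedge K)$. For instance,
$$\ip{U_\wedge^*(K_x \oplus 0)}{f}_{k \wedge K} = \ip{K_x \oplus 0}{f \oplus -f}_{\cH(k) \oplus \cH(K)} = f(x),$$
and the other two expressions yield $f(x)$ and $\tfrac{1}{2}(f(x)+f(x))$ by the same mechanism, so each equals $(k \wedge K)_x$. The representatives are consistent because their pairwise differences lie in $\nbran U_\wedge^\perp = \nbran U_\vee$, and are therefore annihilated by $U_\wedge^*$. I expect the main obstacle to be the book-keeping between the four inner products on $\cH(k)$, $\cH(K)$, $\cH(k+K)$, and $\cH(k \wedge K)$, and in particular verifying that the identification $U_\vee^*(g \oplus h) = g+h$ is compatible with the sums of kernels theorem.
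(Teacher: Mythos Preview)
Your proof is correct and complete. The paper does not actually prove this theorem; it is stated in the background section as a cited result from \cite{MS-dBB} and \cite{GO-rkhs}, so there is no in-paper argument to compare against. Your approach --- verifying $U_\vee$ is isometric on kernel vectors, computing $U_\vee^*(g\oplus h)=g+h$ via the reproducing property, identifying $\nbker U_\vee^* = \nbran U_\wedge$, and checking the kernel formulas by pairing against an arbitrary $f$ --- is the natural direct verification and matches the standard argument in those references. One small point worth making explicit: the reverse inclusion $\nbran U_\wedge \subseteq \nbran U_\vee^\perp$ follows immediately from $U_\vee^*(f\oplus -f)=f-f=0$, and $\nbran U_\wedge$ is closed because $\cH(k\wedge K)$ is complete (which the paper asserts just before the theorem); you use both facts implicitly but they are trivial.
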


\subsection{Positive quadratic forms}

\label{ss:forms}

A quadratic or sesquilinear form, $\fq : \nbdom \fq \times \nbdom \fq \rightarrow \C$, with dense \emph{form domain}, $\nbdom \fq$ in a separable Hilbert space, $\cH$, is said to be positive semi-definite if $\fq (x,x) \geq 0$ for all $x \in \nbdom \fq$. Such a quadratic form is said to be \emph{closed}, if $\nbdom \fq$ is complete with respect to the norm induced by the inner product
$$ \ip{\cdot}{\cdot}_{\fq +\mf{id}} := \ip{\cdot}{\cdot}_\cH + \fq (\cdot , \cdot )$$ and $\fq$ is \emph{closeable} if it has a closed extension. We will let $\hat{\cH} (\fq)$ denote the Hilbert space completion of $\nbdom \fq$ with respect to this $\fq+\mf{id}-$inner product. Hence, $\fq$ is closed if and only if $\hat{\cH} (\fq) = \nbdom \fq$. If $\fq\geq 0$ is closeable, then its \emph{closure}, $\ov{\fq}$, is the minimal closed extension of $\fq$. By Kato, closed positive semi-definite forms obey an `unbounded version' of the Riesz Lemma \cite[Chapter VI, Theorem 2.1 and Theorem 2.23]{Kato}. Namely, $\fq \geq 0$ is closed if and only if there is a \emph{unique} self-adjoint, densely--defined and positive semi-definite operator, $A$, so that $\nbdom \fq = \nbdom \sqrt{A}$ and 
$$ \fq (x,x) = \fq_A (x,x) = \ip{\sqrt{A} x}{\sqrt{A} x}_{\cH}. $$
 Any self-adjoint operator is necessarily closed. Following Kato and Simon, we can define a partial order on densely--defined and positive semi-definite forms by $\fq_1 \leq \fq_2$ if
\bi
   \item[(i)] $\nbdom \fq_2 \subseteq \nbdom \fq_1$, and
   \item[(ii)] $\fq_1 (x,x) \leq \fq_2 (x,x)$ for all $x \in \nbdom \fq_2$. 
\ei
In particular, if $\fq_A$ and $\fq_B$ are the closed forms of the self-adjoint and positive semi-definite operators $A$ and $B$, we say that $A \leq B$ in the \emph{form sense} if $\fq_A \leq \fq_B$ as forms. This reduces to the usual L\"owner partial order on bounded, self-adjoint operators, if $A$ and $B$ are bounded. At first sight, it may seem strange that the `larger' form in the above definition may have a smaller domain. The following result of Kato provides some justification for this \cite[Chapter VI, Theorem 2.21]{Kato}, \cite[Proposition 1.1]{Simon-forms}.

\begin{lemma}[Kato] \label{resolvent}
Let $A,B \geq 0$ be self-adjoint and densely--defined in $\cH$. Then $A \leq B$ in the form sense if and only if 
$$ ( t I + A ) ^{-1} \leq (tI + B ) ^{-1}, $$ for any $t>0$.
\end{lemma}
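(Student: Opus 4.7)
The plan is to prove the equivalence by shuttling between the closed form $\fq_A$ and the bounded resolvent $(A + tI)^{-1}$ via two complementary variational formulas, applied symmetrically to $A$ and $B$. The bridge between them is the Kato--Riesz correspondence recalled just above: each closed positive semi-definite form arises as $\fq_A (x,x) = \| \sqrt{A}\, x \|^2 _\cH$ on $\nbdom \fq_A = \nbdom \sqrt{A}$, for a unique self-adjoint positive operator $A$.

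For the forward direction, shift by $tI > 0$ to reduce to the positive self-adjoint operators $A + tI,\ B + tI$ with bounded inverses; the form order is preserved by translation. The key identity is the variational expression
\be
\ip{y}{(A + tI)^{-1} y}_\cH \;=\; \sup _{x \in \nbdom \fq_A} \Bigl( 2\, \nbre \ip{x}{y}_\cH - \fq_{A + tI} (x, x) \Bigr), \quad y \in \cH,
\label{resvar}
\ee
proved by completing the square using $\fq_{A + tI}(x,x) = \| \sqrt{A + tI}\, x \|^2$ together with the fact that $(A + tI)^{-1} y \in \nbdom A \subseteq \nbdom \fq_A$. Applying (\ref{resvar}) to both $A$ and $B$, the domain inclusion $\nbdom \fq_B \subseteq \nbdom \fq_A$ restricts the index set of the supremum while the pointwise inequality $\fq_{A + tI} \leq \fq_{B + tI}$ on $\nbdom \fq_B$ dominates the integrand in the favorable direction; combined, these produce the claimed comparison of resolvents.

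For the converse I would invoke the spectral theorem. Writing $\fq_A (x,x) = \int _0 ^\infty \la \, d \mu _x (\la)$, where $\mu _x$ is the scalar spectral measure of $A$ at $x$, the elementary identity $\tfrac{t \la}{t + \la} = t - \tfrac{t^2}{t + \la}$ is non-negative and monotonically increasing in $t > 0$ with pointwise limit $\la$, so monotone convergence yields
\be
\fq_A (x,x) \;=\; \sup _{t > 0} \Bigl( t \| x \|^2 _\cH - t^2 \ip{x}{(A + tI)^{-1} x}_\cH \Bigr), \quad x \in \cH,
\label{formfromres}
\ee
interpreted in $[0, +\infty]$, with $\nbdom \fq_A$ equal to the set of $x$ where the supremum is finite. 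The assumed comparison of resolvents then bounds each term inside the sup for one operator by the corresponding term for the other, delivering both the pointwise form inequality on all of $\cH$ and the required domain inclusion, since finiteness of the larger form value forces finiteness of the smaller.

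The main obstacle I anticipate is justifying the completion of the square in (\ref{resvar}) on the full form domain $\nbdom \sqrt{A + tI}$ rather than merely on $\nbdom A$; this rests on $\nbdom A$ being a core for $\fq_A$ and on $\nbran (A + tI)^{-1} = \nbdom A$. Both are standard consequences of self-adjointness of $A$ but deserve explicit verification, as they are the step at which `form level' information is genuinely used rather than `operator level' information. By contrast, (\ref{formfromres}) is routine once the spectral theorem is in hand, and the resulting converse argument then proceeds termwise.
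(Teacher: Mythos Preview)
The paper does not supply its own proof of this lemma; it is stated with citations to Kato and to Simon, so there is nothing in the paper to compare against. Your two variational formulas are the standard route and the argument is correct.

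One point to flag: the resolvent inequality in the lemma as printed is reversed. The correct equivalence is $A \leq B$ in the form sense if and only if $(tI + B)^{-1} \leq (tI + A)^{-1}$ for all $t>0$; a smaller form yields a \emph{larger} resolvent. Your argument in fact establishes this correct direction: in your first variational formula the supremum for $A$ is taken over the larger index set $\nbdom \fq _A \supseteq \nbdom \fq _B$ with a pointwise larger integrand (since $-\fq _{A+tI} \geq -\fq _{B+tI}$ on $\nbdom \fq _B$), which gives $\ip{y}{(A+tI)^{-1} y}_\cH \geq \ip{y}{(B+tI)^{-1} y}_\cH$; and your second formula transmits a resolvent comparison back to the form comparison in the same direction. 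So where you write that the bounds ``produce the claimed comparison of resolvents,'' make explicit that they produce the reverse of what is literally printed. This typo is harmless for the paper itself, since its only invocation of the lemma (in the proof of Proposition~\ref{extreme}) is with equality.
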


Recall that if $A$ is a closed operator with dense domain, $\nbdom A \subseteq \cH$, that $\scr{D} \subseteq \nbdom A$ is called a core for $A$, if $A$ is equal to the closure of its restriction to $\scr{D}$. Similarly, if $\fq$ is a closed, densely--defined and positive semi-definite form, a (necessarily dense) set $\scr{D} \subseteq \nbdom \fq$ is called a \emph{form-core} for $\fq$, if $\scr{D}$ is dense in $\hat{\cH} (\fq)$. It is not difficult to verify that if $\fq=\fq_A$ is closed, then $\scr{D}$ is a form-core for $\fq$ if and only if $\scr{D}$ is a core for $\sqrt{A}$.  \\

\paragraph{Toeplitz forms.} The classical Hardy space, $H^2 = H^2 (\D )$, is the Hilbert space of square-summable Taylor series in the complex unit disk, equipped with the $\ell^2-$inner product of these coefficients. By results of Fatou, any element of $H^2$ has non-tangential boundary limits almost everywhere on the unit circle, $\partial \D$, with respect to normalized Lebesgue measure, $m$ \cite{Hoff}. Identifying any $h \in H^2$ with its boundary limits defines an isometric inclusion of $H^2$ into $L^2 = L^2 (m)$. Classically,  Toeplitz operators, $T$, on $H^2$, are defined as the compression of bounded multiplication operators on $L^2$ to $H^2$. Namely, $T = T_g := P_{H^2} M_g | _{H^2}$, and $\| T _g \| = \| M_g \| = \| g \| _\infty$ so that $g \in L^\infty$. A theorem of Brown and Halmos, \cite[Theorem 6]{BrownHalmos}, characterizes the Toeplitz operators as the set of all bounded operators, $T$, on $H^2$ which obey the simple algebraic condition,
$$ S^* T S = T, $$ where $S = M_z$, is the \emph{shift} on $H^2$, the isometry of multiplication by $z$. Under the boundary value identification of $H^2$ with the subspace $H^2 (m) \subseteq L^2 (m , \partial \D)$, the shift is identified with the isometry $S=Z^m = M^m _\zeta |_{H^2(m)}$. 

Recall, as described in the Outline, given a positive, finite and regular Borel measure, $\mu$, on $\partial \D$, we can associate to $\mu$ the densely--defined and positive semi-definite quadratic form, $\fq _\mu$, with dense form domain, $\nbdom \fq _\mu := A (\D )$ in $H^2 = H^2 (m )$, where $m$ is normalized Lebesgue measure.
This positive form, $\fq _\mu$, is an example of a \emph{Toeplitz form}, as studied by Grenander and Szeg\"o in \cite{Szego}. Namely, $\nbdom \fq_\mu = A (\D )$ obeys $S \nbdom \fq_\mu \subseteq \nbdom \fq_\mu$, and 
$$ \fq_\mu (S x , Sy) = \fq_\mu (x,y); \quad \quad x,y \in A (\D ) = \nbdom \fq_\mu.$$ If $\fq_\mu$ is closeable so that $\ov{\fq _\mu} = \fq_T$ for a closed, self-adjoint $T \geq 0$, then by Kato's unbounded Riesz lemma we have that $S^* T S = T$, and our results will show that
$$ \fq_\mu (x,y) = \ip{M_{\sqrt{f}} x}{M_{\sqrt{f}} y}_{L^2}; \quad \quad f = \frac{d\mu}{dm} \in L^1, \ x,y \in A (\D ), $$ see Theorem \ref{RKac}. Hence, $T = T_f = P_{H^2} M_f | _{H^2}$ is a closed, potentially unbounded Toeplitz operator with symbol $f$, in this `quadratic form sense'. In particular, if $T\geq 0$ is bounded, which happens if and only if $\mu \leq t^2 m$ for some $t >0$, then by the Riesz representation lemma, $S^* T S = T$, so that $T$ is a bounded Toeplitz operator by Brown--Halmos, in which case $T = T_f$ for $f = \frac{d\mu}{dm} \in L^\infty$ and $\| f \| _\infty \leq t$, by Theorem \ref{RKdominate} and Corollary \ref{bddRN}.

\section{Spaces of Cauchy transforms}
\label{CTsec}

Let $\mu$ be a positive, finite and regular Borel measure on the complex unit circle. Recall that given any $h \in H^2 (\mu ) = \C [ \zeta ] ^{- \| \cdot \| _{L^2 (\mu)}}$, its $\mu-$Cauchy transform is the function
\be (\scr{C} _\mu h) (z) := \int _{\partial \D } h (\zeta) \, \frac{1}{1-z \ov{\zeta}} \mu (d\zeta) = \ip{k_z}{h}_{L^2 (\mu )}; \quad k_z (\zeta ) := (1 - \ov{z} \zeta ) ^{-1}. \ee We will call the functions $k_z$, $z \in \D$, \emph{Szeg\"o kernel vectors}.

Recall that $A (\D )$ denotes the disk algebra, the unital Banach algebra of analytic functions in $\D$ which extend continuously to the boundary, $\partial \D$. Since the analytic polynomials are supremum-norm dense in $A (\D )$, viewed as a subspace of the continuous functions, $\scr{C} (\partial \D )$, on the circle and $H^2 (\mu ) = A (\D ) ^{-\| \cdot \| _{L^2 (\mu )}}$, it follows that $H^2 (\mu ) = \C [\zeta ] ^{-\| \cdot \| _{L^2 (\mu )}} = A (\D ) ^{-\| \cdot \| _{L^2 (\mu )}}$. Similarly, $\scr{K} _\D := \bigvee _{z \in \D} k_z$ is also supremum-norm dense in $A (\D ) \subseteq \scr{C} (\partial \D )$, so that this subspace is also dense in $H^2 (\mu )$. 

\begin{lemma} \label{lemholo}
The $\mu-$Cauchy transform of any $h \in H^2 (\mu )$ is holomorphic in $\D$.
\end{lemma}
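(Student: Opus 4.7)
The plan is to write the Cauchy transform as a power series in $z$ with a positive radius of convergence at least $1$. First I would fix $z \in \D$ with $|z| \leq r < 1$ and expand the Szegő kernel in a geometric series,
\[
\frac{1}{1-z\bar{\zeta}} \;=\; \sum_{n=0}^{\infty} z^n \bar{\zeta}^{\,n},
\]
noting that the convergence is uniform in $\zeta \in \partial \D$ for $|z| \leq r < 1$. Since $h \in H^2(\mu) \subseteq L^1(\mu)$ and $\mu$ is finite, the partial sums $h(\zeta) \sum_{n=0}^{N} z^n \bar\zeta^{\,n}$ are dominated by the $\mu$-integrable function $(1-r)^{-1} |h(\zeta)|$, so I may interchange sum and integral to obtain
\[
(\scr{C}_\mu h)(z) \;=\; \sum_{n=0}^{\infty} c_n\, z^n,
\qquad c_n := \int_{\partial \D} h(\zeta)\, \bar{\zeta}^{\,n}\, \mu(d\zeta).
\]

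Next I would bound the coefficients: Cauchy--Schwarz in $L^2(\mu)$ (with the fact that $|\bar{\zeta}^n| = 1$ on $\partial \D$) gives
\[
|c_n| \;\leq\; \|h\|_{L^2(\mu)}\, \sqrt{\mu(\partial \D)}.
\]
Hence the radius of convergence of the power series $\sum_n c_n z^n$ is at least $1$, and the series defines a holomorphic function on $\D$ that agrees with $\scr{C}_\mu h$ there.

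I do not expect any genuine obstacle here; the lemma is a routine application of uniform convergence plus Cauchy--Schwarz. The only thing worth flagging is that the argument equivalently amounts to observing that $\overline{k_z(\zeta)} = (1-z\bar\zeta)^{-1}$ depends holomorphically on $z$ for each fixed $\zeta \in \partial \D$, so that via the inner product representation $(\scr{C}_\mu h)(z) = \ip{k_z}{h}_{L^2(\mu)}$ (which is conjugate-linear in $k_z$) the function $\scr{C}_\mu h$ inherits holomorphicity on $\D$. Either formulation would yield the lemma in a few lines.
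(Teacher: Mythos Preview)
Your argument is correct. Both the power-series route and the final remark about the holomorphic dependence of $\ov{k_z}$ on $z$ are valid proofs of the lemma.

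The paper takes a slightly different but equally standard path: rather than expanding in a power series, it computes the complex derivative directly via the difference quotient, showing that $\ov{\eps}^{\,-1}(k_{z+\eps}-k_z)$ converges in supremum norm on $\partial\D$ (hence in $H^2(\mu)$) to $\zeta/(1-\ov{z}\zeta)^2$, so that $(\scr{C}_\mu h)'(z)$ exists and equals an $L^2(\mu)$ inner product. Your approach has the mild advantage of producing the Taylor coefficients $c_n=\ip{\zeta^n}{h}_{L^2(\mu)}$ explicitly, which the paper derives separately just after Lemma~\ref{lemCT}; the paper's approach has the mild advantage of giving the derivative $(\scr{C}_\mu h)'$ as an inner product formula. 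Either way the lemma is routine.
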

\begin{proof}
Given any $z \in \D$, since $(\scr{C} _\mu h) (z) := \ip{k_z}{h}_{L^2 (\mu )}$, consider the Leibniz difference quotient
$$ \lim _{\eps \rightarrow 0} \frac{(\scr{C} _\mu h) (z +\eps) -(\scr{C} _\mu h) (z)}{\eps} = \lim _{\eps \rightarrow 0} \eps ^{-1} \ip{k_{z+\eps} - k_z}{h}_{L^2 (\mu )}. $$ (Here, recall that our inner products are conjugate linear in their first argument.) This limit will exist and $\scr{C} _\mu h$ will be holomorphic, if and only if the limit of $\ov{\eps} ^{-1} (k_{z+\eps} - k_z)$ exists in $H^2 (\mu )$. This limit exists in supremum norm on the circle (and so belongs to $A (\D )$), and so it certainly exists in the $L^2 (\mu)-$norm by Cauchy--Schwarz. Indeed,
\ba \lim _{\eps \rightarrow 0} \ov{\frac{k_{z+\eps} (\zeta ) - k_z (\zeta )}{\ov{\eps}}} & = & \lim \frac{1 - \ov{\zeta} z - (1 - \ov{\zeta} z - \ov{\zeta} \eps)}{\eps (1 - \ov{\zeta} z) (1-\ov{\zeta} (z +\eps ))} \\
& = & \frac{\ov{\zeta}}{(1-\ov{\zeta} z) ^2}, \ea and this limit is continuous on $\partial \D$ for any fixed $z \in \D$. Hence $\scr{C} _\mu h \in \scr{O} (\D)$ for any $h \in H^2 (\mu )$.
\end{proof}

Let $\scr{H} ^+ (\mu ) := \scr{C} _\mu H^2 (\mu ) \subseteq \scr{O} (\D)$ be the complex vector space of $\mu-$Cauchy transforms equipped with the inner product,
$$ \ip{\scr{C} _\mu g}{\scr{C} _\mu h}_\mu := \ip{g}{h}_{L^2 (\mu )}. $$ 
\begin{lemma} \label{lemCT}
The space of $\mu-$Cauchy transforms, $\scr{H} ^+ (\mu )$, is a RKHS of analytic functions in $\D$ with reproducing kernel
\ba k^\mu (z,w) & := & \int _{\partial \D} \frac{1}{1-z \ov{\zeta}} \frac{1}{1-\ov{w} \zeta} \mu (d\zeta ) = \ip{k_z}{k_w}_{L^2 (\mu )} \\
& = & \frac{1}{2} \frac{H_\mu (z) + \ov{H_\mu (w)}}{1-z \ov{w}}, \ea 
where $H_\mu (z) = \int _{\partial \D} \frac{1 + z \ov{\zeta}}{1-z\ov{\zeta}} \mu (d\zeta)$ is the Herglotz--Riesz transform of $\mu$.
\end{lemma}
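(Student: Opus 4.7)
}
My plan is to show that $\scr{C}_\mu$ is a unitary map from $H^2(\mu)$ onto $\scr{H}^+(\mu)$ (essentially by definition), identify the kernel vectors as Cauchy transforms of Szeg\"o kernels, and then reduce the two integral/Herglotz expressions for $k^\mu(z,w)$ to the same integral via a direct identity.

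First I would observe that the inner product on $\scr{H}^+(\mu)$ has been defined so as to make $\scr{C}_\mu : H^2(\mu) \to \scr{H}^+(\mu)$ a surjective isometry. Since the Szeg\"o kernels $k_z \in A(\D) \subseteq H^2(\mu)$ (using the density discussion immediately before Lemma \ref{lemholo}), the map $\scr{C}_\mu$ is also injective: if $\scr{C}_\mu h \equiv 0$ on $\D$ then $\ip{k_z}{h}_{L^2(\mu)} = 0$ for every $z \in \D$, and density of $\scr{K}_\D$ in $H^2(\mu)$ forces $h=0$. Thus $\scr{C}_\mu$ is unitary and $\scr{H}^+(\mu)$ is a Hilbert space. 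That its elements are holomorphic in $\D$ is Lemma \ref{lemholo}.

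Next I would verify the RKHS property and identify the kernel. For any $z \in \D$ and $h \in H^2(\mu)$, the formula $(\scr{C}_\mu h)(z) = \ip{k_z}{h}_{L^2(\mu)}$ together with the unitarity of $\scr{C}_\mu$ gives
\[
(\scr{C}_\mu h)(z) \; = \; \ip{k_z}{h}_{L^2(\mu)} \; = \; \ip{\scr{C}_\mu k_z}{\scr{C}_\mu h}_\mu,
\]
so point evaluation at $z$ is bounded on $\scr{H}^+(\mu)$ and is implemented by the vector $k^\mu_z := \scr{C}_\mu k_z \in \scr{H}^+(\mu)$. Hence $\scr{H}^+(\mu)$ is a RKHS with reproducing kernel
\[
k^\mu(z,w) \; = \; \ip{k^\mu_z}{k^\mu_w}_\mu \; = \; \ip{k_z}{k_w}_{L^2(\mu)} \; = \; \int_{\partial \D} \frac{1}{1-z\ov{\zeta}}\,\frac{1}{1-\ov{w}\zeta}\,\mu(d\zeta),
\]
which is the first claimed formula.

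The last step is to rewrite this integral in terms of the Herglotz--Riesz transform $H_\mu$. The key algebraic identity, obtained by common denominators and using $|\zeta|=1$, is
\[
\frac{1+z\ov{\zeta}}{1-z\ov{\zeta}} + \frac{1+\ov{w}\zeta}{1-\ov{w}\zeta} \; = \; \frac{2(1-z\ov{w})}{(1-z\ov{\zeta})(1-\ov{w}\zeta)},
\]
which rearranges to
\[
\frac{1}{(1-z\ov{\zeta})(1-\ov{w}\zeta)} \; = \; \frac{1}{2(1-z\ov{w})}\left(\frac{1+z\ov{\zeta}}{1-z\ov{\zeta}} + \frac{1+\ov{w}\zeta}{1-\ov{w}\zeta}\right).
\]
Integrating against $\mu$ and recognizing that $\int \frac{1+z\ov{\zeta}}{1-z\ov{\zeta}}\mu(d\zeta) = H_\mu(z)$ and $\int \frac{1+\ov{w}\zeta}{1-\ov{w}\zeta}\mu(d\zeta) = \ov{H_\mu(w)}$ yields the second claimed formula. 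There is no real obstacle here; the only subtle point is the aforementioned density/injectivity argument ensuring $\scr{C}_\mu$ is unitary so that $\scr{H}^+(\mu)$ is actually complete, and checking that $k_z$ does lie in $H^2(\mu)$ so that $\scr{C}_\mu k_z$ is meaningful as an element of $\scr{H}^+(\mu)$.
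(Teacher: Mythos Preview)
Your proposal is correct and follows essentially the same approach as the paper: you establish well-definedness/injectivity via density of the Szeg\"o kernels, identify $k^\mu_z = \scr{C}_\mu k_z$ as the point-evaluation vectors, and derive the Herglotz formula from the same algebraic identity for the integrand. The only cosmetic difference is that the paper computes $H_\mu(z)+\ov{H_\mu(w)}$ directly and recognizes $2(1-z\ov{w})k^\mu(z,w)$, whereas you rearrange the pointwise identity first and then integrate.
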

By construction, $\scr{C} _\mu$ is an isometry of $H^2 (\mu )$ onto $\scr{H} ^+ (\mu )$.
\begin{proof}
To show that this inner product is well-defined, we need to check that $\scr{C} _\mu h \equiv 0$ in the disk implies that $h=0$ in $H^2 (\mu)$. Indeed,
$$ (\scr{C} _\mu h) (z) = \ip{k_z}{h}_{H^2 (\mu )}, $$ and since $\bigvee k_z$ is dense in $A (\D)$, the linear span of the Szeg\"o kernels is also dense in $H^2 (\mu)$ as described above. Hence this vanishes for all $z \in \D$ if and only if $h=0$. 

By definition, for any $z \in \D$,
$$ (\scr{C} _\mu h) (z) = \ip{k_z}{h}_{H^2 (\mu )} = \ip{\scr{C} _\mu k_z}{\scr{C} _\mu h}_\mu, $$ so that $\scr{H} ^+ (\mu )$ is a RKHS in $\D$ with kernel vectors $k ^\mu _z := \scr{C} _\mu k_z$ and reproducing kernel
$$ k^\mu (z,w) := \ip{\scr{C} _\mu k_z}{\scr{C} _\mu k_w}_{L^2 (\mu )} = \int _{\partial \D}  \frac{1}{1 -z \ov{\zeta}} \frac{1}{1-\ov{w} \zeta} \mu (d\zeta ). $$ Finally, 
\ba H_\mu (z) + \ov{H_\mu (w)} & = & \int _{\partial \D} \frac{1 +z\ov{\zeta}}{1-z\ov{\zeta}} \mu (d\zeta ) + \int _{\partial \D} \frac{1 +\ov{w} \zeta}{1-\ov{w}\zeta} \mu (d\zeta ) \\
& = & 2 (1 - z \ov{w} ) \int _{\partial \D} \frac{1}{1-z \ov{\zeta}} \frac{1}{1-\ov{w} \zeta} \mu (d\zeta ), \ea establishing the second formula.
\end{proof}

\begin{eg}[Hardy space]
If $\mu =m$ is normalized Lebesgue measure, then,
\ba H_m (z)  & := & \int _{\partial \D} \frac{1 + z \ov{\zeta}}{1 -z \ov{\zeta}} m (d\zeta ) \\
& = & \int _{\partial \D} \frac{2}{1 -z \ov{\zeta}} m (d\zeta ) - m (\partial \D) \\
& = & \sum _{j=0} ^\infty z^j  \int _{\partial \D} \ov{\zeta} ^j m (d\zeta ) - \int _{\partial \D} m (d\zeta ) \\
& = & \sum _{j=0} ^\infty z^j  \frac{1}{2\pi} \int _0  ^{2\pi} e^{-ij\theta} d\theta - \frac{1}{2\pi} \int _0 ^{2\pi} d\theta = 2 \delta _{j,0} - 1 =1. \ea
It follows that $b_m := \frac{H_m -1}{H_m +1} \equiv 0$, so that $m = \mu _0$ is the Clark measure of the identically $0$ function. 
Moreover, 
$$ k^m (z,w) = \frac{1}{2} \frac{H_m (z) + \ov{H_m (w)}}{1-z\ov{w}} = \frac{1}{1-z\ov{w}} = k(z,w), $$ is the \emph{Szeg\"o kernel}. This is the reproducing kernel for the classical Hardy space $H^2 = H^2 (\D )$, of square--summable Taylor series in the complex unit disk, equipped with the $\ell ^2-$inner product of the Taylor coefficients. That is, $\scr{H} ^+ (m) = H^2$.
\end{eg}

Since any $h := \scr{C} _\mu g \in \scr{H} ^+ (\mu )$ is holomorphic in the open unit disk, its Taylor series at $0$ has radius of convergence at least one,
$$ h(z) = \sum _{j=0} ^\infty \hat{h} _j z^j. $$ Moreover, expanding $k_z (\zeta)$ in a convergent geometric sum,
\ba h(z) = \sum _{j=0} ^\infty \hat{h} _j z^j & = & \ip{k_z}{g}_{L^2 (\mu ) } = \sum _{j=0} ^\infty z^j \ip{\zeta ^j}{g}_{L^2 (\mu )} \\
& = & \sum _{j=0} ^\infty z^j \ip{\scr{C} _\mu \zeta ^j}{h}_\mu, \ea and it follows that the Taylor coefficients are given by 
$$ \hat{h} _j = \ip{\scr{C} _\mu \zeta ^j}{h}_\mu; \quad \quad j \in \N \cup \{ 0 \}. $$ That is, for any $j \in \N \cup \{ 0 \}$, the linear functionals $\ell _j (h) = \hat{h} _j$ are bounded on $\scr{H} ^+ (\mu )$ and are implemented by inner products against the Taylor coefficient kernel vectors $k ^\mu _j := \scr{C} _\mu \zeta ^j$. Hence $\scr{H} ^+ (\mu )$ is a Taylor coefficient RKHS in $\D$ with coefficient reproducing kernel, $\hat{k} ^\mu (i,j)$, on the set $\N \cup \{ 0 \}$, 
$$ \hat{k} ^\mu (i,j) := \ip{k^\mu _i}{k^\mu _j}_{\mu}, $$ and $\hat{k} ^\mu$ is then a positive kernel function on $\N \cup \{ 0 \}$.

Given a positive measure $\mu$, let $V_\mu := \scr{C} _\mu Z^\mu \scr{C} _\mu ^*$. This is an isometry on $\scr{H} ^+ (\mu )$ that will play a central role in our analysis. This operator has a natural action on kernel vectors:
\be V _\mu k^\mu _z \ov{z} = k^\mu _z - k^\mu _0, \quad \mbox{and} \quad V_\mu ^* (k^\mu _z - k^\mu _0) = k^\mu _z \ov{z}. \ee In particular,
$$ \bigvee _{z \in \D} (k_z ^\mu - k_0 ^\mu ) = \nbran V_\mu, $$ where, here, $\bigvee$ denotes closed linear span. It is easy to check that a function, $h \in \scr{H} ^+ (\mu )$, is orthogonal to $\nbran V_\mu$ if and only if $h = c 1$, $c \in \C$, is constant in the disk. Hence the following statements are equivalent:
\bi
    \item[(i)] $\mu$ is extreme,
    \item[(ii)] $H^2 (\mu ) = L^2 (\mu )$,
    \item[(iii)] $Z^\mu = M^\mu _\zeta | _{H^2 (\mu )}$ and hence $V _\mu$ is unitary,
    \item[(iv)] $\scr{H} ^+ (\mu )$ does not contain the constant functions.
\ei

\begin{lemma} \label{bwshift}
Given any finite, positive and regular Borel measure, $\mu$, on $\partial \D$, the co-isometry $V_\mu ^*$, acts as a backward  shift on $\scr{H} ^+ (\mu )$, \emph{i.e.} if $h \in \scr{H} ^+ (\mu)$, $h(z) = \sum _{j=0} ^\infty \hat{h} _j z^j$, then
$$ (V_\mu ^* h) (z) = \sum _{j=0} ^\infty \hat{h}_{j+1} z^j = \frac{h(z) - h(0)}{z}. $$
Given any $h \in \scr{H} ^+ (\mu )$,
$$ (V_\mu h) (z) = z h(z) + (V_\mu h) (0) 1. $$
\end{lemma}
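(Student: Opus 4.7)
The plan is to reduce everything to a single computation: the action of $V_\mu$ on the Taylor coefficient kernel vectors $k_j^\mu = \scr{C}_\mu \zeta^j \in \scr{H}^+(\mu)$. Recall from the discussion preceding this lemma that these vectors implement Taylor coefficient evaluation via $\hat h_j = \ip{k_j^\mu}{h}_\mu$. Because $\scr{C}_\mu : H^2(\mu) \to \scr{H}^+(\mu)$ is an onto isometry, $\scr{C}_\mu^* \scr{C}_\mu = I_{H^2(\mu)}$, and a one-line computation yields
\[
V_\mu k_j^\mu \;=\; \scr{C}_\mu Z^\mu \scr{C}_\mu^* \scr{C}_\mu \zeta^j \;=\; \scr{C}_\mu \zeta^{j+1} \;=\; k_{j+1}^\mu.
\]
This is the crux of the argument: $V_\mu$ shifts the coefficient kernels up by one, and hence, dually, $V_\mu^*$ shifts them down. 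Both claims of the lemma are reformulations of this single identity.

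For the first assertion, I would apply $\ip{k_j^\mu}{\cdot}_\mu$ to $V_\mu^* h$ to obtain
\[
\widehat{V_\mu^* h}_j \;=\; \ip{k_j^\mu}{V_\mu^* h}_\mu \;=\; \ip{V_\mu k_j^\mu}{h}_\mu \;=\; \ip{k_{j+1}^\mu}{h}_\mu \;=\; \hat h_{j+1},
\]
which is the claimed backward-shift of Taylor coefficients. Summing the resulting power series (which converges on $\D$ since $V_\mu^* h \in \scr{H}^+(\mu)$ is holomorphic there) gives, for $z \neq 0$,
\[
(V_\mu^* h)(z) \;=\; \sum_{j=0}^\infty \hat h_{j+1} z^j \;=\; \frac{1}{z}\sum_{k=1}^\infty \hat h_k z^k \;=\; \frac{h(z) - h(0)}{z},
\]
using $\hat h_0 = h(0)$; the apparent singularity at the origin is removable.

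For the second assertion, I would apply the identity just proved to $g := V_\mu h$. Since $V_\mu$ is an isometry, $V_\mu^* V_\mu = I$, and so $\hat g_{j+1} = \widehat{V_\mu^* g}_j = \hat h_j$ for every $j \geq 0$. Reading off the power series of $g$ then gives
\[
(V_\mu h)(z) \;=\; \hat g_0 + \sum_{j=1}^\infty \hat g_j z^j \;=\; (V_\mu h)(0) + z \sum_{j=0}^\infty \hat h_j z^{j+1-1} \;=\; (V_\mu h)(0) + z h(z),
\]
as claimed. There is no genuine obstacle: once the identity $V_\mu k_j^\mu = k_{j+1}^\mu$ is in hand, every remaining step is bookkeeping with Taylor series. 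The only very minor subtlety is the removable singularity at $z = 0$ in $(h(z)-h(0))/z$, which is automatic from the coefficient-wise identity.
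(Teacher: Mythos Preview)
Your argument is correct. The paper actually omits the proof entirely, stating only that the lemma ``is easily verified,'' so there is no approach to compare against; your computation via the identity $V_\mu k_j^\mu = k_{j+1}^\mu$ is exactly the sort of direct verification the authors have in mind.
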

Given any $h$ in the classical Hardy space, $H^2 = \scr{H} ^+ (m)$, one can check that $S := V_m = M_z$ is the isometry of multiplication by the independent variable, $z$, on $H^2$, the \emph{shift}. In this case, adjoint of $S$ is called the backward shift and acts as 
$$ (S^*h) (z) = \frac{h(z) - h(0)}{z}. $$ It is straightforward to verify that if $h \in H^2$ has Taylor series $h(z) = \sum \hat{h} _j z^j$, then
$(S^* h) (z) = \sum _{j=0} ^\infty \hat{h} _{j+1} z^j$. This motivates the terminology `backward shift' in the above lemma statement. This lemma is easily verified and we omit the proof. 

\section{Absolute continuity in the reproducing kernel sense}
\label{sec:RKac}
 Recall that given positive measures $\mu$ and $\la$, we say that $\mu$ is \emph{dominated} by $\la$ if there is a $t>0$ so that $\mu \leq t^2 \la$, and we say that $\mu$ is \emph{reproducing kernel} or \emph{RK-dominated} by $\la$, if $\scr{H} ^+ (\mu ) \subseteq \scr{H} ^+ (\la )$ and there is a $t>0$ so that the norm of the embedding $\mr{e} _{\mu, \la} : \scr{H} ^+ (\mu ) \hookrightarrow \scr{H} ^+ (\la )$ is at most $t$, written $\mu \leq _{RK} t^2 \la$. We will begin this section by showing that these two definitions of domination are equivalent.

\begin{thm} \label{RKdominate}
Given positive, finite and regular Borel measures $\mu, \la$ on the unit circle, $\mu \leq t^2 \la$ for some $t>0$ if and only if $\mu \leq _{RK} t^2 \la$. 
\end{thm}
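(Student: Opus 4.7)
The plan is to prove both implications separately, with only the reverse being non-trivial. For the forward direction, if $\mu \leq t^2\la$ then $t^2\la - \mu$ is a positive, finite and regular Borel measure on $\partial\D$, and the integral formula (\ref{repkernel}) applied to this measure identifies $t^2 k^\la - k^\mu$ as the reproducing kernel of the associated Cauchy transform space $\scr{H}^+(t^2\la - \mu)$; in particular $t^2 k^\la - k^\mu \geq 0$ as a positive kernel function on $\D \times \D$, so $\mu \leq_{RK} t^2\la$.

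For the reverse direction, suppose $k^\mu \leq t^2 k^\la$. Unpacking positive semi-definiteness of $t^2 k^\la - k^\mu$ against a finite configuration $z_1,\ldots,z_n \in \D$ with coefficients $c_1,\ldots,c_n \in \C$, and rewriting using the integral representation (\ref{repkernel}) together with the identity $\overline{k_z(\zeta)} = (1-z\bar\zeta)^{-1}$ for $\zeta \in \partial\D$, the kernel inequality amounts to
$$\int_{\partial\D} \Bigl|\sum_{i=1}^n c_i k_{z_i}(\zeta)\Bigr|^2 \mu(d\zeta) \leq t^2 \int_{\partial\D} \Bigl|\sum_{i=1}^n c_i k_{z_i}(\zeta)\Bigr|^2 \la(d\zeta).$$
As noted in Section \ref{CTsec}, the linear span $\scr{K}_\D$ of the Szeg\"o kernels is sup-norm dense in the disk algebra $A(\D)$, and on $A(\D)$ both the $L^2(\mu)$ and $L^2(\la)$ norms are dominated by the supremum norm times the square root of the total mass. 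Extending the displayed inequality by continuity therefore yields $\int |f|^2 \, d\mu \leq t^2 \int |f|^2 \, d\la$ for every $f \in A(\D)$, and in particular for every analytic polynomial $p \in \C[\zeta]$.

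To upgrade this statement about $|p|^2$ with $p$ analytic into an inequality against arbitrary non-negative continuous test functions, the idea is to invoke the Fej\'er--Riesz factorization theorem: every non-negative trigonometric polynomial $T$ on $\partial\D$ admits a representation $T(\zeta) = |p(\zeta)|^2$ with $p \in \C[\zeta]$. Hence $\int T \, d\mu \leq t^2 \int T \, d\la$ for every non-negative trigonometric polynomial $T$. Since such polynomials are uniformly dense in the cone of non-negative elements of $\scr{C}(\partial\D)$ (approximate any such $g$ in supremum norm by a trigonometric polynomial via Weierstrass, then add an arbitrarily small constant to restore non-negativity), the inequality extends to $\int g \, d\mu \leq t^2 \int g \, d\la$ for every non-negative $g \in \scr{C}(\partial\D)$. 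The Riesz representation theorem for $\scr{C}(\partial\D)^*$ then identifies $t^2\la - \mu$ with a positive, finite and regular Borel measure, i.e.\ $\mu \leq t^2\la$.

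The main conceptual obstacle is precisely the passage from the \emph{analytic} testing that the kernel inequality directly allows (squared absolute values of finite $A(\D)$-combinations) to the \emph{full} positivity needed on all of $\scr{C}(\partial\D)$. Fej\'er--Riesz is the tool that bridges this gap; everything else is density, continuity of integration, and the Riesz representation theorem.
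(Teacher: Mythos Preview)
Your proof is correct and follows essentially the same route as the paper's first proof of sufficiency: test the kernel inequality against finite analytic combinations to get $\int |g|^2\,d\mu \leq t^2\int |g|^2\,d\la$ for $g$ in a dense subset of $A(\D)$, invoke Fej\'er--Riesz to pass to all non-negative trigonometric polynomials, then use density in the positive cone of $\scr{C}(\partial\D)$ and Riesz--Markov. The only cosmetic differences are that the paper tests with monomials (Taylor coefficient kernels) rather than Szeg\"o kernels, and argues density of the positive cone via Ces\`aro sums of the Fej\'er kernel rather than your ``Weierstra\ss\ plus a small constant'' trick; both variants are equally valid.
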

\begin{proof}
\noindent (\emph{Necessity.}) If $\mu \leq t^2 \la$, then $\ga := t^2 \la - \mu$ is a positive measure and 
\ba t^2 k^\la (z,w) - k^\mu (z,w) & = & t^2 \int _{\partial \D} \frac{1}{1-z\ov{\zeta}}\frac{1}{1-\ov{w}\zeta} \la (d\zeta ) - \int _{\partial \D} \frac{1}{1-z\ov{\zeta}}\frac{1}{1-\ov{w}\zeta} \mu (d\zeta ) \\
& = & \int _{\partial \D} \frac{1}{1-z\ov{\zeta}}\frac{1}{1-\ov{w}\zeta} \ga (d\zeta ) = k^\ga (z,w). \ea 
It follows that $t^2 k^\la - k^\mu = k^\ga$ is a positive kernel so that $k^\mu \leq t^2 k^\la$. \\

\noindent \emph{First proof of sufficiency.} Conversely, suppose that $K:= t^2 k ^\la - k^\mu \geq 0$ is a positive kernel. View the analytic polynomials, $\C [ \zeta ]$, as a dense subspace of the disk algebra $A (\D )$, embedded isometrically in the Banach space $\scr{C} (\partial \D )$. For any finite, positive and regular Borel measure on the complex unit circle, $\mu$, we define the positive linear functional, $\hat{\mu}$ on $\scr{C} (\partial \D)$ by 
$$ \hat{\mu} (f) := \int _{\partial \D} f d\mu. $$ (The map $\mu \mapsto \hat{\mu}$ is a bijection, by the Riesz--Markov theorem.) We then define a bounded linear functional, $\ell_K$ on $\scr{C} (\partial \D)$ by $\ell _K := t^2 \hat{\la} - \hat{\mu}$. 

By \mbox{Weierstra\ss}  approximation, $\C [ \zeta ] + \ov{\C [ \zeta ] }$ is supremum-norm dense in the continuous functions, $\scr{C} (\partial \D )$. Since the Fej\'er kernel is positive semi-definite, the partial Ces\`aro sums of any positive semi-definite $f \in \scr{C} (\partial \D )$ will be a positive trigonometric polynomial, \emph{i.e.} a positive semi-definite element of $\C [\zeta ] + \ov{\C [\zeta ]}$ and, by Fourier theory, it follows that the positive cone of $\C [ \zeta ] + \ov{\C [ \zeta ] }$ is supremum norm-dense in the positive cone of $\scr{C} (\partial \D)$. Moreover, by the Fej\'er--Riesz theorem, any positive trigonometric polynomial, $p + \ov{q} \geq 0$, on $\partial \D$ factors as $|g| ^2$ for an analytic $g \in \C [\zeta ]$ (and necessarily, $p =q$, $\mr{deg} (p) = \mr{deg} (g)$). Hence, to check that $\ell _K$ is a positive linear functional on $\scr{C} (\partial \D )$, it suffices to check that $\ell _K ( p + \ov{p} ) \geq 0$ for any $p + \ov{p} = |g| ^2 \geq 0$, $p, g \in \C [ \zeta ]$. If $p = \sum _{j=0} ^n \hat{p} _j \zeta ^j$ and $g = \sum _{j=0} ^n \hat{g} _j \zeta ^j$, then by construction
$$ \ip{\scr{C} _\mu g}{\scr{C} _\mu g}_\mu = \ip{g}{g}_{H^2 (\mu) } = \ip{p}{1}_{H^2 (\mu)} + \ip{1}{p}_{H^2 (\mu )} = \ip{\scr{C} _\mu p}{\scr{C} _\mu 1}_\mu + \ip{\scr{C} _\mu 1}{\scr{C} _\mu p}_{\mu}. $$
Since
$$ \scr{C} _\mu p = \sum _{j=0} ^n \hat{p} _j k ^\mu _j,$$ where the $k^\mu _j$, $j \in \N \cup \{ 0 \}$ are the Taylor coefficient evaluation vectors, and since similar formulas hold for $\la$, we obtain that 
$$ \ell _K (|g| ^2) = \int _{\partial \D} |g (\zeta ) | ^2 (t^2 \la (d\zeta ) - \mu (d\zeta ) )  =  \sum _{i,j=0} ^n \ov{\hat{g} _i} \hat{g} _j \ip{K_i}{K_j}_{\cH (K)}, $$ where the $K_i$, $i \in \N \cup \{ 0 \}$ are the Taylor coefficient evaluation vectors in $\cH (K)$. Namely, $\cH (K)$ is also a Taylor coefficient RKHS in $\D$ so that $\hat{K} (i , j) := \ip{K_i}{K_j}_{\cH (K)}$ defines a positive kernel function on the set $\N \cup \{ 0 \}$. It follows that 
$$ \int _{\partial \D} |g (\zeta ) | ^2 (t^2 \la (d\zeta ) - \mu (d\zeta ) ) \geq 0, $$ for any $g \in \C [ \zeta]$, or, equivalently, 
$$ \ell _K (p + \ov{p}) = \int _{\partial \D} (p + \ov{p} ) (t^2 d\la - d\mu ) \geq 0, $$ for any positive semi-definite $p + \ov{p} \in \C [\zeta] + \ov{\C [ \zeta ]}$. By density of the positive cone of $\C [\zeta ] + \ov{\C [\zeta ] }$ in the positive cone of the continuous functions, it follows that $\ell _K$ is a bounded, positive linear functional on $\scr{C} (\partial \D )$, with norm $\| \ell _K \| = \ell _K (1) = t^2 \la (\partial \D ) - \mu (\partial \D ) = K( 0, 0) \geq 0$. By the Riesz--Markov theorem, there is then a unique, positive, finite and regular Borel measure, $\ga$, on $\partial \D$, so that 
$$ \ell _K (f) = \int _{\partial \D} f(\zeta ) \ga (d\zeta ), $$ for any $f \in \scr{C} (\partial \D )$, \emph{i.e.} $\ell _K = \hat{\ga}$, and we conclude that $\ga = t^2 \la - \mu \geq 0$ so that $t^2 \la \geq \mu$. \\

\noindent \emph{Second proof of sufficiency.} If $t^2 k^\la \geq k^\mu$, then by Aronszajn's inclusion theorem, $\scr{H} ^+ (\mu ) \subseteq \scr{H} ^+ (\la )$ and the norm of the embedding $\mr{e} _{\mu , \la } : \scr{H} ^+ (\mu ) \hookrightarrow \scr{H} ^+ (\la )$ is at most $t>0$. Observe that $\mr{e} := \mr{e} _{\mu , \la}$ acts trivially as a multiplier by the constant function $1$, so that 
$$ \mr{e} ^* k ^\la _z = k^\mu _z, \quad \mbox{and} \quad \mr{e} ^* k^\la _j = k^\mu _j, $$ for any $z \in \D$ and $j \in \N \cup \{ 0 \}$. Hence,
\ba \mr{e} ^* V^\la k^\la _z \ov{z} & = & \mr{e} ^* ( k^\la _z - k ^\la _0 ) \\ 
& = & k^\mu _z - k ^\mu _0 = V^\mu k^\mu _z \ov{z} \\
& = & V^\mu \mr{e} ^* k^\la _z \ov{z}, \ea so that $\mr{e} ^*$ intertwines $V^\la$ with $V^\mu$, $\mr{e} ^* V^\la = V^\mu \mr{e} ^*$. Setting $E := \scr{C} _\mu ^* \mr{e} ^* \scr{C} _\la$, we see that for any monomial,
\ba E \zeta ^j & = & \scr{C} _\mu ^* \mr{e} ^* k^\la _j \\
& = & \scr{C} _\mu ^* k^\mu _j = \zeta ^j \in H^2 (\mu ), \ea so that $E : H^2 (\la ) \hookrightarrow H^2 (\mu)$ obeys $Ep =p$ for any $p \in \C [\zeta ] \subseteq H^2 (\la )$. In particular, $E Z^\la = Z^\mu E$. At this point one could argue using the Riesz--Markov theorem as above, however, here is an alternative argument. Since $Z^\la$ and $Z^\mu$ are contractions (they are isometries), we can apply the intertwining version of the commutant lifting theorem \cite[Corollary 5.9]{Paulsen} to conclude that $E$ can be `lifted' to a bounded operator $\hat{E} : L^2 (\la ) \rightarrow L^2 (\mu )$, with norm $\| \hat{E} \| = \| E \|$, so that $\hat{E} M^\la _\zeta = M^\mu _\zeta \hat{E}$, and $P _{H^2 (\mu )} \hat{E} | _{H ^2 (\la )} = E$. Setting $\hat{T} := \hat{E}^* \hat{E} \in \scr{L} (L ^2 (\la ) )$, $\hat{T} \geq 0$ is a positive semi-definite $M^\la _\zeta-$\emph{Toeplitz operator} in the sense that 
$$ M^{\la *} _\zeta \hat{T} M^\la _\zeta = \hat{E} ^* M^{\mu *} _\zeta M^\mu _\zeta \hat{E} = \hat{T}, $$ since $M ^\la _\zeta, M^\mu _\zeta$ are isometries. Since, $M^\la _\zeta$ (and $M^\mu _\zeta$) is in fact unitary, it follows that $\hat{T}$ commutes with $M^\la _\zeta$. Since $\C [\zeta ] + \ov{\C [\zeta ]} \subseteq \nbran M^\la _\zeta$ is a dense set in $L^2 (\la )$, a simple argument shows that $\hat{T}$ acts as multiplication by $f:=T1 \in L^2 (\la)$. However, since $T = M_f$ is a bounded and positive semi-definite operator, it is easy to check that $t ^2 \geq \| \hat{T} \| = \| f \| _{\infty }$, $\| \hat{T} \| = \| \hat{E} \| ^2 = \| E \| ^2 = \| T \|$, and $f \geq 0$ $\la-$a.e. Finally, one can also check that $T = P_{H^2 (\la )} \hat{T} | _{H^2 (\la)}$ is $Z^\la-$Toeplitz, \emph{i.e.} $Z^{\la *} T Z^\la =T$. In conclusion, for any $p, q \in \C [\zeta ]$,
\ba \int _{\partial \D} \ov{p (\zeta)} q (\zeta ) \mu (d\zeta ) & = & \ip{p}{q}_{H^2 (\mu )}  =  \ip{Ep}{Eq}_{H^2 (\mu )} \\
& = & \ip{\sqrt{T} p}{\sqrt{T}q}_{H^2 (\la )} = \ip{p}{\hat{T} q}_{L^2 (\la )} \\ 
& = & \ip{p}{M_f q}_{L ^2 (\la )} = \int _{\partial \D} \ov{p (\zeta )} q (\zeta) f(\zeta ) \la (d\zeta ). \ea 
This formula extends to elements of the form $g= \ov{p} + q \in \C [\zeta ] + \ov{\C [\zeta ]}$ since 
$$ \ip{\ov{p} _1 + q_1}{\ov{p_2} + q_2}_{L^2 (\mu )} = \ip{q_1}{q_2}_{H^2 (\mu )} + \ip{q_1 p_2}{1}_{H^2 (\mu)} + \ip{p_2}{p_1}_{H^2 (\mu )} + \ip{1}{p_1 q_2}_{H^2 (\mu )}.$$ Again, by \mbox{Weierstra\ss} approximation, since $\C [\zeta ] + \ov{\C [\zeta ]}$ is supremum-norm dense in $\scr{C} (\partial \D )$, which is in turn dense in $L^2 (\la )$ and $L^2 (\mu )$, it follows that for any $g, h \in L^2 (\la )$,
$$ \int _{\partial \D} \ov{g (\zeta )} h (\zeta ) \mu (d\zeta ) = \int _{\partial \D} \ov{g (\zeta )} h (\zeta ) f(\zeta ) \la (d\zeta ), $$ where $f \geq 0$, $\la-$a.e. and $\| f \| _\infty \leq t^2$. We conclude that $\mu \leq t^2 \la$ and that
$$ f = \frac{\mu (d\zeta )}{\la (d\zeta )}, $$ is the (bounded) Radon--Nikodym derivative of $\mu$ with respect to $\la$. 
\end{proof}

\begin{defn} \label{Toepdef}
Let $T \in \scr{L} (\cH )$ be a bounded operator and let $V$ be an isometry on $\cH$. We say that $T$ is $V-$\emph{Toeplitz} if
$$ V^* T V = T. $$ 

If $\fq \geq 0$ is a positive semi-definite quadratic form with dense form domain, $\nbdom \fq \subseteq \cH$, we say that $\fq$ is $V-$\emph{Toeplitz} if $\nbdom \fq$ is $V-$invariant and 
$$ \fq (Vg , Vh) = \fq (g,h); \quad \quad g,h \in \nbdom \fq. $$
In particular, if $T\geq 0$ is a positive semi-definite, self-adjoint and densely--defined operator in $\cH$, we say that $T$ is $V-$\emph{Toeplitz} if the closed, positive semi-definite form it generates, 
$$ \fq _T (x,y) := \ip{\sqrt{T} x}{\sqrt{T} y}_\cH; \quad \quad x,y \in \nbdom \fq _T = \nbdom \sqrt{T}, $$ is $V-$Toeplitz. If $T\geq 0$ is bounded, this latter definition reduces to the definition of a bounded, positive semi-definite $V-$Toeplitz operator.
\end{defn}

\begin{cor} \label{bddRN}
Let $\mu, \la$ be positive, finite and regular Borel measures on $\partial \D$ so that $\mu \leq t^2 \la$. In this case, 
$$ E_{\mu , \la} := \scr{C} _\mu ^* \mr{e} _{\mu , \la } ^* \scr{C} _\la : H^2 (\la ) \hookrightarrow H^2 (\mu),$$ is a \emph{co-embedding} in the sense that $E _{\mu ,\la} p = p$ for any $p \in \C [\zeta ]$. Moreover,
$$ \mr{e} _{\mu , \la} ^* k ^\la _z = k^\mu _z, \quad \mr{e} _{\mu , \la} ^* k^\la _j = k^\mu _j, \quad \quad \forall \ z \in \D, \ j \in \N \cup \{ 0 \},$$ $\mr{e} _{\mu ,\la } ^* V_\la = V^\mu \mr{e} _{\mu , \la } ^*$ and equivalently $E_{\mu , \la} Z ^\la = Z^\mu E_{\mu , \la}$. If $T_\mu := E _{\mu , \la} ^* E _{\mu ,\la}$, then $T_\mu$ is $Z^\la-$Toeplitz and $T$ is the compression of a bounded multiplication operator, $T = P_{H^2 (\la )} M_f |_{H^2 (\la)}$, where $f \geq 0$ $\la-a.e.$, $\| f \| _{\infty} \leq t^2$, and $f = \frac{\mu (d\zeta) }{\la (d\zeta)}$ is the Radon--Nikodym derivative of $\mu$ with respect to $\la$.
\end{cor}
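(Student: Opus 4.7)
The plan is to extract and organize the pieces of this corollary that are already established in the second proof of sufficiency of Theorem \ref{RKdominate}. Since $\mu \leq t^2 \la$, Aronszajn's inclusion theorem guarantees that $\mr{e} := \mr{e} _{\mu ,\la }$ exists as a contraction with $\| \mr{e} \| \leq t$. My first step is to compute the action of $\mr{e} ^*$ on Szeg\"o kernels. For any $z \in \D$ and $g \in \scr{H} ^+ (\mu )$,
$$ \ip{\mr{e} ^* k^\la _z}{g}_\mu = \ip{k^\la _z}{\mr{e} g}_\la = (\mr{e} g)(z) = g(z) = \ip{k^\mu _z}{g}_\mu, $$
which forces $\mr{e} ^* k^\la _z = k^\mu _z$. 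The same computation applied to $\ip{\mr{e} ^* k^\la _j}{g}_\mu = \widehat{(\mr{e} g)} _j = \hat{g} _j$ yields $\mr{e} ^* k^\la _j = k^\mu _j$ on Taylor coefficient kernels.

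Next, I verify the intertwining relation using the action of $V_\mu$, $V_\la$ on Szeg\"o kernels from Section \ref{CTsec}:
$$ \mr{e} ^* V_\la k^\la _z \ov{z} = \mr{e} ^* ( k^\la _z - k^\la _0 ) = k^\mu _z - k^\mu _0 = V_\mu k^\mu _z \ov{z} = V_\mu \mr{e} ^* k^\la _z \ov{z}. $$
Because $\bigvee _{z \in \D} k^\la _z$ is dense in $\scr{H} ^+ (\la )$, this extends to $\mr{e} ^* V_\la = V_\mu \mr{e} ^*$. Conjugating by the isometries $\scr{C} _\mu$ and $\scr{C} _\la$ gives $E_{\mu , \la} Z^\la = Z^\mu E_{\mu ,\la}$. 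The co-embedding property is then immediate: $E_{\mu ,\la} \zeta ^j = \scr{C} _\mu ^* \mr{e} ^* \scr{C} _\la \zeta ^j = \scr{C} _\mu ^* \mr{e} ^* k^\la _j = \scr{C} _\mu ^* k^\mu _j = \zeta ^j$, and linearity handles all $p \in \C [\zeta ]$. The Toeplitz identity drops out from a single line: $Z^{\la *} T_\mu Z^\la = E^* Z^{\mu *} Z^\mu E = E^* E = T_\mu$, since $Z^\mu$ is an isometry.

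The last assertion, identifying $T_\mu$ with the compression of multiplication by the bounded Radon--Nikodym density, is the only substantive part and is precisely what is done in the second proof of sufficiency of Theorem \ref{RKdominate}: I would lift $E_{\mu ,\la}$ via commutant lifting \cite[Corollary 5.9]{Paulsen} to $\hat{E} : L^2 (\la ) \rightarrow L^2 (\mu )$ intertwining the unitaries $M^\la _\zeta$ and $M^\mu _\zeta$ with $\| \hat{E} \| = \| E \|$, set $\hat{T} := \hat{E} ^* \hat{E}$, observe that $\hat{T}$ commutes with the unitary $M^\la _\zeta$, conclude $\hat{T} = M_f$ where $f := \hat{T} 1 \in L ^\infty (\la )$ with $f \geq 0$ $\la-$a.e.\ and $\| f \| _\infty \leq t^2$, and finally recover $d\mu = f d\la$ by testing the identity $\ip{p}{q}_{H^2 (\mu )} = \ip{p}{M_f q}_{L^2 (\la )}$ against $p , q \in \C [\zeta ]$ and extending by \mbox{Weierstra\ss} approximation. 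The main (and only) obstacle is this commutant lifting step, but since it is already carried out inside the proof of Theorem \ref{RKdominate}, the corollary itself reduces to bookkeeping.
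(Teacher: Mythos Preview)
Your proposal is correct and follows exactly the approach of the paper: the corollary is stated without its own proof precisely because every assertion is established verbatim in the second proof of sufficiency of Theorem \ref{RKdominate}, and you have accurately identified and organized those pieces. The only cosmetic difference is that you spell out the reproducing-kernel computation $\ip{\mr{e}^* k^\la_z}{g}_\mu = g(z)$ explicitly, whereas the paper simply invokes that $\mr{e}$ is multiplication by $1$ and appeals to the general adjoint formula for multipliers.
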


\begin{remark}
While the co-embedding, $E_{\mu, \la} : H^2 (\la ) \hookrightarrow H^2 (\mu)$ always has dense range, it may have non-trivial kernel. For example, if $\la$ is the sum of two Dirac point masses at distinct points $\zeta , \xi \in \partial \D$, $\mu$ is the point mass at $\zeta$, then  $\mu \leq \la$ and if $p$ is any polynomial vanishing at $\zeta$, then $E_{\mu , \la} p =0 \in H^2 (\mu )$. To be precise, $H^2 (\mu )$ is the closure of the disk algebra, $A (\D )$, or the polynomials, $\C [\zeta ]$, in the $L^2 (\mu )-$norm, so that if $p \in \C [\zeta ]$ or $a \in A (\D )$ vanishes on the support of $\mu$, then $p=0=a$ in $H^2 (\mu )$.
\end{remark}

More generally, absolute continuity of positive measures can also be described in terms of their spaces of Cauchy transforms. It is a straightforward exercise, using the Radon--Nikodym formula, to show that $\mu$ is absolutely continuous with respect to $\la$, if and only if one can construct a monotonically increasing sequence of positive measures, $\mu _n \geq 0$, so that $\mu _n \leq \mu$ for all $n$, the $\mu _n \uparrow \mu$ increase monotonically to $\mu$, and there is a sequence of positive constants, $t_n >0$ so that $\mu _n \leq t_n ^2 \la$. Indeed, this can be readily established by taking the `join' or point-wise maxima of $\frac{d\mu }{d\la}$ and the constant functions $t_n ^2 \cdot 1$. Since $\mu _n \leq \mu$ for all $n$, Aronszajn's inclusion theorem implies that $\scr{H} ^+ (\mu _n ) \subseteq \scr{H} ^+ (\mu )$ and that the embeddings $\mr{e} _n : \scr{H} ^+ (\mu _n ) \hookrightarrow \scr{H} ^+ (\mu )$ are all contractive. Moreover, and again by Aronszajn's inclusion theorem, each $\scr{H} ^+ (\mu _n ) \subseteq \scr{H} ^+ (\la )$ is boundedly contained in $\scr{H} ^+ (\la )$ so that 
\be \scr{H} ^+ (\mu _n ) \subseteq  \scr{H} ^+ (\mu ) \cap \scr{H} ^+ (\la )=:\mr{int} (\mu , \la ). \ee

\begin{prop} \label{necessary}
If $\mu \ll \la$ then the intersection space, $\mr{int} (\mu , \la ) = \scr{H} ^+ (\mu ) \cap \scr{H} ^+ (\la )$ is dense in $\scr{H} ^+ (\mu )$.
\end{prop}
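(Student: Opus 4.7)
The plan is to use the increasing sequence of positive measures $\{\mu_n\}$ described in the paragraph immediately preceding the proposition. By construction $\mu_n \leq \mu$ and $\mu_n \leq t_n^2 \la$, so Theorem \ref{RKdominate} supplies a contractive embedding $\mr{e}_n := \mr{e}_{\mu_n, \mu} : \scr{H} ^+(\mu_n) \hookrightarrow \scr{H} ^+(\mu)$ together with a bounded embedding $\scr{H} ^+(\mu_n) \hookrightarrow \scr{H} ^+(\la)$. Hence $\mr{e}_n \scr{H} ^+(\mu_n) \subseteq \mr{int}(\mu, \la)$, and it suffices to prove that $\bigcup_n \mr{e}_n \scr{H} ^+(\mu_n)$ is norm-dense in $\scr{H} ^+(\mu)$.

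I would establish this by showing that the positive contractions $P_n := \mr{e}_n \mr{e}_n^* \in \scr{L}(\scr{H} ^+(\mu))$ converge strongly to the identity; density then follows at once, since for every $h \in \scr{H} ^+(\mu)$ the vector $P_n h = \mr{e}_n(\mr{e}_n^* h)$ lies in $\mr{e}_n \scr{H} ^+(\mu_n)$ and converges to $h$. To analyze $P_n$, I transfer to the $H^2$ picture via the Cauchy transform unitary $\scr{C}_\mu : H^2(\mu) \to \scr{H} ^+(\mu)$: setting $E_n := \scr{C}_{\mu_n}^* \mr{e}_n^* \scr{C}_\mu : H^2(\mu) \to H^2(\mu_n)$ (the co-embedding of Corollary \ref{bddRN}) and $T_n := E_n^* E_n$, one has $\scr{C}_\mu^* P_n \scr{C}_\mu = T_n$, so strong convergence $P_n \to I_{\scr{H} ^+(\mu)}$ is equivalent to strong convergence $T_n \to I_{H^2(\mu)}$.

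Applying Corollary \ref{bddRN} with the pair $(\mu_n, \mu)$ in place of $(\mu, \la)$ identifies $T_n = P_{H^2(\mu)} M_{f_n}|_{H^2(\mu)}$, where $f_n := \frac{d\mu_n}{d\mu} \in L^\infty(\mu)$ satisfies $0 \leq f_n \leq 1$ $\mu$-a.e. Since $\mu_n \uparrow \mu$, these densities form an increasing sequence converging pointwise $\mu$-a.e.\ to $1$, so the dominated convergence theorem yields $f_n h \to h$ in $L^2(\mu)$ for every $h \in L^2(\mu)$, and therefore $T_n h = P_{H^2(\mu)}(f_n h) \to h$ for every $h \in H^2(\mu)$, which is the required strong convergence.

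The only care needed is bookkeeping: (i) checking that the hypotheses of Corollary \ref{bddRN} apply with the roles of $\mu$ and $\la$ played by $\mu_n$ and $\mu$, respectively, (ii) verifying $f_n \uparrow 1$ $\mu$-a.e.\ from $\mu_n \uparrow \mu$ (which follows, together with monotonicity $f_n \leq f_{n+1}$, from $\int_A f_n\, d\mu = \mu_n(A) \to \mu(A) = \int_A 1\, d\mu$ for every Borel $A$), and (iii) tracking the unitary identification between $\scr{H} ^+(\mu)$ and $H^2(\mu)$ under $\scr{C}_\mu$. No genuinely hard step is required once these are in place; the work is essentially a dominated convergence argument packaged through the kernel/Cauchy-transform dictionary.
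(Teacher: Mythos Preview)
Your proposal is correct and follows essentially the same approach as the paper: both use the increasing sequence $\mu_n \uparrow \mu$ with $\scr{H}^+(\mu_n) \subseteq \mr{int}(\mu,\la)$, the Radon--Nikodym derivatives $g_n = d\mu_n/d\mu \uparrow 1$, and a monotone/dominated convergence argument to conclude that $\bigvee_n \scr{H}^+(\mu_n)$ is dense in $\scr{H}^+(\mu)$. The only difference is packaging: the paper directly estimates $\|\scr{C}_\mu p - \mr{e}_n \scr{C}_{\mu_n} p\|_\mu^2 \leq 2\int_{\partial\D}|p|^2(1-g_n)\,d\mu \to 0$ for polynomials $p$, whereas you phrase the same computation as strong convergence $P_n = \mr{e}_n\mr{e}_n^* \to I$ after invoking Corollary~\ref{bddRN} to identify $\scr{C}_\mu^* P_n \scr{C}_\mu$ with the Toeplitz operator $P_{H^2(\mu)} M_{g_n}|_{H^2(\mu)}$.
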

\begin{proof}
We have that for all $n$, $0 \leq \mu _n \leq \mu$ and $\mu _n \uparrow \mu$. Moreover, $\scr{H} ^+ (\mu _n ) \subseteq \mr{int} (\mu , \la )$ for all $n$. If $1 \geq g_n \geq 0, \mu-a.e.$ are the Radon--Nikodym derivatives of the $\mu _n$ with respect to $\mu$, and $p \in \C [\zeta ]$, let $\scr{C} _n := \scr{C} _{\mu _n}$ and let $\mr{e} _n : \scr{H} ^+ (\mu _n ) \hookrightarrow \scr{H} ^+ (\mu )$. Then, 
\ba \| \scr{C} _\mu p - \mr{e} _n \scr{C} _n p \| ^2 _\mu & = & \| \scr{C} _\mu p \| ^2 _\mu -2 \nbre \ip{\scr{C} _\mu p}{\mr{e} _n \scr{C} _n p}_\mu + \| \mr{e} _n \scr{C} _n p \| ^2 _\mu \\ 
& = & \| p \| ^2 _{H^2 (\mu)} - 2 \nbre \ip{E_n p}{p}_{H^2 (\mu _n )} + \| E_n ^* p \| ^2 _{H^2 (\mu)} \\
& \leq & 2 \| p \| ^2 _{H^2 (\mu )} - 2 \| p \| ^2 _{H^2 (\mu _n )} \\
& = & 2 \int _{\partial \D} | p (\zeta ) | ^2 (1 - g_n (\zeta ) ) \mu (d\zeta )  \rightarrow 0, \ea by the Lebesgue monotone convergence theorem. In conclusion, 
$$ \scr{H} ^+ (\mu ) = \bigvee _{n=1} ^\infty \scr{H} ^+ (\mu _n ), $$ where, here, $\bigvee$ denotes closed linear span.  
\end{proof}

This motivates the following definitions:

\begin{defn}
Let $\mu, \la$ be finite, positive and regular Borel measures on $\partial \D$. We say that $\mu$ is \emph{absolutely continuous with respect to $\la$ in the reproducing kernel sense}, $\mu \ll _{RK} \la$, if the intersection space,
$$ \mr{int} (\mu , \la ) = \scr{H} ^+ (\mu ) \cap \scr{H} ^+ (\la )$$ is norm-dense in $\scr{H} ^+ (\mu )$.  \\

We say that $\mu$ is \emph{reproducing kernel singular} with respect to $\la$, written $\mu \perp _{RK} \la$, if the intesection space is trivial, $\mr{int} (\mu , \la ) = \{ 0 \}$.
\end{defn}

By the previous proposition, $\mu \ll \la$ implies that $\mu \ll _{RK} \la$. The main result of this section will be to show that this new `reproducing kernel' definition of absolute continuity is equivalent to the classical one. 

\begin{lemma} \label{closedembed}
If $\mu \ll _{RK} \la$ then the embedding, $\mr{e} _{\mu, \la} : \mr{int} (\mu ,\la ) \subseteq \scr{H} ^+ (\mu ) \hookrightarrow \scr{H} ^+ (\la )$, is closed with dense domain $\mr{int} (\mu , \la )$. In this case, the co-embedding, $E _{\mu ,\la} : \nbdom E _{\mu ,\la} \subseteq H^2 (\la ) \hookrightarrow H^2 (\mu )$, is densely--defined and closed. Both $\C [ \zeta ]$ and $\scr{K} _\D = \bigvee _{z \in \D } k_z$ are cores for $E_{\mu ,\la }$ and $E_{\mu ,\la } k_z = k_z$, $E_{\mu, \la} p = p$ for all $k_z \in \scr{K} _\D$ and $p \in \C [\zeta ]$. The (closed) self-adjoint and positive semi-definite operator, $T_\mu := E_{\mu ,\la} ^* E_{\mu , \la}$ is $Z^\la-$Toeplitz.
\end{lemma}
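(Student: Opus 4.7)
The plan is to deduce everything from Proposition~\ref{closedmult}, Remark~\ref{coeffcore}, and the fact that $\scr{C}_\mu$ and $\scr{C}_\la$ implement unitary equivalences between $H^2(\mu), H^2(\la)$ and $\scr{H}^+(\mu), \scr{H}^+(\la)$ respectively. First, observe that $\mr{e}_{\mu,\la}$ is precisely the multiplication operator $M_1$ between the RKHS $\scr{H}^+(\mu)$ and $\scr{H}^+(\la)$, defined on the set of all $g \in \scr{H}^+(\mu)$ so that $1\cdot g \in \scr{H}^+(\la)$, which is exactly the intersection space $\mr{int}(\mu,\la)$. Since this is the maximal domain of $M_1$, Proposition~\ref{closedmult} gives that $\mr{e}_{\mu,\la}$ is closed; density of the domain is the standing hypothesis $\mu \ll_{RK} \la$.

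Next, since $\mr{e}_{\mu,\la}$ is closed and densely--defined, its Hilbert-space adjoint $\mr{e}_{\mu,\la}^*$ is again closed and densely--defined. The composition $E_{\mu,\la} := \scr{C}_\mu^* \mr{e}_{\mu,\la}^* \scr{C}_\la$ is then closed and densely--defined as well because $\scr{C}_\mu : H^2(\mu) \to \scr{H}^+(\mu)$ and $\scr{C}_\la : H^2(\la) \to \scr{H}^+(\la)$ are (everywhere defined and bounded) unitary isomorphisms. Proposition~\ref{closedmult} asserts that $\bigvee_{z\in\D} k^\la_z$ is a core for $\mr{e}_{\mu,\la}^*$ with $\mr{e}_{\mu,\la}^* k^\la_z = k^\mu_z \cdot \ov{1} = k^\mu_z$, and by Remark~\ref{coeffcore}, the span of the Taylor coefficient kernels $\{k^\la_j\}_{j\in\N\cup\{0\}}$ is also a core, and the convolution formula \eqref{convo} applied to $h=1$ yields $\mr{e}_{\mu,\la}^* k^\la_j = k^\mu_j$. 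Pulling these statements through the unitaries $\scr{C}_\mu^*, \scr{C}_\la^*$ and using $\scr{C}_\la^* k^\la_z = k_z$, $\scr{C}_\la^* k^\la_j = \zeta^j$, gives $E_{\mu,\la} k_z = k_z$, $E_{\mu,\la} \zeta^j = \zeta^j$, and identifies $\scr{K}_\D$ and $\C[\zeta]$ as cores for $E_{\mu,\la}$.

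Finally, to see that $T_\mu := E_{\mu,\la}^* E_{\mu,\la}$ is $Z^\la$-Toeplitz in the form sense of Definition~\ref{Toepdef}, work with the closed form $\fq_{T_\mu}(g,h) := \ip{E_{\mu,\la} g}{E_{\mu,\la} h}_{H^2(\mu)}$ on $\nbdom E_{\mu,\la} = \nbdom \sqrt{T_\mu}$. Since the graph norm of $E_{\mu,\la}$ and the form norm of $\fq_{T_\mu}$ coincide on this domain, a core for $E_{\mu,\la}$ is a form-core for $\fq_{T_\mu}$; in particular $\C[\zeta]$ is a form-core that is manifestly $Z^\la$-invariant. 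For $p,q \in \C[\zeta]$,
\[ \fq_{T_\mu}(Z^\la p, Z^\la q) = \ip{\zeta p}{\zeta q}_{H^2(\mu)} = \ip{Z^\mu p}{Z^\mu q}_{H^2(\mu)} = \ip{p}{q}_{H^2(\mu)} = \fq_{T_\mu}(p,q), \]
because $Z^\mu$ is an isometry. A standard closure argument then extends the identity to all of $\nbdom\sqrt{T_\mu}$: given $g \in \nbdom\sqrt{T_\mu}$ and $p_n \in \C[\zeta]$ with $p_n \to g$ in the form norm, $Z^\la p_n \to Z^\la g$ in $H^2(\la)$ and the above identity shows $\sqrt{T_\mu} Z^\la p_n$ is Cauchy, so by closedness $Z^\la g \in \nbdom \sqrt{T_\mu}$ with $\|\sqrt{T_\mu} Z^\la g\| = \|\sqrt{T_\mu} g\|$; polarization finishes the Toeplitz identity.

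The bookkeeping of cores across the various unitaries is routine; the only mildly delicate point is the last step, where one must be careful that the Toeplitz identity, which is easy on $\C[\zeta]$, survives the passage to the closure of the form, and this is where the identification of a core for $E_{\mu,\la}$ with a form-core for $\fq_{T_\mu}$ does all the real work.
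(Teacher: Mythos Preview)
Your proof is correct and follows essentially the same route as the paper's own argument: both identify $\mr{e}_{\mu,\la}$ with the multiplier $M_1$ on its maximal domain, invoke Proposition~\ref{closedmult} and Remark~\ref{coeffcore} for closedness and the core/kernel-action statements, transport these through the unitaries $\scr{C}_\mu, \scr{C}_\la$, and verify the Toeplitz identity on the $Z^\la$-invariant core $\C[\zeta]$ before extending by closure. Your explicit closure step (via Cauchy sequences in the form norm) is a slightly more detailed spelling-out of what the paper compresses into the remark that $\C[\zeta]$ is a $Z^\la$-invariant core for $\sqrt{T_\mu}$ by polar decomposition.
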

\begin{proof}
Let $\mr{e} := \mr{e} _{\mu ,\la}$ and observe that $\mr{e}$ is, trivially, a multiplier by the constant function $1$ from $\scr{H} ^+ (\mu )$ into $\scr{H} ^+ (\la )$. By Proposition \ref{closedmult} and Remark \ref{coeffcore}, $\mr{e}$ is closed on its maximal domain, $\mr{int} (\mu, \la)$, its adjoint acts as 
$$ \mr{e} ^* k^\la _z = k^\mu _z, \quad \mbox{and} \quad \mr{e} ^* k^\la _j = k^\mu _j, $$ on kernels and coefficient kernels and the linear spans of the point evaluation and Taylor coefficient kernels in $\scr{H} ^+ (\la )$ are both cores for $\mr{e} ^*$. Since $E = E_{\mu, \la} := \scr{C} _\mu ^* \mr{e} ^* \scr{C} _\la$, $E$ is closed, the formulas $E p = p$ for $p \in \C [\zeta ]$ and $E k_z = k_z$ are easily verified, and it further follows that $\scr{K} _\D$ and $\C [\zeta ]$ are cores for $E$.

To check that $T:= T_\mu = E_{\mu ,\la } ^* E_{\mu ,\la} \geq 0$ is $\la-$Toeplitz, consider, for any $p, q \in \C [\zeta ]$,
\ba \ip{\sqrt{T} Z^\la p}{\sqrt{T} Z^\la q}_{H^2 (\la )} & = & \ip{ E \zeta \cdot p}{E \zeta \cdot q}_{H^2 (\mu )} \\
& = & \ip{ \zeta p}{\zeta q}_{H^2 (\mu )} = \ip{Z^\mu p}{Z^\mu q}_{H^2 (\mu )} \\
& = & \ip{p}{q}_{H^2 (\mu )} = \ip{Ep}{Eq}_{H^2 (\mu )} \\
& = & \ip{\sqrt{T} p}{\sqrt{T} q}_{H^2 (\la)}. \ea As the polynomials are a core for $E = E_{\mu ,\la}$, this calculation holds on $\nbdom E = \nbdom \sqrt{T}$. Moreover, since $\nbdom E = \nbdom \sqrt{T}$, by polar decomposition of closed operators, and since $\C [\zeta ]$ and $\scr{K} _\D$ are $Z^\la-$invariant cores for $E$, they are also cores for $\sqrt{T}$, and it follows that $\nbdom \sqrt{T} _\mu = \nbdom E$ is also $Z^\la-$invariant.
\end{proof}

\begin{prop} \label{extreme}
Given $\mu, \la \geq 0$, if $\la$ is extreme, then $\mu \ll _{RK} \la$ if and only if $\mu \ll \la$.
\end{prop}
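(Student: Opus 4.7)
The forward direction $\mu \ll \la \Rightarrow \mu \ll _{RK} \la$ is immediate from Proposition \ref{necessary}, which makes no use of extremity. The substantive content of the proposition is the converse, and I will assume $\mu \ll _{RK} \la$ and that $\la$ is extreme, with the goal of producing a non-negative $f \in L^1(\la)$ for which $\mu = f\la$.

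Extremity of $\la$ enters through the equivalence recorded in Subsection \ref{ss:Clark}: $\la$ extreme $\Leftrightarrow H^2(\la) = L^2(\la)$, so that $Z^\la = M^\la _\zeta$ is a \emph{unitary} multiplication operator on $L^2(\la)$. Lemma \ref{closedembed} already hands me, from the hypothesis $\mu \ll _{RK} \la$, the closed, densely--defined co-embedding $E := E_{\mu,\la} : \nbdom E \subseteq L^2(\la) \to H^2(\mu)$ with $\C[\zeta]$ as a core, $Ep = p$ on polynomials, the intertwining $EZ^\la = Z^\mu E$, and a closed, positive, self-adjoint operator $T_\mu := E^*E$ on $L^2(\la)$ whose quadratic form $\fq_{T_\mu}$ is $Z^\la$--Toeplitz. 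The remaining task is to identify $T_\mu$ with $M_f$ for some non-negative $f \in L^1(\la)$ and then read off $\mu = f\la$.

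The decisive point, and the main obstacle, is to upgrade the form--$Z^\la$--Toeplitz identity to an honest commutation $R_s Z^\la = Z^\la R_s$ for each bounded resolvent $R_s := (sI + T_\mu)^{-1}$, $s > 0$. Once this commutation is secured, $R_s$ lies in the bounded commutant of the unitary $Z^\la = M^\la_\zeta$, which by standard spectral theory is the maximal abelian algebra $\{M_\phi : \phi \in L^\infty(\la)\}$; hence $R_s = M_{g_s}$ for some $g_s \in L^\infty(\la)$ with $g_s > 0$ $\la$--a.e., and $T_\mu = M_f$ with $f := g_s^{-1} - s$ a non-negative, $\la$--a.e.\ finite measurable function independent of $s$. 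My plan to establish the commutation combines Kato's form--resolvent lemma (Lemma \ref{resolvent}) with unitarity: the Toeplitz identity yields the form inequality $Z^{\la*} T_\mu Z^\la \leq T_\mu$, which translates to $Z^{\la*} R_s Z^\la \geq R_s$; the reverse inequality, obtained by exploiting the unitarity of $Z^\la$ together with the adjoint intertwining $Z^{\la*} E^* \subseteq E^* Z^{\mu*}$ extracted from $EZ^\la = Z^\mu E$, then forces equality $Z^{\la*} R_s Z^\la = R_s$.

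Finally, the membership $1 \in \C[\zeta] \subseteq \nbdom \sqrt{T_\mu}$ forces $\int f \, d\la < \infty$, so $f \in L^1(\la)$, and the identity
\baln
\int _{\partial \D} \ov{p(\zeta)} q(\zeta) \, \mu(d\zeta) &= \ip{p}{q}_{H^2(\mu)} = \ip{Ep}{Eq}_{H^2(\mu)} \\
&= \fq_{T_\mu}(p,q) = \int _{\partial \D} \ov{p(\zeta)} q(\zeta) f(\zeta) \, \la(d\zeta),
\ealn
valid for all $p, q \in \C[\zeta]$, shows that $\mu$ and $f\la$ agree on the $\ast$--algebra $\C[\zeta] + \ov{\C[\zeta]}$ of trigonometric polynomials. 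By Weierstra\ss\ approximation this algebra is supremum--norm dense in $\scr{C}(\partial\D)$, so the Riesz--Markov theorem yields $\mu = f\la$ as positive Borel measures, and in particular $\mu \ll \la$.
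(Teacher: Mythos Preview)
Your argument follows the paper's proof almost step for step: invoke Lemma \ref{closedembed} to obtain the closed co-embedding $E$ and the positive self-adjoint $T_\mu = E^*E$ with $Z^\la$--Toeplitz form, use extremity of $\la$ to make $Z^\la = M^\la_\zeta$ unitary, show that the bounded resolvent $(I+T_\mu)^{-1}$ commutes with $M^\la_\zeta$ so that $T_\mu$ is affiliated with the maximal abelian algebra $L^\infty(\la)$, and finish by identifying $T_\mu = M_f$ with $f\in L^1(\la)$ via the density of trigonometric polynomials.

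The one place where your route diverges from the paper's is the justification of the resolvent commutation, and here there is a gap. From the Toeplitz identity you correctly extract the form inequality $U^* T_\mu U \leq T_\mu$ (where $U=Z^\la$), which by Lemma \ref{resolvent} gives $U^* R_s U \geq R_s$. But the ``reverse inequality'' you propose to obtain from the adjoint intertwining $Z^{\la *} E^* \subseteq E^* Z^{\mu *}$ does not follow: that inclusion only tells you $\nbdom E^*$ is $Z^{\mu *}$--invariant, which says nothing about $Z^{\la *}$--invariance of $\nbdom \sqrt{T_\mu} = \nbdom E$. In fact the two natural form inequalities one extracts, $U^*T_\mu U \leq T_\mu$ and $U T_\mu U^* \geq T_\mu$, are equivalent under conjugation by the unitary $U$ and both yield only the single resolvent inequality $U^* R_s U \geq R_s$, never its reverse.

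The paper closes this gap more directly. The Toeplitz identity does not merely give a form \emph{inequality}: because the forms agree on $\nbdom\sqrt{T_\mu}$ and $\nbdom\fq_{U^*T_\mu U} = U^*\nbdom\sqrt{T_\mu} \supseteq \nbdom\sqrt{T_\mu}$, the closed form $\fq_{U^*T_\mu U}$ is a closed \emph{extension} of the closed form $\fq_{T_\mu}$. But $\sqrt{T_\mu}$ is self-adjoint, hence maximally symmetric, so it has no proper symmetric (in particular self-adjoint) extension; equivalently, a closed positive form has no proper closed positive extension. Thus $U^*T_\mu U = T_\mu$ as operators, and the commutation $U R_s = R_s U$ follows immediately from Lemma \ref{resolvent}. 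Replacing your two-inequality paragraph with this maximality argument completes the proof.
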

\begin{proof}
Recall that $\la$ is extreme if and only if $H^2 (\la ) = L^2 (\la )$ so that $Z^\la = M^\la _{\zeta}$. In this case, the self-adjoint $\la-$Toeplitz operator $T_\mu \geq 0$ is Toeplitz with respect to the unitary $M^\la _\zeta $. That is, 
$$ \ip{\sqrt{T_\mu} M^\la _\zeta h}{\sqrt{T_\mu} M^\la _\zeta h}_{H^2 (\la )} =  \ip{\sqrt{T_\mu} h}{\sqrt{T_\mu} h}_{H^2 (\la )}, $$ for all $h \in \nbdom \sqrt{T_\mu}$. Hence the quadratic forms for $(M^\la _\zeta) ^* T _\mu M^\la _\zeta$ and $T_\mu$ are the same. By uniqueness of the unbounded Riesz representation, $(M^\la _\zeta) ^* T_\mu M^\la _\zeta = T_\mu$, so that, by Lemma \ref{resolvent},
$$ ( I + (M^\la _\zeta) ^* T_\mu M^\la _\zeta ) ^{-1} = ( I + T _\mu ) ^{-1}, $$ or, equivalently, 
$$ M^\la _\zeta  (I + T_\mu ) ^{-1} = (I + T_\mu ) ^{-1} M^\la _\zeta. $$ This shows that $T_\mu$, and hence $\sqrt{T} _\mu$ are affiliated to the commutant of the unitary operator $M^\la _\zeta$. Since $\C [\zeta ] \subseteq \nbdom \sqrt{T_\mu}$, we conclude that $\sqrt{T _\mu} = M _{\sqrt{T_\mu} 1}$ acts as multiplication by $\sqrt{T _\mu } 1 =: f \in L^2 (\la )$. Since $\sqrt{T _\mu} \geq 0$, we necessarily have that $f\geq 0$, $\la-a.e.$, and we conclude that for any polynomials $p,q$,
$$ \ip{p}{q}_{H^2 (\mu )} = \ip{\sqrt{T_\mu}p}{\sqrt{T_\mu }q}_{H^2 (\la )} = \int _{\partial \D} \ov{p (\zeta )} q(\zeta ) f (\zeta ) ^2 \la (d\zeta ). $$ As in the proof of sufficiency in Theorem \ref{RKdominate}, we conclude that the above formula holds for any $g,h \in \C [\zeta ] + \ov{\C [\zeta ]}$, which is dense in $\scr{C} (\partial \D )$ and $L^\infty (\mu)$. In particular, the formula holds for all simple functions and characteristic functions of Borel sets. Since $f \in L^2 (\la )$, $f^2 \in L^1 (\la )$ and it follows that 
$$ f^2 = \frac{\mu (d\zeta )}{\la (d\zeta )}, $$ is the Radon--Nikodym derivative of $\mu$ with respect to $\la$.
\end{proof}

To prove that absolute continuity in the reproducing kernel sense is equivalent to absolute continuity in general, we will appeal to B. Simon's Lebesgue decomposition theory for positive quadratic forms in Hilbert space \cite{Simon-forms}, \cite[Supplement to VIII.7]{RnS}. Let $\hat{\cH}  (\fq)$ be the Hilbert space completion of $\nbdom \fq$ with respect to the inner product $\ip{\cdot}{\cdot} _\cH + \fq (\cdot , \cdot )$, and let $\mr{j} _{\fq} : \nbdom \fq \hookrightarrow \hat{\cH} (\fq )$ denote the formal embedding. Further define the co-embedding $E_\fq : \hat{\cH} (\fq )\hookrightarrow \cH$ by 
$$ E _\fq (\mr{j} _\fq (x) ) := x, \quad \quad x \in \nbdom \fq. $$ By construction, $\mr{j} _\fq$ is densely--defined, has dense range, and $E_\fq$ is contractive with dense range in $\cH$. Hence $E_\fq$ extends by continuity to a contraction, also denoted by $E_\fq$, $E_\fq : \hat{\cH} (\fq ) \hookrightarrow \cH$. 

\begin{lemma}
A densely--defined and positive semi-definite quadratic form, $\fq$, in $\cH$, is closeable if and only if $\mr{j} _\fq$ is closeable, or equivalently, if and only if $E_\fq$ is injective.
\end{lemma}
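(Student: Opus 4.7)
I would prove the two equivalences separately. The cornerstone is that $E_\fq \circ \mr{j}_\fq = \mr{id}$ on $\nbdom \fq$, together with the contractivity $\|E_\fq\| \leq 1$ (immediate from $\|x\|_\cH \leq \|x\|_{\fq + \mf{id}} = \|\mr{j}_\fq x\|_{\hat{\cH}(\fq)}$) and the density of $\nbran \mr{j}_\fq$ in $\hat{\cH}(\fq)$.

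First, I would establish $\mr{j}_\fq$ closeable $\iff$ $E_\fq$ injective. For $(\Leftarrow)$, suppose $x_n \in \nbdom \fq$ with $x_n \to 0$ in $\cH$ and $\mr{j}_\fq x_n \to y$ in $\hat{\cH}(\fq)$; apply the continuous map $E_\fq$ to obtain $E_\fq y = \lim E_\fq \mr{j}_\fq x_n = \lim x_n = 0$, and injectivity forces $y = 0$. For $(\Rightarrow)$, given $y \in \nbker E_\fq$, use density of $\nbran \mr{j}_\fq$ to choose $x_n \in \nbdom \fq$ with $\mr{j}_\fq x_n \to y$; then $x_n = E_\fq \mr{j}_\fq x_n \to E_\fq y = 0$ in $\cH$, so closeability of $\mr{j}_\fq$ yields $y = 0$.

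Next, I would prove $\fq$ closeable $\iff$ $\mr{j}_\fq$ closeable. The bridge is that for $(x_n) \subseteq \nbdom \fq$ with $x_n \to 0$ in $\cH$, convergence $\mr{j}_\fq x_n \to y$ in $\hat{\cH}(\fq)$ is equivalent to $(x_n)$ being $\fq$-Cauchy, and under this hypothesis $y=0$ iff $\fq(x_n, x_n) \to 0$; this matches the standard characterization of form-closeability exactly. In the direction $\mr{j}_\fq$ closeable $\Rightarrow$ $\fq$ closeable, I would construct the closure explicitly: let $\bar{\mr{j}}_\fq$ denote the closure and define
$$ \bar{\fq}(x,y) := \ip{\bar{\mr{j}}_\fq x}{\bar{\mr{j}}_\fq y}_{\hat{\cH}(\fq)} - \ip{x}{y}_\cH, \quad x,y \in \nbdom \bar{\mr{j}}_\fq, $$
then verify $\bar{\fq}$ extends $\fq$ and is closed because the graph-norm completeness of $\bar{\mr{j}}_\fq$ transfers directly to $\bar{\fq} + \mf{id}$-completeness. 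Conversely, if $\fq$ has a closed extension $\fq'$, then the $\fq + \mf{id}$-norm is the restriction of the $\fq' + \mf{id}$-norm to $\nbdom \fq$, giving an isometric identification of $\hat{\cH}(\fq)$ with the $\fq' + \mf{id}$-closure of $\nbdom \fq$ inside the complete space $\nbdom \fq'$; the inclusion $\nbdom \fq' \hookrightarrow \hat{\cH}(\fq)$ then furnishes a closed extension of $\mr{j}_\fq$.

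The main obstacle is purely bookkeeping: ensuring the abstract completion $\hat{\cH}(\fq)$ is consistently identified with the concrete $\fq' + \mf{id}$-closure inside a closed extension. Once this isometric identification is in place, the two notions of closeability become the same statement, and the equivalences above are short.
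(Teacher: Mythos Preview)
Your proposal is correct and supplies exactly the details the paper omits: the paper does not prove this lemma at all, stating only that it ``is a straightforward consequence of the definitions'' with a reference to Simon's paper on form decompositions. Your argument is the standard one and is sound; the two key structural facts you isolate---that $E_\fq \circ \mr{j}_\fq$ is the identity on $\nbdom \fq$ and that $E_\fq$ is a contraction with dense domain image---are precisely what makes both equivalences routine.

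One small quibble: in your final sentence the phrase ``the inclusion $\nbdom \fq' \hookrightarrow \hat{\cH}(\fq)$'' has the arrow pointing the wrong way. What you have actually shown is that $\hat{\cH}(\fq)$ embeds isometrically as a closed subspace of $\nbdom \fq'$ (with its $\fq'+\mf{id}$-norm), and under this identification $\mr{j}_\fq$ becomes the map $x \mapsto x$ from $\nbdom \fq \subseteq \cH$ into that subspace; its closure is then the identity on the whole subspace, which is closed because $\fq'+\mf{id}$-convergence dominates $\cH$-convergence. The idea is right, only the direction of the inclusion in your write-up needs fixing.
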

This lemma is a straightforward consequence of the definitions, see also \cite{Simon-forms}.

\begin{thm}[Simon--Lebesgue decomposition of positive forms] \label{formLD}
Let $\fq \geq 0$ be a positive semi-definite quadratic form with dense form domain, $\nbdom \fq$, in a separable, complex Hilbert space, $\cH$. Then $\fq$ has a unique Lebesgue decomposition, $\fq = \fq_{ac} + \fq _s$, where $0 \leq \fq_{ac}, \fq _s \leq \fq$ in the quadratic form sense, $\fq _{ac}$ is the maximal absolutely continuous form less than or equal to $\fq$ and $\fq _s$ is a singular form.  \\

\noindent If $P_s$ denotes the projection onto $\nbker E_\fq$, and $P_{ac} = I - P_s$, then $\fq_{ac}$ is given by the formula,
\be \fq_{ac} (x,y) = \ip{ \mr{j} _\fq (x) }{(P_{ac} - E _\fq ^* E_\fq ) \mr{j} _\fq (y)}_{\hat{\cH} (\fq)} = \ip{ \mr{j} _\fq (x) }{P_{ac} \, \mr{j} _\fq (y)}_{\hat{\cH} (\fq)}  - \ip{x}{y}_{\cH}. \label{SimonRNform} \ee
\end{thm}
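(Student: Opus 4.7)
The plan is to take the right-hand side of (\ref{SimonRNform}) as the \emph{definition} of $\fq_{ac}$, set $\fq_s := \fq - \fq_{ac}$, and verify each asserted property in turn. Writing $P := P_{ac}$ and $Q := P_s$, the identity $E_\fq \mr{j}_\fq (x) = x$ immediately gives $\ip{\mr{j}_\fq (x)}{E_\fq ^* E_\fq \mr{j}_\fq (y)}_{\hat{\cH}(\fq)} = \ip{x}{y}_{\cH}$, which both establishes the equality of the two expressions for $\fq_{ac}$ in (\ref{SimonRNform}) and recasts $\fq_s$ as
\[
\fq_s (x,y) = \ip{Q\mr{j}_\fq (x)}{Q\mr{j}_\fq (y)}_{\hat{\cH}(\fq)},
\]
a manifestly positive semi-definite form; expanding $\|\mr{j}_\fq (x)\|_{\hat{\cH}(\fq)}^2 = \|x\|_\cH^2 + \fq (x,x)$ then gives $\fq_{ac} + \fq_s = \fq$. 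The preliminary estimate for positivity of $\fq_{ac}$ is $\|P\mr{j}_\fq (x)\|_{\hat{\cH}(\fq)} \geq \|x\|_\cH$: since $Q\mr{j}_\fq (x) \in \nbker E_\fq$ one has $E_\fq (P\mr{j}_\fq (x)) = E_\fq \mr{j}_\fq (x) = x$, and contractivity of $E_\fq$ forces the inequality, whence $\fq_{ac} (x,x) \geq 0$.

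To show $\fq_{ac}$ is closeable, I would identify $\hat{\cH}(\fq_{ac})$ with the closed subspace $P\hat{\cH}(\fq) = (\nbker E_\fq)^\perp \subseteq \hat{\cH}(\fq)$ via the assignment $\mr{j}_{\fq_{ac}}(x) \mapsto P\mr{j}_\fq (x)$, which extends to a unitary isomorphism because $\|\mr{j}_{\fq_{ac}}(x)\|^2 = \|x\|^2 + \fq_{ac}(x,x) = \|P\mr{j}_\fq (x)\|_{\hat{\cH}(\fq)}^2$ and $P\mr{j}_\fq (\nbdom \fq)$ is dense in $P\hat{\cH}(\fq)$. Under this identification the co-embedding $E_{\fq_{ac}}$ coincides with the restriction of $E_\fq$ to $(\nbker E_\fq)^\perp$, which is manifestly injective, so by the lemma preceding the theorem $\fq_{ac}$ is closeable.

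Singularity of $\fq_s$ is the first substantive step. Given any closeable positive form $\mf{p}$ with $\mf{p} \leq \fq_s$ and any $x \in \nbdom \fq$, I would exploit the density of $\mr{j}_\fq (\nbdom \fq)$ in $\hat{\cH}(\fq)$ to select $y_n \in \nbdom \fq$ with $\mr{j}_\fq (y_n) \to Q\mr{j}_\fq (x)$. Applying $E_\fq$ forces $y_n \to 0$ in $\cH$, while the Cauchy property in $\hat{\cH}(\fq)$ combined with $\mf{p} \leq \fq_s \leq \fq$ gives $\mf{p}(y_n - y_m , y_n - y_m) \to 0$; closeability of $\mf{p}$ then yields $\mf{p}(y_n , y_n) \to 0$. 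A Cauchy--Schwarz argument for $\mf{p}$ now produces $\mf{p}(x - y_n , x - y_n) \to \mf{p}(x,x)$, while the bound $\mf{p}(x - y_n , x - y_n) \leq \fq_s (x - y_n , x - y_n) = \|Q\mr{j}_\fq (x) - Q\mr{j}_\fq (y_n)\|_{\hat{\cH}(\fq)}^2 \to 0$ forces $\mf{p}(x,x) = 0$, so $\mf{p}$ vanishes on $\nbdom \fq$.

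Finally, for maximality, any closeable $\mf{r}$ with $0 \leq \mf{r} \leq \fq$ induces a contraction $A : \hat{\cH}(\fq) \to \hat{\cH}(\mf{r})$ with $A\mr{j}_\fq (x) = \mr{j}_\mf{r} (x)$ and $E_\mf{r} A = E_\fq$; injectivity of $E_\mf{r}$ forces $\nbker A = \nbker E_\fq = \ran{Q}$, so $A|_{\ran{Q}} = 0$ and
\[
\mf{r}(x,x) = \|A\mr{j}_\fq (x)\|^2 - \|x\|^2 = \|AP\mr{j}_\fq (x)\|^2 - \|x\|^2 \leq \|P\mr{j}_\fq (x)\|^2 - \|x\|^2 = \fq_{ac}(x,x).
\]
Uniqueness of the decomposition then reduces to showing that for any alternate splitting $\fq = \fq_{ac}' + \fq_s'$, the nonnegative difference $\fq_{ac} - \fq_{ac}' = \fq_s' - \fq_s \leq \fq_s'$ is itself closeable, whence singularity of $\fq_s'$ forces it to vanish. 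I expect this last step to be the main technical obstacle: the inheritance of closeability under subtraction of a smaller closeable form is the delicate point resolved by Simon in \cite{Simon-forms}, and in our reproducing kernel context it can be handled either via Simon's variational characterization of the absolutely continuous part or by applying the decomposition internally to $\fq_{ac}$ itself.
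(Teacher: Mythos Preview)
The paper does not prove Theorem \ref{formLD}; it is quoted from Simon \cite{Simon-forms} (see also \cite[Supplement to VIII.7]{RnS}) and used as a black box in what follows, notably in the proof of Theorem \ref{RNformula}. There is thus no ``paper's own proof'' to compare against.

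Your sketch is nonetheless a faithful reconstruction of Simon's original argument: the projection formula as the definition of $\fq_{ac}$, positivity via contractivity of $E_\fq$, the identification of $\hat{\cH}(\fq_{ac})$ with $(\nbker E_\fq)^\perp$ to obtain closeability, and the approximation and contraction arguments for singularity and maximality are all correct and essentially how Simon proceeds. You are also right to flag uniqueness as the one point requiring extra care --- closeability of the difference $\fq_{ac} - \fq_{ac}'$ does not follow automatically from closeability of each term separately, and Simon handles this via his monotone--limit characterization of $\fq_{ac}$ rather than by the subtraction you propose.
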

In the above theorem statement, recall that we defined the notions of an absolutely continuous or singular positive quadratic form in the introduction. Namely, a positive semi-definite and densely--defined quadratic form, $\fq : \nbdom \fq \times \nbdom \fq \rightarrow \cH$, $\nbdom \fq \subseteq \cH$, is called absolutely continuous if it is closeable, and singular if the only absolutely continuous and positive semi-definite form it dominates is the identically zero form.

\begin{remark} \label{measform}
If, now, $\mu, \la \geq 0$ are measures on the circle, we can take $\cH := L^2 (\la )$ or $H^2 (\la)$, and define $\fq _\mu \geq 0$ on a dense form domain in $\cH$ by the formula 
$$ \fq_\mu (f,g) = \int _{\partial \D} \ov{f(\zeta)} g(\zeta ) \mu (d\zeta ). $$ For example, if $\cH = L^2 (\la )$, one can take $\nbdom \fq_\mu = \scr{C} (\partial \D )$, the continuous functions. In this case, by the remark on \cite[p. 381]{Simon-forms}, the quadratic form Lebesgue decomposition of $\fq_\mu$ coincides with the classical Lebesgue decomposition of $\mu$ with respect to $\la$. Namely, in this case, the absolutely continuous part of $\fq_\mu$, $\fq_{\mu ; ac}$ is equal to $\fq_{\mu _{ac}}$, the positive form of the absolutely continuous part of $\mu$ with respect to $\la$, $\mu _{ac}$, and $\fq_{\mu ;s} = \fq _{\mu _s}$. In particular, $\fq_T := \ov{\fq_{ac}}$ is the form of the positive semi-definite, self-adjoint operator $T = M_f \geq 0$, where $f \in L^1 (\la )$ is the Radon--Nikodym derivative of $\mu$ with respect to $\la$. This follows because, as observed by Simon, in this case his construction of the absolutely continuous and singular parts of $\fq_\mu$ essentially reduces to von Neumann's functional analytic proof of the Lebesgue decomposition and Radon--Nikodym theorem in \cite[Lemma 3.2.3]{vN3}. See also \cite[Section 5]{HdeSnoo-forms}, which arrives at the same conclusion with the choice of form domain, $\nbdom \fq_\mu \subseteq L^2 (\la )$, equal to the simple functions, \emph{i.e.} linear combinations of characteristic functions of Borel sets. 
\end{remark}

\begin{thm} \label{RNformula}
Let $\fq \geq 0$ be a densely--defined and positive semi-definite quadratic form in a separable complex Hilbert space, $\cH$. If $\fq_T = \ov{\fq_{ac}}$ is the closure of $\fq_{ac}$, then $(I+T) ^{-1} = E _\fq E_\fq ^*$, where $E_\fq : \hat{\cH} (\fq ) \hookrightarrow \cH$ is the contractive co-embedding.
\end{thm}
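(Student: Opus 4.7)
The plan is to reduce to the case of a closed form, in which $E_T E_T^* = (I+T)^{-1}$ is a standard consequence of Kato's First Representation Theorem, and to match $E_\fq E_\fq^*$ with the analogous co-embedding object $E_T E_T^*$ associated to the closed form $\fq_T = \ov{\fq_{ac}}$. The bridge between these is a natural unitary from $\hat{\cH}(\fq_{ac})$ onto the subspace $P_{ac}\hat{\cH}(\fq) \subseteq \hat{\cH}(\fq)$. Constructing this unitary, which uses Equation (\ref{SimonRNform}) in an essential way, is the main obstacle; once it is in hand the rest of the argument consists of short formal manipulations.

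First I would define $\Phi_0: \nbdom \fq \to \hat{\cH}(\fq)$ by $\Phi_0(x) := P_{ac}\mr{j}_\fq(x)$. Since $P_{ac}^2 = P_{ac}$, formula (\ref{SimonRNform}) yields
\begin{align*}
\ip{\Phi_0(x)}{\Phi_0(y)}_{\hat{\cH}(\fq)} = \ip{\mr{j}_\fq(x)}{P_{ac}\mr{j}_\fq(y)}_{\hat{\cH}(\fq)} = \fq_{ac}(x,y) + \ip{x}{y}_\cH,
\end{align*}
so $\Phi_0$ is isometric from $\nbdom \fq$, equipped with the $\fq_{ac}+\mf{id}$-inner product, into $\hat{\cH}(\fq)$. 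Since $\mr{j}_\fq(\nbdom \fq)$ is dense in $\hat{\cH}(\fq)$, the image $P_{ac}\mr{j}_\fq(\nbdom \fq)$ is dense in $P_{ac}\hat{\cH}(\fq)$, so $\Phi_0$ extends by continuity to a unitary $\Phi: \hat{\cH}(\fq_{ac}) \to P_{ac}\hat{\cH}(\fq)$; viewed as an isometry into $\hat{\cH}(\fq)$, $\Phi\Phi^* = P_{ac}$.

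By Theorem \ref{formLD}, $\fq_{ac}$ is closeable, so the co-embedding $E_T: \hat{\cH}(\fq_{ac}) \to \cH$ and form-embedding $\mr{j}_{ac}: \nbdom \fq_{ac} \hookrightarrow \hat{\cH}(\fq_{ac})$ are defined, with $E_T\mr{j}_{ac}(x) = x$ on $\nbdom \fq_{ac} = \nbdom \fq$. Since $P_s$ projects onto $\nbker E_\fq$, we have $E_\fq P_s = 0$, and hence, for $x \in \nbdom \fq$,
\begin{align*}
E_\fq \Phi \mr{j}_{ac}(x) = E_\fq P_{ac}\mr{j}_\fq(x) = E_\fq \mr{j}_\fq(x) = x = E_T \mr{j}_{ac}(x).
\end{align*}
By density, $E_T = E_\fq\Phi$, whence
\begin{align*}
E_T E_T^* = E_\fq\Phi\Phi^* E_\fq^* = E_\fq P_{ac} E_\fq^* = E_\fq E_\fq^*,
\end{align*}
using $E_\fq P_s = 0$ once more. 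Finally, under the canonical identification of $\hat{\cH}(\fq_{ac})$ with $\nbdom \fq_T \subseteq \cH$ equipped with the $\fq_T+\mf{id}$-inner product, $E_T$ becomes the inclusion map, so for $z \in \cH$ the element $y := E_T^* z \in \nbdom \fq_T$ satisfies $\fq_T(w,y) + \ip{w}{y}_\cH = \ip{w}{z}_\cH$ for all $w \in \nbdom \fq_T$. Kato's First Representation Theorem then forces $y \in \nbdom T$ with $(I+T)y = z$, so $E_T E_T^* z = y = (I+T)^{-1}z$, completing the proof.
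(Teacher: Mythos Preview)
Your proof is correct and takes a genuinely different, more conceptual route than the paper's. The paper expands $E_\fq E_\fq^*$ in an orthonormal basis of $\hat{\cH}(\fq)$, then performs a lengthy form computation using Equation~(\ref{SimonRNform}) to show that the quadratic forms of $(I+T)E_\fq E_\fq^*(I+T)$ and $I+T$ agree on $\nbdom T$, and finally argues via an isometry extension that $\sqrt{E_\fq E_\fq^*}\sqrt{I+T}$ is unitary. Your argument bypasses all of this by observing that the same formula~(\ref{SimonRNform}) directly makes $x \mapsto P_{ac}\mr{j}_\fq(x)$ an isometry for the $\fq_{ac}+\mf{id}$ inner product, yielding a unitary $\Phi: \hat{\cH}(\fq_{ac}) \to P_{ac}\hat{\cH}(\fq)$ that factors $E_\fq$ as $E_\fq\Phi = E_T$. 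The identity $E_T E_T^* = (I+T)^{-1}$ for the closed form is then immediate from Kato's representation theorem. Your approach isolates the structural content (the unitary $\Phi$ and the reduction to the closed case) from the computational content, and is both shorter and more transparent; the paper's approach, while self-contained, somewhat obscures why the result should hold.
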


\begin{lemma} \label{closeableform}
Let $A : \nbdom A \subseteq \cH \rightarrow \cH$ be a densely-defined linear operator. Then $A$ is closeable if and only if the positive semi-definite quadratic form $\fq _{A^*A} (x,y) := \ip{Ax}{Ay}_{\cH}$, with form domain $\nbdom \fq _{A^*A} := \nbdom A$, is closeable.
\end{lemma}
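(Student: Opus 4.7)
The plan is to use the criterion from the lemma immediately preceding the statement: a densely-defined positive semi-definite form $\fq$ is closeable if and only if the formal embedding $\mr{j}_{\fq}$ is closeable, equivalently if and only if the contractive co-embedding $E_{\fq} : \hat{\cH}(\fq) \hookrightarrow \cH$ is injective. I would connect this to closeability of $A$ by identifying the completion $\hat{\cH}(\fq)$ with the closure of the graph of $A$ inside $\cH \oplus \cH$.

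Concretely, set $\fq := \fq_{A^*A}$ so that the form norm on $\nbdom A$ is
\[
\| x \|^2_{\fq + \mf{id}} = \| x \|^2_{\cH} + \| A x \|^2_{\cH},
\]
which is exactly the graph norm of $A$. Consequently, the map $U : \nbdom A \rightarrow \cH \oplus \cH$ defined by $Ux := (x, Ax)$ is an isometry from $(\nbdom A, \| \cdot \|_{\fq + \mf{id}})$ onto the graph $G(A) := \{ (x, Ax) \mid x \in \nbdom A \}$. Since $\hat{\cH}(\fq)$ is the Hilbert space completion of $\nbdom A$ in the form norm, $U$ extends uniquely to a unitary isomorphism, which I will still call $U$, from $\hat{\cH}(\fq)$ onto the closure $\ov{G(A)} \subseteq \cH \oplus \cH$. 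Under this identification, the co-embedding $E_{\fq}$, which is defined on $\mr{j}_{\fq}(\nbdom \fq)$ by $E_{\fq}(\mr{j}_{\fq} x) = x$ and then extended by continuity, corresponds to the restriction to $\ov{G(A)}$ of the first-coordinate projection $\pi_1 : \cH \oplus \cH \rightarrow \cH$, $\pi_1(x, y) = x$.

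Now the equivalence follows by comparing injectivity conditions. By definition, $A$ is closeable precisely when $\ov{G(A)}$ is itself the graph of a (necessarily closed) linear operator, which in turn happens if and only if $\pi_1 |_{\ov{G(A)}}$ is injective, \emph{i.e.} $(0, y) \in \ov{G(A)}$ forces $y = 0$. Transferring this through the unitary $U$, injectivity of $\pi_1 |_{\ov{G(A)}}$ is equivalent to injectivity of $E_{\fq}$. By the preceding lemma, this is in turn equivalent to closeability of $\fq = \fq_{A^*A}$. Chaining these equivalences gives the conclusion.

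The main obstacle, such as it is, lies in step one: one must verify carefully that $U$ really extends to a unitary onto $\ov{G(A)}$ (rather than some abstract completion), and that under this identification the extension of $E_{\fq}$ by continuity coincides with $\pi_1|_{\ov{G(A)}}$. Both are routine, since $U$ is a densely-defined isometry with dense range in $\ov{G(A)}$ and the identity $E_{\fq}(Ux) = x = \pi_1(Ux)$ holds on the dense subspace $\mr{j}_{\fq}(\nbdom A)$, so equality persists after extending by continuity to all of $\hat{\cH}(\fq) \cong \ov{G(A)}$.
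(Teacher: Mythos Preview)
Your proof is correct. The paper states this lemma without proof, so there is no argument to compare against; your approach via the preceding lemma (closeability of $\fq$ is equivalent to injectivity of $E_\fq$) together with the identification of $\hat{\cH}(\fq)$ with the graph closure $\ov{G(A)}$ is the natural one, and the verification that $E_\fq$ corresponds to the first-coordinate projection under this unitary is handled cleanly.
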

In the above statement, note that $A^*A$ is not defined if $A$ is not closeable.

\begin{proof}[Proof of Theorem \ref{RNformula}]
Let $(x _j )_{j=1} ^\infty \subseteq \nbdom \fq$ be a sequence with dense linear span. Apply Gram-Schimdt orthogonalization to $(x_j)$ with respect to the $\fq +\mf{id}-$inner product of $\hat{\cH} (\fq )$. This yields a countable basis $(y_j )_{j=1} ^\infty \subseteq \nbdom \fq$, so that the sequence $(\mr{j} _\fq (y_j) )$ is an orthonormal basis of $\hat{\cH} (\fq )$. Hence,
$$ E _\fq E _\fq ^* = E _\fq I_{\hat{\cH} _\fq} E_\fq ^* = \sum _{j=1} ^\infty \ip{E_\fq \mr{j} _\fq (y_j)}{\cdot}_{\cH} E_\fq \mr{j}_\fq (y_j ) = \sum \ip{y_j}{\cdot}_\cH y_j. $$
By \cite[Theorem 2.1, Corollary 2.3]{Simon-forms}, see Theorem \ref{formLD} and Equation (\ref{SimonRNform}) above,
$$ \fq_{ac} (x,y) + \ip{x}{y}_{\cH} = \fq_{I+T} (x,y) = \ip{\sqrt{I+T}x}{\sqrt{I+T}y}_{\cH} = \ip{\mr{j} _\fq (x)}{P_{ac} \mr{j} _\fq (y)}_{\hat{\cH} (\fq )}, $$ for any $x,y \in \nbdom \fq \subseteq \nbdom \fq _{ac} \subseteq \nbdom \sqrt{I+T}$. Hence, for any $x,y \in \nbdom T$, 
\ba q_{E_\fq E_\fq ^*} \left( (I+T ) x , (I+T) y \right) & = & \ip{ \sqrt{E_\fq E_\fq ^*} (I+T) x}{\sqrt{E_\fq E_\fq ^*} (I+T) y}_{\cH} \\
& = & \sum _{j=1} ^\infty \ip{(I+T)x}{y_j}_\cH \ip{y_j}{(I+T) y}_{\cH } \\
& = & \sum \ip{\sqrt{I+T} x}{\sqrt{I+T} y_j}_\cH \ip{\sqrt{I+T} y_j}{\sqrt{I+T} y}_{\cH }  \\
& = & \sum  \ip{P_{ac} \, \mr{j} _\fq (x)}{\mr{j}_\fq (y_j)}_{\fq+\mf{id}} \ip{\mr{j} _\fq (y_j)}{P_{ac} \, \mr{j}_\fq (y)}_{\fq+\mf{id}} \\
& = & \ip{P _{ac} \, \mr{j} _\fq (x)}{P_{ac} \, \mr{j}_\fq (y)}_{q+\mf{id}} = q_{I+T} (x,y ) \\
& = & \ip{x}{(I+T) y}_{\cH}. \ea 
That is, the (closeable) quadratic forms of $I+T$ and $(I+T) E_\fq E_\fq ^* (I+T)$ agree on $\nbdom T$, which is a core for $\sqrt{I+T}$, and a form-core for $\fq_{I+T}$. Moreover, $\scr{D} := \nbran \sqrt{I+T} \cap \nbdom \sqrt{I+T}$ is a core for $\sqrt{I+T}$, so that for all $x = \sqrt{I+T} y \in \scr{D}$,
\ba \ip{x}{x} _{\cH} & = & \ip{\sqrt{I+T} y}{\sqrt{I+T} y}_{\cH} = \ip{(I+T) y}{E_\fq E_\fq ^* (I+T) y}_{\cH} \\
& = & \ip{ \sqrt{I+T} x}{E_\fq E_\fq ^* \sqrt{I+T}x}_{\cH}. \ea
That is, the (bounded) positive quadratic form of the identity, $I$, agrees with the quadratic form of $\sqrt{I+T} E_\fq E_\fq ^* \sqrt{I+T}$ on the dense subspace $\nbdom \sqrt{I+T}$. Here, if $V:= E _\fq ^* \sqrt{I+T}$, this is a closeable operator by Lemma \ref{closeableform}. In fact, $V$ extends by continuity to an isometry, since $\fq _{V^*V} = \fq _I | _{\nbdom \sqrt{I+T}}$. Moreover, $E_\fq$, and hence $\sqrt{E_\fq E_\fq ^*}$ have dense range, so that $\sqrt{E_\fq E_\fq ^*} \sqrt{I+T}$ extends to an isometry with dense range, \emph{i.e.} a unitary. For any $x,y \in \nbdom \sqrt{I+T}$, we have that 
$$ \ip{x}{y}_\cH  = \ip{\sqrt{I+T} x}{E_\fq E_\fq ^* \sqrt{I+T} y}_\cH. $$ Hence, by definition of the adjoint, for any $y \in \nbdom \sqrt{I+T}$, $E _\fq E_\fq ^* \sqrt{I+T}y \in \nbdom \sqrt{I+T}$, and 
$$ \sqrt{I+T} E_\fq E_\fq ^* \sqrt{I+T} y = y. $$  Hence for any $x = \sqrt{I+T} ^{-1} g$ and $y = \sqrt{I+T} ^{-1} h$,
\ba \ip{g}{(I+T) ^{-1} h}_{\cH} & = & \ip{x}{y}_{\cH} = \ip{E_\fq ^* \sqrt{I+T}x}{E_\fq ^* \sqrt{I+T}y}_{\cH} \\
& = & \ip{g}{E_\fq E_\fq ^* h}_{\cH}, \ea and we conclude, by the Riesz lemma for bounded sesquilinear forms, that 
$$ (I+T) ^{-1} = E_\fq E_\fq ^*. $$
\end{proof}


\begin{thm} \label{RKac}
Let $\mu, \la$ be positive, finite and regular Borel measures on $\partial \D$. Then $\mu \ll \la$ if and only if $\mu \ll _{RK} \la$. 

In this case, the co-embedding, $E _{\mu, \la} : \nbdom E _{\mu, \la} \subseteq H^2 (\la ) \hookrightarrow H^2 (\mu )$ is closed, and its domain is $Z^\la-$invariant. Both $\C [\zeta ]$ and $\scr{K} _\D = \bigvee k_z$ are cores for $E _{\mu , \la}$, $E_{\mu, \la} p = p$ and $E_{\mu ,\la} k_z = k_z$ for any $p \in \C [\zeta ]$ or Szeg\"o kernel $k_z$. The self-adjoint and positive semi-definite operator, $T_\mu := E _{\mu , \la} ^* E _{\mu , \la}$, is $\la-$Toeplitz and $$T _\mu = P_{H^2 (\la ) } M_{f} |_{H^2 (\la )}; \quad \quad f := \frac{d\mu}{d\la}, $$ in the quadratic form sense, \emph{i.e.}
$$ \ip{\sqrt{T} _\mu p}{\sqrt{T} _\mu q}_{H^2 (\la )} = \ip{M_{\sqrt{f}} \, p}{M_{\sqrt{f}} \,  q}_{L^2 (\la )}; \quad \quad \forall \ p, q \in \C [\zeta]. $$ 
\end{thm}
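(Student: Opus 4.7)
The direction $\mu \ll \la \Rightarrow \mu \ll _{RK} \la$ is Proposition \ref{necessary}, so the content of the theorem lies in the converse together with the form-identification of $T_\mu$. Assume $\mu \ll _{RK} \la$. Lemma \ref{closedembed} supplies the closed, densely--defined co-embedding $E := E_{\mu, \la}$ with $\C [\zeta]$ and $\scr{K}_\D$ as cores on which $E p = p$ and $E k_z = k_z$, together with the closed, positive, self-adjoint, $Z^\la-$Toeplitz operator $T_\mu := E^* E$. For polynomials $p, q$,
\[ \fq_{T_\mu}(p,q) = \ip{E p}{E q}_{H^2(\mu)} = \ip{p}{q}_{H^2(\mu)} = \int_{\partial \D} \ov{p(\zeta)} q(\zeta) \, \mu(d\zeta) = \fq_\mu (p,q), \]
so $\fq_\mu|_{\C [\zeta]}$ is closeable in $H^2(\la)$ with closure $\fq_{T_\mu}$. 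If $\la$ is extreme, Proposition \ref{extreme} concludes the proof, so we may assume $\la$ is non-extreme; the Szeg\H{o} condition then forces the Lebesgue density $d\la/dm$ to be strictly positive $m$-almost everywhere.

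Set $\nu := \la + \mu$. Simon's Lebesgue form decomposition (Theorem \ref{formLD}) applied to $\fq_\mu$ on $\C[\zeta]$ in $H^2(\la)$ combined with the closeability just established gives, via Theorem \ref{RNformula}, that $(I + T_\mu)^{-1} = E_{\fq_\mu} E_{\fq_\mu}^*$. Here $\hat{\cH}(\fq_\mu) = H^2(\nu)$ is the $L^2(\nu)-$completion of $\C[\zeta]$, and $E_{\fq_\mu} = E_{\la , \nu} : H^2(\nu) \to H^2(\la)$ is the bounded contractive co-embedding which fixes polynomials, arising from $\la \leq \nu$. Closeability of $\fq_\mu$ is equivalent to injectivity of $E_{\la , \nu}$. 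Theorem \ref{RKdominate} and Corollary \ref{bddRN} applied to $\mu, \la \leq \nu$ produce bounded densities $g := d\mu/d\nu$ and $h := d\la/d\nu$ in $L^\infty(\nu)$ with $g+h = 1$ $\nu$-a.e., together with the Brown--Halmos identification $E_{\la , \nu}^* E_{\la , \nu} = P_{H^2(\nu)} M_h |_{H^2(\nu)}$, so that the kernel of $E_{\la , \nu}$ consists of those $k \in H^2(\nu)$ vanishing $\nu$-a.e.\ on $\{ h > 0 \}$.

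The main step---and the main obstacle---is to upgrade injectivity of $E_{\la , \nu}$ to $\mu(\{ h = 0 \}) = 0$, i.e.\ $\mu \ll \la$. Argue by contradiction: if $\mu(\{ h = 0 \}) > 0$, inner regularity produces a closed $F \subseteq \{ h = 0 \}$ with $\mu(F) > 0$, and $(\la + \mu_{ac})(F) = 0$ since both $\la$ and $\mu_{ac} \ll \la$ live on $\{ h > 0 \}$. Non-extremality of $\la$ gives $d\la/dm > 0$ $m$-a.e., so $\la(F) = 0$ forces $m(F) = 0$; Bishop's peak-set theorem then yields $a \in A(\D)$ with $a|_F = 1$ and $|a|<1$ on $\ov{\D} \setminus F$. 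Dominated convergence shows $a^n \to 0$ in $L^2(\la + \mu_{ac})$ and in $L^2(\mu_s |_{F^c})$, while $a^n|_F = 1$; hence $a^n \to \chi_F$ in $L^2(\nu)$, and sup-norm polynomial approximation in $A(\D)$ yields $p_n \in \C[\zeta]$ with $p_n \to \chi_F$ in $L^2(\nu)$. Thus $\chi_F \in H^2(\nu)$; since $\la(F) = 0$, $\chi_F$ is the zero class in $L^2(\la)$, i.e.\ $\chi_F \in \nbker E_{\la , \nu}$, yet $\|\chi_F\|^2_{H^2(\nu)} = \mu(F) > 0$, contradicting injectivity.

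With $\mu \ll \la$ established, the density $f := g/h \in L^1(\la)$ satisfies $\fq_{T_\mu}(p,q) = \int \ov{p} q f \, d\la$ on polynomials, extended by the sup-norm density of $\C[\zeta] + \ov{\C[\zeta]}$ in $\scr{C}(\partial \D)$; this is the claimed form-theoretic identification $T_\mu = P_{H^2(\la)} M_f |_{H^2(\la)}$. The remaining assertions---closedness of $E$, the cores $\C[\zeta]$ and $\scr{K}_\D$, intertwining $E Z^\la = Z^\mu E$, and $Z^\la-$Toeplitz nature of $T_\mu$---are already contained in Lemma \ref{closedembed}.
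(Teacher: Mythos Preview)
Your argument is correct and takes a genuinely different route from the paper's. Both proofs share the opening moves (Proposition~\ref{necessary}, Lemma~\ref{closedembed}, and the observation that $\fq_\mu|_{\C[\zeta]}=\fq_{T_\mu}|_{\C[\zeta]}$ so that $\fq_\mu$ is closeable in $H^2(\la)$, equivalently $E_{\la,\nu}:H^2(\nu)\to H^2(\la)$ is injective). From there the paper works uniformly for all $\la$: it invokes Remark~\ref{measform} (Simon's decomposition of $\fq_\mu$ in $L^2(\la)$ with domain $\scr{C}(\partial\D)$, which is von~Neumann's construction of the classical Lebesgue decomposition) together with Theorem~\ref{RNformula} to obtain $(I+\hat T)^{-1}=\hat E\hat E^*$ for $\hat E:L^2(\mu+\la)\to L^2(\la)$ and $\hat T=M_f$, and then checks that on polynomials the $H^2$ operator $T_{\mu+\la}$ coincides with the compression of $I+\hat T$, whence $\int\ov p\,q\,d\mu=\int\ov p\,q\,f\,d\la$ and $\mu\ll\la$. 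Your argument instead dispatches extreme $\la$ via Proposition~\ref{extreme} and, for non-extreme $\la$, exploits the Szeg\H{o} condition $d\la/dm>0$ $m$-a.e.\ together with the Rudin--Carleson peak-set theorem: if $\mu(\{h=0\})>0$ you produce a closed $m$-null $F$ with $\mu(F)>0$, peak on it with $a\in A(\D)$, and obtain $\chi_F\in H^2(\nu)\cap\ker E_{\la,\nu}\setminus\{0\}$, contradicting injectivity. The paper's approach stays entirely inside the quadratic-form framework and needs no case split, at the cost of importing the $L^2$ Simon--von~Neumann machinery from Remark~\ref{measform}; your approach avoids that import and gives a concrete function-theoretic obstruction, at the cost of bringing in peak-set theory and splitting on extremality of $\la$. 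A minor remark: your invocation of Theorem~\ref{RNformula} is not actually used in the logical skeleton---the whole argument runs on the equivalence ``$\fq_\mu$ closeable $\Leftrightarrow E_{\la,\nu}$ injective'' plus Corollary~\ref{bddRN}---and the detour through $\mu_{ac},\mu_s$ in the convergence $a^n\to\chi_F$ can be replaced by a straight split over $F$ and $F^c$.
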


\begin{proof}
Necessity was established in Proposition \ref{necessary} and sufficiency, in the case where $\la$ is extreme, was proven in Proposition \ref{extreme}. To prove sufficiency in general, assume that $\mu \ll _{RK} \la$ so that the intersection space $\mr{int} (\mu , \la)$ is dense in the space of $\mu-$Cauchy transforms, $\scr{H} ^+ (\mu )$. Note that $\mu \ll _{RK} \la$ if and only if $\mu + \la  \ll _{RK} \la$ (and the same is true for $\ll$). This follows from Aronszajn's `sums of kernels' theorem as stated in Subsection \ref{ss-rkhs}. By Lemma \ref{closedembed}, $\mr{e} _{\mu +\la, \la} : \mr{int} (\mu + \la, \la ) \subseteq \scr{H} ^+ (\mu +\la ) \hookrightarrow \scr{H} ^+ (\la )$ is a closed embedding, $E_{\mu +\la, \la } := \scr{C} _{\mu +\la} ^* \mr{e} _{\mu +\la, \la} ^* \scr{C} _\la$, is a closed co-embedding with $\scr{K} _\D = \bigvee k_z$ and $\C [\zeta ]$ as cores, and $T := T_{\mu +\la} = E_{\mu +\la, \la } ^* E_{\mu +\la, \la} \geq 0$ is self-adjoint, positive semi-definite, densely--defined and $\la-$Toeplitz. 

By Remark \ref{measform} and Theorem \ref{RNformula} above if $\hat{E} : L^2 (\mu + \la ) \hookrightarrow L^2 (\la )$ is the contractive co-embedding, and $\hat{T} := M_f$, where $f \geq 0$ $\la-a.e.$, $f \in L^1 (\la )$ is the self-adjoint, positive semi-definite multiplication operator by the Radon--Nikodym derivative, $f = \frac{d\mu}{d\la}$, then $(I +\hat{T}) ^{-1} = \hat{E} \hat{E}^*$. On the other hand, $E:= E_{\mu + \la , \la} : \nbdom E_{\mu +\la , \la} \subseteq H^2 (\la ) \hookrightarrow H^2 (\mu +\la )$ is closed and bounded below by $1$, and so has a contractive inverse, namely, for any $h \in \scr{K} _\D$ or in $\C [\zeta ]$, 
$$ \hat{E} E h = h \in H^2 (\la ), \quad \mbox{and,} \quad E \hat{E} h = h \in H^2 (\mu +\la ). $$ Hence, 
$$ \hat{E} E | _{\nbdom E} = I_{H^2 (\la)} |_{\nbdom E}, \quad \mbox{and} \quad E \hat{E} | _{H^2 (\mu + \la ) } = I _{\mu +\la}. $$ Observe that the contractive co-embedding, $\hat{E}$ is necessarily injective and has dense range, so that it has a closed, potentially unbounded inverse, $\hat{E} ^{-1}$, which is densely--defined. Since $(I +\hat{T} ) ^{-1} = \hat{E} \hat{E} ^*$, we conclude that $I +\hat{T} = \hat{E} ^{-*} \hat{E} ^{-1}$. Hence, for any $p, q \in \C [\zeta] \subseteq H^2 (\la )$, 
\ba \ip{\sqrt{I +\hat{T}} p}{\sqrt{I+ \hat{T}} q}_{L^2 (\la )} & = & \ip{\hat{E} ^{-1} p}{\hat{E} ^{-1} q}_{L^2 (\mu + \la )} \\
& = & \ip{p}{q}_{H^2 (\mu +\la )} = \ip{Ep}{Eq}_{H^2 ( \mu +\la  )} \\
& = & \ip{\sqrt{T} p}{\sqrt{T} q}_{H^2 (\la )}. \ea 
This calculation shows that the `compression' of $I + \hat{T}$ to the intersection of its domain with the subspace $H^2 (\la)$ is equal to $T$, in this quadratic form sense. In particular $T - I \geq 0$ is the compression of $\hat{T} = M_f$ to $H^2 (\la )$, where $f \in L^1 (\la )$ is the Radon--Nikodym derivative of $\mu$ with respect to $\la$. In conclusion, for any polynomials $p,q$,
\ba \int _{\partial \D} \ov{p(\zeta)} q (\zeta) \mu (d\zeta ) & = & \ip{p}{q}_{H^2 (\mu )}  \\
& = & \ip{\sqrt{T}p}{\sqrt{T}q}_{H^2 (\la)} - \ip{p}{q}_{H^2 (\la)} \\
& = & \ip{\sqrt{\hat{T}} p}{\sqrt{\hat{T}}q}_{L^2 (\la )} \\
& = & \ip{M_{\sqrt{f}} p}{M_{\sqrt{f}} q}_{L^2 (\la)} \\
& = & \int _{\partial \D} \ov{p(\zeta)} q(\zeta) f (\zeta ) \la (d\zeta ). \ea 
As in the second proof of sufficiency of Theorem \ref{RKdominate}, this equality can be extended to arbitrary $g,h \in \C [\zeta ] + \ov{\C [\zeta ] }$, so that by \mbox{Weierstra\ss} approximation, $\mu \ll \la$ with Radon--Nikodym derivative $f  \geq 0$, $f \in L^1 (\la )$. 
\end{proof}

\section{Lebesgue decomposition via reproducing kernels}

By Theorem \ref{RKac}, our definition of reproducing kernel absolute continuity is equivalent to the classical definition of absolute continuity for finite, positive and regular Borel measures on the complex unit circle. In particular, if $\mu \ll \la$, it follows that the intersection space of $\mu$
and $\la-$Cauchy transforms is dense in the space of $\mu-$Cauchy transforms. Hence, if $\mu = \mu _{ac} + \mu _s$ is the Lebesgue decomposition of $\mu$ with respect to $\la$, then $\mr{int} (\mu _{ac} , \la )$ is dense in $\scr{H} ^+ (\mu _{ac} )$, and since $\mu \geq \mu _{ac}$, $\mr{int} (\mu _{ac} , \la ) \subseteq \mr{int} (\mu , \la )$. That is, if $\mu _{ac} \neq 0$, it follows that $\mr{int} (\mu, \la ) \neq \{ 0 \}$ is not trivial. This raises several natural questions: How can we identify the space of $\mu _{ac}-$Cauchy transforms? Is $\mr{int} (\mu , \la ) ^{-\mu } := \mr{int} (\mu , \la ) ^{ -\| \cdot \| _{\mu} }$ equal to the space of $\mu _{ac}-$Cauchy transforms? We will see that the answer to the second question is positive if $\la$ is non-extreme, but that in general, $\mr{int} (\mu , \la ) ^{-\mu}$ is not the space of Cauchy transforms of any positive measure, see Corollary \ref{LDvsformLD} and Example \ref{Lebeg2}.

\begin{thm} \label{CTsubspace}
If $\scr{M}$ is a RKHS in $\D$ that embeds contractively in $\scr{H} ^+ (\mu )$, then $\scr{M} = \scr{H} ^+ (\ga )$ for a positive measure, $\ga$, $\ga \leq \mu$, if and only if $\mr{e} : \scr{M} \hookrightarrow \scr{H} ^+ (\mu )$ is such that the positive semi-definite contraction $\tau := \mr{e} \mr{e} ^*$ is $V_\mu-$Toeplitz.

In this case, $\scr{M} = \scr{H} ^+ (\ga ) = \scr{R} (\mr{e} )$, and the complementary space of $\scr{H} ^+ (\ga)$ in $\scr{H} ^+ (\mu )$ is $\scr{H} ^+ (\nu )$, for a positive measure, $\nu$, where $k^\nu = k^\mu - k^\ga$ so that $\mu = \ga + \nu$.
\end{thm}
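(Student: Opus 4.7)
The plan is to prove both directions separately, with necessity being a short operator calculation and sufficiency being a transfer to $H^2(\mu)$ followed by the classical trigonometric moment problem.

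For necessity, assume $\scr{M} = \scr{H}^+(\ga)$ with $\ga \leq \mu$. Theorem~\ref{RKdominate} gives $k^\ga \leq k^\mu$, so $\mr{e} : \scr{H}^+(\ga) \hookrightarrow \scr{H}^+(\mu)$ is contractive. Applying Corollary~\ref{bddRN} with $(\mu,\la) \mapsto (\ga,\mu)$ produces the intertwining $\mr{e}^* V_\mu = V_\ga \mr{e}^*$, and taking adjoints yields $V_\mu^* \mr{e} = \mr{e}\, V_\ga^*$. Since $V_\ga$ is an isometry,
\[ V_\mu^* \tau V_\mu = V_\mu^* \mr{e}\,\mr{e}^* V_\mu = \mr{e}\, V_\ga^* V_\ga\, \mr{e}^* = \mr{e}\,\mr{e}^* = \tau. \]

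For sufficiency, I would transfer to $H^2(\mu)$ via the isometric isomorphism $\scr{C}_\mu$, setting $\hat\tau := \scr{C}_\mu^* \tau \scr{C}_\mu$. This is a positive contraction on $H^2(\mu)$, and because $V_\mu = \scr{C}_\mu Z^\mu \scr{C}_\mu^*$ the $V_\mu$-Toeplitz hypothesis becomes $(Z^\mu)^* \hat\tau Z^\mu = \hat\tau$. Define moments $c_n := \langle 1, \hat\tau \zeta^n\rangle_{H^2(\mu)}$ for $n \geq 0$ and $c_{-n} := \ov{c_n}$. Iterating the Toeplitz identity shows that for $i,j \geq 0$, $\langle \zeta^i, \hat\tau \zeta^j\rangle_{H^2(\mu)} = c_{j-i}$, so positivity of $\hat\tau$ on each $p = \sum p_k \zeta^k \in \C[\zeta]$ gives
\[ 0 \leq \langle p, \hat\tau p\rangle_{H^2(\mu)} = \sum_{i,j \geq 0} \ov{p_i}\, p_j\, c_{j-i}. \]
Hence every finite Toeplitz section $[c_{j-i}]_{i,j=0}^N$ is positive semi-definite, and Herglotz's solution of the trigonometric moment problem produces a unique positive, finite Borel measure $\ga$ on $\partial\D$ with $\int \zeta^n d\ga = c_n$ for all $n$. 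The bound $\hat\tau \leq I$ gives $\int |p|^2 d\ga \leq \int |p|^2 d\mu$ for every $p \in \C[\zeta]$, and combining Fej\'er--Riesz with Weierstra\ss\ approximation, exactly as in the first proof of sufficiency of Theorem~\ref{RKdominate}, upgrades this to $\int f\, d\ga \leq \int f\, d\mu$ for all nonnegative $f \in \scr{C}(\partial\D)$, so $\ga \leq \mu$.

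To identify $\scr{M}$ with $\scr{H}^+(\ga)$, I would expand the Szeg\"o kernels as $k^\mu_z = \sum_j \ov z^j k^\mu_j$ with $k^\mu_j = \scr{C}_\mu \zeta^j$, and compute
\[ k^{\scr{M}}(z,w) = \langle k^\mu_z, \tau k^\mu_w\rangle_\mu = \sum_{i,j \geq 0} z^i \ov w^j \langle \zeta^i, \hat\tau \zeta^j\rangle_{H^2(\mu)} = \sum_{i,j \geq 0} z^i \ov w^j c_{j-i} = k^\ga(z,w), \]
the last equality by expanding $(1-z\bar\zeta)^{-1}(1-\bar w\zeta)^{-1}$ under the integral defining $k^\ga$. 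Uniqueness of the RKHS for a given kernel yields $\scr{M} = \scr{H}^+(\ga) = \scr{R}(\mr{e})$. For the complementary space, Theorem~\ref{compRKHS} identifies $\scr{R}^c(\mr{e})$ as the RKHS with kernel $k^\mu - k^\ga$; since $k^\mu - k^\ga = k^\nu$ for $\nu := \mu - \ga \geq 0$, this complementary space is $\scr{H}^+(\nu)$, and $\mu = \ga + \nu$.

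The one genuinely new observation is the translation of the abstract $V_\mu$-Toeplitz hypothesis on $\tau$ into the trigonometric-moment-problem positivity condition on the scalar sequence $(c_n)$; the Toeplitz identity is precisely what forces the matrix entries $\langle \zeta^i, \hat\tau \zeta^j\rangle_{H^2(\mu)}$ to depend only on $j-i$, and once that is observed, positivity of $\hat\tau$ is exactly Herglotz positivity. Everything else is a direct invocation of results already in the paper or a standard Fej\'er--Riesz/Weierstra\ss\ density calculation.
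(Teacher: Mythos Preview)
Your proof is correct and follows essentially the same approach as the paper. Both transfer the $V_\mu$-Toeplitz contraction $\tau$ to $T=\hat\tau$ on $H^2(\mu)$ and extract a positive measure $\ga\leq\mu$ from the resulting moment data; the paper packages this as a positive linear functional on $\scr{C}(\partial\D)$ and invokes Riesz--Markov, while you phrase it as the trigonometric moment problem and invoke Herglotz, but these are the same argument in different dress, and your explicit verification of the kernel identity $k^{\scr{M}}=k^\ga$ makes a step the paper leaves implicit.
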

\begin{proof}
First, if $\scr{H} ^+ (\ga) = \scr{M} \subseteq \scr{H} ^+ (\mu )$ is contractively contained, then, $\mr{e} : \scr{H} ^+ (\ga) \hookrightarrow \scr{H} ^+ (\mu )$ is trivially a (contractive) multiplier so that, as before, $\mr{e} ^* k_z ^\mu = k_z ^\ga$, and
$$ \mr{e} ^* V^\mu = V^\ga \mr{e} ^*. $$ In conclusion, 
$$ V ^{\mu *} \mr{e} \mr{e} ^* V^\mu = \mr{e} V ^{\ga *} V^\ga \mr{e} ^* = \mr{e} \mr{e} ^* = \tau. $$

Conversely, if $\tau = \mr{e} \mr{e} ^*$ is $V^\mu-$Toeplitz and contractive, then, as in the proof of Theorem \ref{RKdominate}, $T := \scr{C} _\mu ^* \mr{e} \mr{e} ^* \scr{C} _\mu$ is a contractive $Z ^\mu-$Toeplitz operator and we can appeal to the Riesz--Markov theorem to show that there is a $\ga \geq 0$, so that $T = P_{H^2 (\mu)} M_f | _{H^2 (\mu )}$, where $f \geq 0$, $\| f \| _\infty \leq 1$ is the Radon--Nikodym derivative of $\ga$ with respect to $\mu$. Namely, one can define a linear functional, $\hat{\mu} _T$, on $\C [\zeta ] + \ov{\C [\zeta]} \subseteq \scr{C} (\partial \D )$, by
$$ \hat{\mu} _T (p + \ov{q} ) := \ip{1}{Tp}_{H^2 (\mu )} + \ip{q}{T1}_{H^2 (\mu )}. $$
It is easy to check that $\hat{\mu} _T$ is bounded and positive using the Fej\'er--Riesz theorem, as in the proof of Theorem \ref{RKdominate}. The fact that $T$ is a positive semi-definite $Z^\mu-$Toeplitz contraction ensures that $\hat{\mu} _T$ extends to a bounded, positive linear functional on $\scr{C} (\partial \D )$, and that $\hat{\mu} _T \leq \hat{\mu}$, so that $\hat{\mu} _T = \hat{\ga}$ for some finite, regular and positive Borel measure, $\ga \leq \mu$, by the Riesz--Markov theorem. 

By  Theorem \ref{compRKHS}, the complementary space, $\scr{R} ^c (\mr{e} )$, of $\scr{R} (\mr{e}) = \scr{H} ^+ (\ga)$ is a RKHS in $\D$ with reproducing kernel $k ' (z,w) = k^\mu (z,w) - k^\ga (z,w)$ and it is contractively contained in $\scr{H} ^+ (\mu )$, by the inclusion theorem. Moreover, if $\mr{j} : \scr{R} ^c (\mr{e} ) \hookrightarrow \scr{H} ^+ (\mu )$ is the contractive embedding, then it follows that $\mr{j} \mr{j} ^* = I - \mr{e} \mr{e} ^* \geq 0$ is also a positive semi-definite $V^\mu-$Toeplitz contraction. Hence, by the first part of the proof, $\scr{H} = \scr{H} ^+ (\nu)$ for a positive measure, $\nu$. Finally, since $k^\mu = k^\ga + k^\nu$, we obtain that $\mu = \ga + \nu$.
\end{proof}

\begin{lemma} \label{coinvint}
Given any $\mu, \la$, the intersection space $\mr{int} (\mu ,\la)$, is both $V_\la$ and $V_\mu-$co-invariant, and $V_\la ^* | _{\mr{int} (\mu, \la )} = V_\mu ^* |_{\mr{int} (\mu , \la )}$. 
\end{lemma}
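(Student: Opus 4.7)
The proof is essentially a one-line consequence of Lemma \ref{bwshift}. That lemma provides an explicit formula for the adjoint $V_\mu^*$ on $\scr{H}^+(\mu)$:
$$ (V_\mu^* h)(z) = \frac{h(z) - h(0)}{z}. $$
The plan is to observe that the right-hand side depends only on the values of $h$ as an analytic function on $\D$, and not on the measure $\mu$ at all. The same formula, with $\la$ in place of $\mu$, computes $V_\la^*$ on $\scr{H}^+(\la)$.

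With this observation in hand, the argument proceeds as follows. Fix $h \in \mr{int}(\mu,\la) = \scr{H}^+(\mu) \cap \scr{H}^+(\la)$. Viewing $h$ as an element of $\scr{H}^+(\mu)$, the vector $V_\mu^* h$ is the analytic function $z \mapsto (h(z) - h(0))/z$, and this function lies in $\scr{H}^+(\mu)$. Viewing the same $h$ as an element of $\scr{H}^+(\la)$, Lemma \ref{bwshift} gives that $V_\la^* h$ is represented by the identical analytic function $z \mapsto (h(z) - h(0))/z$, which lies in $\scr{H}^+(\la)$. Since these two elements are the same function on $\D$, they belong to both RKHS simultaneously, hence to $\mr{int}(\mu,\la)$. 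This proves that $\mr{int}(\mu,\la)$ is co-invariant under both $V_\mu^*$ and $V_\la^*$, and that the two restrictions agree.

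There is no real obstacle: the entire content is the recognition that the formula of Lemma \ref{bwshift} for the backward shift action is independent of the measure, so the two backward shifts automatically agree on the common domain $\mr{int}(\mu,\la)$.
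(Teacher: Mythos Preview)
Your proposal is correct and takes essentially the same approach as the paper: the paper's proof is the single sentence ``This is immediate, by Lemma \ref{bwshift}, since both $V_\mu^*$ and $V_\la^*$ act as `backward shifts' on power series,'' which is precisely the observation you spell out.
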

\begin{proof}
This is immediate, by Lemma \ref{bwshift}, since both $V_\mu ^*$ and $V_\la ^*$ act as `backward shifts' on power series. 
\end{proof}

\begin{lemma} \label{invint}
If $\la$ is non-extreme, then the intersection space, $\mr{int} (\mu, \la)$, is $V_\mu-$reducing. If $\la$ is extreme, then $\mr{int} (\mu , \la )$ is $V_\mu-$reducing (and $V_\la-$reducing) if and only if $V_{\mu} | _{\mr{int} (\mu , \la)} = V_\la | _{\mr{int} (\mu, \la )}$. 
\end{lemma}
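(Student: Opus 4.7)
The plan is to combine the co-invariance result of Lemma \ref{coinvint} with the explicit formula for $V_\mu$ on Cauchy transforms provided by Lemma \ref{bwshift}. Since $\mr{int}(\mu,\la)$ is already co-invariant for both $V_\mu$ and $V_\la$ by Lemma \ref{coinvint}, the statement ``$V_\mu$-reducing'' is equivalent to ``$V_\mu$-invariant'', and the question becomes: given $h \in \mr{int}(\mu,\la)$, when does $V_\mu h$ (resp.\ $V_\la h$) lie back in the intersection? The key tool is that Lemma \ref{bwshift} gives, as equalities of analytic functions on $\D$,
$$ (V_\mu h)(z) = z\,h(z) + (V_\mu h)(0)\cdot 1, \qquad (V_\la h)(z) = z\,h(z) + (V_\la h)(0)\cdot 1, $$
for any $h \in \mr{int}(\mu,\la)$. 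Subtracting yields the decisive identity
$$ V_\mu h - V_\la h = c(h)\cdot 1, \qquad c(h) := (V_\mu h)(0) - (V_\la h)(0) \in \C. $$

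Next, I would observe that $V_\mu h \in \scr{H}^+(\mu)$ and $V_\la h \in \scr{H}^+(\la)$ automatically, so the only real question is whether $V_\mu h \in \scr{H}^+(\la)$ (and symmetrically whether $V_\la h \in \scr{H}^+(\mu)$). From the identity above, $V_\mu h \in \scr{H}^+(\la)$ if and only if $c(h)\cdot 1 \in \scr{H}^+(\la)$, and likewise $V_\la h \in \scr{H}^+(\mu)$ iff $c(h)\cdot 1 \in \scr{H}^+(\mu)$. This is where the extremity dichotomy enters via the equivalences listed just before Lemma \ref{bwshift}: $\la$ is non-extreme iff $1 \in \scr{H}^+(\la)$.

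For the first assertion, if $\la$ is non-extreme then $1\in \scr{H}^+(\la)$, so $c(h)\cdot 1 \in \scr{H}^+(\la)$ automatically for every $h \in \mr{int}(\mu,\la)$, giving $V_\mu h \in \mr{int}(\mu,\la)$; combined with $V_\mu$-co-invariance this proves $V_\mu$-reducing. For the second assertion, when $\la$ is extreme, $1 \notin \scr{H}^+(\la)$, so $c(h)\cdot 1 \in \scr{H}^+(\la)$ forces $c(h)=0$. Hence $V_\mu$-invariance of $\mr{int}(\mu,\la)$ is equivalent to $c(h)\equiv 0$, i.e., $V_\mu|_{\mr{int}(\mu,\la)} = V_\la|_{\mr{int}(\mu,\la)}$; and when this equality holds, $V_\mu h = V_\la h$ lies in both $\scr{H}^+(\mu)$ and $\scr{H}^+(\la)$, so both invariance (hence reducing) conditions hold at once. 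The argument is essentially mechanical once Lemma \ref{bwshift} is in hand; the only thing to be careful about is that $zh$ and the constant function $1$ need not individually belong to either space, so the identity must be read as an equality of holomorphic functions on $\D$, not as a decomposition within $\scr{H}^+(\mu)$ or $\scr{H}^+(\la)$.
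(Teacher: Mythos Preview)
Your proposal is correct and follows essentially the same approach as the paper's own proof: both use Lemma \ref{bwshift} to write $V_\mu h = V_\la h + c\cdot 1$ as holomorphic functions, then invoke the extremity criterion ($\la$ non-extreme $\Leftrightarrow 1\in\scr{H}^+(\la)$) to decide whether $V_\mu h$ lands in $\scr{H}^+(\la)$. Your version is slightly more explicit in noting that Lemma \ref{coinvint} reduces ``reducing'' to ``invariant'', and your closing caveat about the identity being one of holomorphic functions rather than a decomposition within either space is a worthwhile clarification.
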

\begin{proof}
By Lemma \ref{bwshift}, if $h \in \mr{int} (\mu, \la)$, then $V_\mu h \in \scr{H} ^+ (\mu )$ and 
\be \label{same} (V_\mu h) (z) = z h(z) + (V_\mu h) (0) 1= (V_\la h) (z) - (V_\la h) (0) 1 + (V_\mu h) (0) 1. \ee Hence, if $c:= (V_\la h) (0) - (V_\mu h) (0) \in \C$ and $\la$ is non-extreme, then both $V_\la h$ and $c1$ belong to $\scr{H} ^+ (\la )$ so that $V_\mu h \in \scr{H} ^+ (\la ) \cap \scr{H} ^+ (\mu ) = \mr{int} (\mu, \la)$. Recall that $\la$ is extreme if and only if $\scr{H} ^+ (\la )$ does not contain the constant functions. Hence, if $\la$ is extreme then $\mr{int} (\mu ,\la)$ will be $V_\mu-$reducing if and only if $(V_\mu h) (0) = (V_\la h) (0)$ for all $h \in \mr{int} (\mu , \la )$. By Equation (\ref{same}), this happens if and only if $V_\mu h = V_\la h$.  
\end{proof}

\begin{cor} \label{reducemeas}
If $\scr{M} \subseteq \scr{H} ^+ (\mu )$ is a $V_\mu-$reducing subspace, then $\scr{M} = \scr{H} ^+ (\ga )$ for some $\ga \leq \mu$. Moreover, $\scr{M} ^\perp = \scr{H} ^+ (\ga ')$ for some $\mu \geq \ga' \geq 0$ so that $\ga + \ga ' = \mu$.
\end{cor}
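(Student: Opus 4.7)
The plan is to apply Theorem \ref{CTsubspace} directly to the embedding $\mr{e} : \scr{M} \hookrightarrow \scr{H} ^+ (\mu )$. Since $\scr{M}$ is a closed subspace of $\scr{H} ^+ (\mu )$, the inclusion map $\mr{e}$ is an isometry (in particular, a contractive embedding), and its Hilbert space adjoint $\mr{e} ^*$ coincides with the orthogonal projection $P_\scr{M}$ of $\scr{H} ^+ (\mu )$ onto $\scr{M}$. Thus $\tau := \mr{e} \mr{e} ^* = P_\scr{M}$.

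The key verification is that $\tau$ is $V_\mu-$Toeplitz. Since $\scr{M}$ is $V_\mu-$reducing, the projection $P_\scr{M}$ commutes with both $V_\mu$ and $V_\mu ^*$. Using that $V_\mu$ is an isometry, $V_\mu ^* V_\mu = I$, so
$$ V_\mu ^* \tau V_\mu = V_\mu ^* P_\scr{M} V_\mu = P_\scr{M} V_\mu ^* V_\mu = P_\scr{M} = \tau. $$
Theorem \ref{CTsubspace} then produces a positive measure $\ga \leq \mu$ with $\scr{M} = \scr{R} (\mr{e} ) = \scr{H} ^+ (\ga )$, and asserts that the complementary space $\scr{R} ^c (\mr{e} )$ equals $\scr{H} ^+ (\nu )$ for a positive measure $\nu$ satisfying $k^\nu = k^\mu - k^\ga$, equivalently $\mu = \ga + \nu$.

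Finally, I would identify $\scr{R} ^c (\mr{e} )$ with the Hilbert space orthogonal complement $\scr{M} ^\perp$. Since $\mr{e}$ is an isometric embedding, it is in particular a partial isometry, so the equivalence (i) $\Leftrightarrow$ (ii) of Theorem \ref{oprancomp} gives $\scr{H} ^+ (\mu ) = \scr{R} (\mr{e} ) \oplus \scr{R} ^c (\mr{e} )$ as an orthogonal direct sum isometrically contained in $\scr{H} ^+ (\mu )$. Hence $\scr{M} ^\perp = \scr{R} ^c (\mr{e} ) = \scr{H} ^+ (\ga ')$ with $\ga ' := \nu$, yielding $\mu = \ga + \ga '$ as required.

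The only nontrivial step is the Toeplitz verification, and even this is immediate once one observes that the reducing hypothesis is exactly what is needed for $P_\scr{M}$ to commute past the isometry $V_\mu$; everything else is a routine invocation of the two cited theorems.
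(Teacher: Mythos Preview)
Your proof is correct and follows the paper's argument essentially verbatim: both identify $\tau = \mr{e}\mr{e}^* = P_\scr{M}$, verify the $V_\mu$-Toeplitz condition via commutation of $P_\scr{M}$ with $V_\mu$, and invoke Theorem~\ref{CTsubspace}. Your additional appeal to Theorem~\ref{oprancomp} to identify $\scr{R}^c(\mr{e})$ with $\scr{M}^\perp$ just makes explicit a step the paper leaves implicit.
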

\begin{proof}
Let $P$ be the orthogonal projection of $\scr{H} ^+ (\mu )$ onto $\scr{M}$. Then if $\mr{e} : \scr{M} \hookrightarrow \scr{H} ^+ (\mu )$ is the isometric embedding, $P= \mr{e} \mr{e} ^*$. Hence, $$ V_\mu ^* \mr{e} \mr{e} ^* V_\mu = V_\mu ^* P V_\mu = V_\mu ^* V_\mu P = P = \mr{e}\mr{e} ^*,$$ so that $\tau = \mr{e} \mr{e} ^*$ is $V ^\mu-$Toeplitz and $\scr{M} = \scr{H} ^+ (\ga )$ for some $0 \leq \ga \leq \mu$, and $\scr{M} ^\perp = \scr{H} ^+ (\ga ' )$, by Theorem \ref{CTsubspace}.
\end{proof}

\begin{thm} \label{LDviaRKne}
Let $\mu, \la \geq 0$ be finite, positive and regular Borel measures on $\partial \D$. If the intersection space, $\mr{int} (\mu, \la)$, is $V ^\mu-$reducing and $\mu = \mu _{ac} + \mu _s$ is the Lebesgue decomposition of $\mu$ with respect to $\la$, then 
$$ \scr{H} ^+ (\mu ) = \scr{H} ^+ ( \mu _{ac} ) \oplus \scr{H} ^+ (\mu _s ).$$ In this case,
$$ \scr{H} ^+ (\mu _{ac} ) = \mr{int} (\mu , \la ) ^{-\mu}, \quad \mbox{and} \quad \scr{H} ^+ (\mu _s ) \cap \scr{H} ^+ (\la ) = \{ 0 \}. $$ That is, $\mu_{ac}$ is the largest positive measure $\leq \mu$ which is RK-ac with respect to $\la$, and $\mu _s$ is RK-singular with respect to $\la$.  
\end{thm}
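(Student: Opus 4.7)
The plan is to apply Corollary \ref{reducemeas} to the $V_\mu$-reducing subspace $\scr{M} := \mr{int}(\mu, \la)^{-\mu}$ to produce positive measures $\ga, \ga' \leq \mu$ with $\ga + \ga' = \mu$, $\scr{H}^+(\ga) = \scr{M}$, and $\scr{H}^+(\ga') = \scr{M}^\perp$, so that $\scr{H}^+(\mu) = \scr{H}^+(\ga) \oplus \scr{H}^+(\ga')$ is an orthogonal direct sum with both summands isometrically embedded in $\scr{H}^+(\mu)$. I will then show $\ga \ll \la$ and $\ga' \perp \la$ and invoke uniqueness of the classical Lebesgue decomposition to identify $\ga = \mu_{ac}$ and $\ga' = \mu_s$.

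For the first identification, I would observe that $\mr{int}(\mu, \la) \subseteq \scr{H}^+(\la)$ trivially and $\mr{int}(\mu, \la) \subseteq \scr{H}^+(\ga)$ by construction, so $\mr{int}(\mu, \la) \subseteq \mr{int}(\ga, \la)$. Since the embedding $\scr{H}^+(\ga) \hookrightarrow \scr{H}^+(\mu)$ is isometric, the $\mu$- and $\ga$-norms coincide on $\scr{H}^+(\ga)$, and the $\mu$-density of $\mr{int}(\mu, \la)$ in $\scr{H}^+(\ga)$ becomes $\ga$-density. Hence $\mr{int}(\ga, \la)$ is $\ga$-dense in $\scr{H}^+(\ga)$, giving $\ga \ll_{RK} \la$, which by Theorem \ref{RKac} upgrades to $\ga \ll \la$.

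For $\ga' \perp \la$, the reproducing-kernel version $\scr{H}^+(\ga') \cap \scr{H}^+(\la) = \{0\}$ follows quickly: any $h$ in the intersection belongs to $\scr{H}^+(\mu) \cap \scr{H}^+(\la) = \mr{int}(\mu, \la) \subseteq \scr{H}^+(\ga)$, and then $h \in \scr{H}^+(\ga) \cap \scr{H}^+(\ga') = \{0\}$ by the orthogonal decomposition together with the isometric embeddings. To upgrade this to classical mutual singularity, I would argue by contradiction: assume the Lebesgue decomposition of $\ga'$ with respect to $\la$ has a nontrivial absolutely continuous piece $\ga'_{ac} \neq 0$; then Proposition \ref{necessary} forces $\mr{int}(\ga'_{ac}, \la)$ to be dense in the nonzero space $\scr{H}^+(\ga'_{ac})$, hence to contain a nonzero element $h$; since $\ga'_{ac} \leq \ga'$, Theorem \ref{RKdominate} together with Aronszajn's inclusion theorem gives $\scr{H}^+(\ga'_{ac}) \subseteq \scr{H}^+(\ga')$, placing $h$ inside $\scr{H}^+(\ga') \cap \scr{H}^+(\la) = \{0\}$, a contradiction.

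With $\mu = \ga + \ga'$, $\ga \ll \la$, and $\ga' \perp \la$, uniqueness of the Lebesgue decomposition yields $\ga = \mu_{ac}$ and $\ga' = \mu_s$, and the three displayed conclusions of the theorem follow at once. I expect the only real obstacle to be the final step of upgrading from the RK-singularity $\scr{H}^+(\mu_s) \cap \scr{H}^+(\la) = \{0\}$ to classical mutual singularity; fortunately, Proposition \ref{necessary} already encodes exactly the density statement needed to drive the contradiction.
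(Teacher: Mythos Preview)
Your argument is correct. The setup mirrors the paper's proof exactly: both apply Corollary \ref{reducemeas} to the $V_\mu$-reducing closure $\scr{M} = \mr{int}(\mu,\la)^{-\mu}$ to obtain $\ga, \ga'$ with $\scr{H}^+(\ga) = \scr{M}$, $\scr{H}^+(\ga') = \scr{M}^\perp$, $\ga + \ga' = \mu$, and both establish $\ga \ll \la$ via $\ga \ll_{RK} \la$ and Theorem \ref{RKac}.

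The difference lies in the identification $\ga = \mu_{ac}$. The paper argues by a sandwich: maximality of $\mu_{ac}$ among absolutely continuous measures $\leq \mu$ gives $\ga \leq \mu_{ac}$, and then the contractive embedding $\scr{H}^+(\mu_{ac}) \hookrightarrow \scr{H}^+(\mu)$, restricted to the dense subspace $\mr{int}(\mu_{ac},\la)$, lands in $\scr{H}^+(\ga)$ and extends to a contractive embedding $\scr{H}^+(\mu_{ac}) \hookrightarrow \scr{H}^+(\ga)$, whence $\mu_{ac} \leq \ga$ by Theorem \ref{RKdominate}. You instead verify directly that $\ga' \perp \la$ (first in the RK sense, then classically via Proposition \ref{necessary}) and invoke uniqueness of the Lebesgue decomposition. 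Your route is arguably more symmetric and makes the RK-singularity statement $\scr{H}^+(\mu_s) \cap \scr{H}^+(\la) = \{0\}$ an explicit intermediate step rather than a consequence; the paper's route, on the other hand, showcases Theorem \ref{RKdominate} and avoids the separate upgrade from RK-singularity to classical singularity. Both rely on a classical fact about Lebesgue decomposition (maximality for the paper, uniqueness for you), so neither is more self-contained than the other.
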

In particular, $\mr{int} (\mu, \la)$ will be $V^\mu-$reducing if $\la$ is non-extreme by Lemma \ref{invint}.
\begin{proof}
By Theorem \ref{RKac}, we have that $\mr{int} (\mu _{ac}, \la) \subseteq \mr{int} (\mu, \la )$ is dense in $\scr{H} ^+ (\mu _{ac})$. Since we assume that $\mr{int} (\mu , \la)$ is $V_\mu-$reducing, its closure, $\mr{int} (\mu , \la ) ^{-\mu}$, is also $V_\mu-$reducing and then $\mr{int} (\mu , \la ) ^{-\mu} = \scr{H} ^+ (\ga)$ for some $0 \leq \ga \leq \mu$ by Corollary \ref{reducemeas}. By construction $\ga \ll _{RK} \la$ so that $\ga \ll \la$ by Theorem \ref{RKac}. By maximality, $\ga \leq \mu _{ac}$ and by construction $\mr{int} (\mu , \la ) \subseteq \mr{int} (\ga , \la )$. However, we also have that $\mr{int} (\mu _{ac} , \la ) \subseteq \mr{int} (\mu , \la ) \subseteq \mr{int} (\ga , \la )$. Hence if $\mr{e} : \scr{H} ^+ (\mu _{ac} ) \hookrightarrow \scr{H} ^+ (\mu )$ is the contractive embedding, then $\mr{e}$, restricted to the dense subspace $\mr{int} (\mu _{ac} , \la ) \subseteq \scr{H} ^+ (\mu _{ac} )$ defines a contraction into $\scr{H} ^+ (\ga )$, which is isometrically contained in $\scr{H} ^+ (\mu )$. It follows that $\mr{e}$ extends by continuity to a contractive embedding of $\scr{H} ^+ (\mu _{ac} )$ into $\scr{H} ^+ (\ga)$. Hence, by Theorem \ref{RKdominate}, $\mu _{ac} \leq \ga$ and we conclude that $\mu _{ac} =\ga$.
\end{proof}

\begin{cor}
Let $\mu, \la \geq 0$ be finite, positive and regular Borel measures on $\partial \D$. If $\mr{int} (\mu ,\la) = \{ 0 \}$ so that $\mu \perp _{RK} \la$, then $\mu \perp \la$. If $\mr{int} (\mu , \la )$ is either $V_\mu$ or $V_\la-$reducing then $\mu \perp \la$ if and only if $\mu \perp _{RK} \la$. In particular, if either $\mu$ or $\la$ is non-extreme then $\mu \perp \la$ if and only if $\mu \perp _{RK} \la$.
\end{cor}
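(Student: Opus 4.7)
The plan is to handle the three assertions in sequence, leveraging Theorem \ref{RKac} and Theorem \ref{LDviaRKne} together with the monotonicity of $\scr{H}^+(\cdot)$ under domination from Theorem \ref{RKdominate}. No deep new idea is needed; the proof is essentially an assembly of the machinery already developed.

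First, I would establish the unconditional implication $\mu \perp_{RK} \la \Rightarrow \mu \perp \la$. Let $\mu = \mu_{ac} + \mu_s$ be the classical Lebesgue decomposition of $\mu$ with respect to $\la$. Since $\mu_{ac} \leq \mu$, Theorem \ref{RKdominate} gives $\scr{H}^+(\mu_{ac}) \subseteq \scr{H}^+(\mu)$ as sets of analytic functions in $\D$, so
$$ \mr{int}(\mu_{ac}, \la) = \scr{H}^+(\mu_{ac}) \cap \scr{H}^+(\la) \;\subseteq\; \scr{H}^+(\mu) \cap \scr{H}^+(\la) = \mr{int}(\mu, \la) = \{0\}. $$
On the other hand, since $\mu_{ac} \ll \la$, Theorem \ref{RKac} says that $\mr{int}(\mu_{ac}, \la)$ is norm-dense in $\scr{H}^+(\mu_{ac})$. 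Hence $\scr{H}^+(\mu_{ac}) = \{0\}$, forcing $\mu_{ac} = 0$, so that $\mu = \mu_s \perp \la$.

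For the equivalence under the reducing hypothesis, the direction $\mu \perp_{RK} \la \Rightarrow \mu \perp \la$ is already in hand. For the converse, suppose $\mu \perp \la$. If $\mr{int}(\mu, \la)$ is $V_\mu$-reducing, Theorem \ref{LDviaRKne} directly gives $\scr{H}^+(\mu_{ac}) = \mr{int}(\mu, \la)^{-\mu}$, and since $\mu \perp \la$ means $\mu_{ac} = 0$, we conclude $\mr{int}(\mu, \la) = \{0\}$. If instead $\mr{int}(\mu, \la)$ is $V_\la$-reducing, the roles of $\mu$ and $\la$ are swapped: the space $\mr{int}(\la, \mu) = \mr{int}(\mu, \la)$ is then $V_\la$-reducing, so Theorem \ref{LDviaRKne} applied to the Lebesgue decomposition $\la = \la_{ac} + \la_s$ of $\la$ with respect to $\mu$ yields $\scr{H}^+(\la_{ac}) = \mr{int}(\mu, \la)^{-\la}$. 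Since $\mu \perp \la$ is equivalent to $\la \perp \mu$, hence to $\la_{ac} = 0$, we again conclude $\mr{int}(\mu, \la) = \{0\}$.

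The final ``in particular'' clause is then immediate from Lemma \ref{invint}: if $\la$ is non-extreme then $\mr{int}(\mu, \la)$ is automatically $V_\mu$-reducing, and by the symmetric form of that lemma, if $\mu$ is non-extreme then $\mr{int}(\mu, \la) = \mr{int}(\la, \mu)$ is $V_\la$-reducing; either way, the preceding case applies. The only point requiring any care is the pointwise identification that justifies $\mr{int}(\mu_{ac}, \la) \subseteq \mr{int}(\mu, \la)$, but this is unproblematic because each $\scr{H}^+(\cdot)$ is a genuine Hilbert space of analytic functions on $\D$ and the embedding supplied by Theorem \ref{RKdominate} is the set-theoretic inclusion. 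I do not expect a real obstacle in this proof.
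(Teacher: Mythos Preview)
Your proof is correct and follows exactly the approach the paper intends: the corollary is stated without proof, and your argument assembles Theorem \ref{RKac} (or Proposition \ref{necessary}) for the first assertion, Theorem \ref{LDviaRKne} with the obvious symmetry swap for the second, and Lemma \ref{invint} for the ``in particular'' clause. There is nothing to add.
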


While the previous two results give quite a satisfactory description of the Lebesgue decomposition of $\mu$ with respect to $\la$ in terms of reproducing kernel theory in the case where the intersection space, $\mr{int} (\mu ,\la )$ is $V^\mu-$reducing, this is not generally the case, as the next proposition and example show. 

In the proposition statement below, recall the definition of the lattice operations $\vee, \wedge$ on positive kernels and the definition of the isometries $U_\vee : \cH (K +k ) \rightarrow \cH (K) \oplus \cH (k)$ and $U_\wedge : \cH (K \wedge k) = \cH (K) \cap \cH (k) \rightarrow \cH (K) \oplus \cH (k)$, see Subsection \ref{ss-rkhs}. 

\begin{prop} \label{notred}
If $\mu + \la$ is extreme, then $\mr{int} (\mu, \la)$ is $V_\mu-$reducing. If $\mu, \la$ are both extreme but $\mu + \la$ is non-extreme, then $\mr{int} (\mu , \la)$ is non-trivial, but not $V_\mu-$reducing and 
$$ \mr{int} (\mu , \la ) \supseteq U_\wedge ^* \bigvee _{j=1} ^\infty  (V_\mu ^* \oplus V_\la ^* ) ^j U_\vee 1. $$ 
\end{prop}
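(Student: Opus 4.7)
The plan is to lift the analysis to the isometry $V^+ := V_\mu \oplus V_\la$ on $\scr{H}^+(\mu) \oplus \scr{H}^+(\la)$ and to exploit the orthogonal decomposition $\scr{H}^+(\mu) \oplus \scr{H}^+(\la) = \nbran U_\vee \oplus \nbran U_\wedge$ supplied by the sums-and-intersections theorem. A direct kernel computation, using $V_\mu k^\mu_z \ov{z} = k^\mu_z - k^\mu_0$ together with $U_\vee k^{\mu+\la}_z = k^\mu_z \oplus k^\la_z$, yields the intertwining $V^+ U_\vee = U_\vee V_{\mu+\la}$. Hence $V^+$ always preserves $\nbran U_\vee$, and, from the adjoint identity $U_\vee^* V^{+*} = V_{\mu+\la}^* U_\vee^*$, $V^{+*}$ always preserves $\nbran U_\wedge = \nbker U_\vee^*$.

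For the first assertion, $\mu+\la$ extreme means $V_{\mu+\la}$ is unitary, so $V^+|_{\nbran U_\vee}$ is unitary and in particular surjects onto $\nbran U_\vee$. Since $V^+$ is a global isometry, it maps $\nbran U_\wedge$ into $(V^+ \nbran U_\vee)^\perp = \nbran U_\vee ^\perp = \nbran U_\wedge$. For $f \in \mr{int}(\mu,\la)$ this forces $V_\mu f \oplus -V_\la f = V^+ U_\wedge f \in \nbran U_\wedge$, so $V_\mu f = V_\la f \in \mr{int}(\mu,\la)$. Combined with the co-invariance from Lemma \ref{coinvint}, Lemma \ref{invint} then delivers $V_\mu$-reducibility.

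For the second assertion, extremity of both $\mu$ and $\la$ makes $V_\mu, V_\la$ (and hence $V^+$) unitary, while non-extremity of $\mu+\la$ is equivalent to $1 \in \scr{H}^+(\mu+\la)$ with $V_{\mu+\la}^* 1 = 0$. Write $U_\vee 1 = g \oplus h$, so $g + h = U_\vee^* U_\vee 1 = 1$. Lemma \ref{bwshift} gives $(V_\mu^* g)(z) = (g(z)-g(0))/z$ and similarly for $h$; summing and using $g+h = 1$ yields $V_\la^* h = -V_\mu^* g$, so
$$ V^{+*} U_\vee 1 \;=\; V_\mu^* g \oplus V_\la^* h \;=\; V_\mu^* g \oplus (-V_\mu^* g) \;=\; U_\wedge (V_\mu^* g) \;\in\; \nbran U_\wedge. $$
Extremity of $\mu$ rules out non-zero constants in $\scr{H}^+(\mu)$: if $g$ were constant then $g = 0$, forcing $h = 1 \in \scr{H}^+(\la)$, contradicting extremity of $\la$. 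Thus $V_\mu^* g \neq 0$ and $\mr{int}(\mu,\la) \neq \{0\}$. Iterating, using $V^{+*} \nbran U_\wedge \subseteq \nbran U_\wedge$, keeps $(V_\mu^* \oplus V_\la^*)^j U_\vee 1 \in \nbran U_\wedge = U_\wedge \mr{int}(\mu,\la)$ for every $j \geq 1$, and applying the isometry $U_\wedge^*$ gives the claimed containment.

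Finally, to see that $\mr{int}(\mu,\la)$ is not $V_\mu$-reducing, Lemma \ref{invint} (together with $\la$ extreme) identifies reducibility with the condition $V_\mu = V_\la$ on $\mr{int}(\mu,\la)$, which transferred to $\nbran U_\wedge$ reads $V^+ \nbran U_\wedge \subseteq \nbran U_\wedge$, equivalently (by adjointness) $V^{+*} \nbran U_\vee \subseteq \nbran U_\vee$. But $V^{+*} U_\vee 1$ has just been exhibited as a non-zero element of $\nbran U_\wedge$, so it is not in $\nbran U_\vee$ and the preservation fails. The main subtlety I expect is the careful bookkeeping between the two directions of the intertwining and isolating where $V^+$ being merely isometric (Part~1) versus genuinely unitary (Part~2, requiring extremity of both $\mu$ and $\la$) is invoked.
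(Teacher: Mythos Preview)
Your proof is correct and follows essentially the same route as the paper: lift everything to $V^+ = V_\mu \oplus V_\la$ via the intertwining $V^+ U_\vee = U_\vee V_{\mu+\la}$, use extremity of $\mu+\la$ to make $\nbran U_\vee$ reducing for $V^+$ (hence $\nbran U_\wedge$ reducing), and in the second case exhibit $V^{+*} U_\vee 1$ as a nonzero element of $\nbran U_\wedge$. The only stylistic difference is that the paper obtains $V^{+*} U_\vee 1 \in \nbran U_\wedge$ abstractly from $1 \perp \nbran V_{\mu+\la}$ and the intertwining, whereas you compute it concretely by writing $U_\vee 1 = g \oplus h$ with $g+h = 1$ and using the backward-shift formula to get $V_\la^* h = -V_\mu^* g$; your nonvanishing argument (ruling out $g$ constant via extremity of both measures) replaces the paper's one-line appeal to unitarity of $V^+$.
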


\begin{lemma}
Let $\mu, \la \geq 0$ be positive, finite and regular Borel measures on $\partial \D$. Consider the reproducing kernel Hilbert spaces of $\mu, \la$ and $\mu + \la-$Cauchy transforms in $\partial \D$, $\scr{H} ^+ (\mu ) = \cH (k^\mu )$, $\scr{H} ^+ (\la ) = \cH (k ^\la )$ and $\scr{H} ^+ (\mu + \la ) = \cH ( k^\mu + k^\la )$. Then, $$ U_\vee V_{\mu + \la } = V_\mu \oplus V_\la U_\vee $$ and $\nbran U_\vee$ is $V_\mu \oplus V_\la-$invariant so that $\nbran U_\wedge$ is $V_\mu \oplus V_\la$ co-invariant, and $$U_\wedge V^{\mu *} | _{\mr{int} (\mu, \la )} = U_\wedge V^{\la *} |_{\mr{int} (\mu, \la )} = (V^\mu \oplus V^\la ) ^* U_\wedge. $$ Moreover, we have that $V_\mu | _{\mr{int} (\mu, \la)} = V_\la | _{\mr{int} (\mu, \la )}$ so that $\mr{int} (\mu, \la) = \cH (k^\mu \wedge k^\la)$ is both $V_\mu$ and $V_\la-$invariant if and only if $\nbran U_\wedge$ is $V_\mu \oplus V_\la-$invariant.
\end{lemma}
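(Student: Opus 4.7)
The plan is to verify the four assertions of the lemma in sequence, with each building on the previous and reducing to essentially algebraic computations on kernel vectors.

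First, for the intertwining $U_\vee V_{\mu+\la} = (V_\mu \oplus V_\la) U_\vee$, I would test both sides on the dense subspace spanned by $\{\ov{z} k^{\mu+\la}_z : z \in \D\} \subseteq \scr{H}^+(\mu+\la)$. By Lemma \ref{bwshift}, each isometry satisfies $V_\nu(\ov{z} k^\nu_z) = k^\nu_z - k^\nu_0$ for $\nu \in \{\mu, \la, \mu+\la\}$, and the sums-and-intersections-of-RKHS theorem in Subsection \ref{ss-rkhs} gives $U_\vee k^{\mu+\la}_z = k^\mu_z \oplus k^\la_z$. Both sides then evaluate to $(k^\mu_z - k^\mu_0) \oplus (k^\la_z - k^\la_0)$, and by boundedness the identity extends to all of $\scr{H}^+(\mu+\la)$.

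The $(V_\mu \oplus V_\la)$-invariance of $\nbran U_\vee$ follows directly from this: $(V_\mu \oplus V_\la) U_\vee h = U_\vee V_{\mu+\la} h \in \nbran U_\vee$ for every $h \in \scr{H}^+(\mu+\la)$. Co-invariance of $\nbran U_\wedge$ is then immediate from the orthogonal decomposition $\nbran U_\vee = (\nbran U_\wedge)^\perp$ recorded in the same theorem, together with the elementary fact that the orthogonal complement of an invariant subspace is co-invariant.

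For the adjoint identity, Lemma \ref{coinvint} tells us that $V_\mu^*|_{\mr{int}(\mu,\la)} = V_\la^*|_{\mr{int}(\mu,\la)}$, since both act by the common backward shift on power series. So for $h \in \mr{int}(\mu,\la)$, setting $g := V_\mu^* h = V_\la^* h$, this $g$ lies in both $\scr{H}^+(\mu)$ and $\scr{H}^+(\la)$ and hence in $\mr{int}(\mu,\la)$ (with finite intersection norm, since $V_\mu^*, V_\la^*$ are contractions on the respective spaces). Then $U_\wedge V_\mu^* h = g \oplus (-g) = V_\mu^* h \oplus (-V_\la^* h) = (V_\mu \oplus V_\la)^* U_\wedge h$, and the argument with $V_\la^*$ in place of $V_\mu^*$ is identical.

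Finally, the stated equivalence is a definition-chase. For $h \in \mr{int}(\mu,\la)$, $(V_\mu \oplus V_\la) U_\wedge h = V_\mu h \oplus (-V_\la h)$, and $\nbran U_\wedge = \{g \oplus -g : g \in \mr{int}(\mu,\la)\}$ as an isometric image. Thus this element lies in $\nbran U_\wedge$ if and only if $V_\mu h = V_\la h$ and this common value belongs to $\mr{int}(\mu,\la)$; running the equivalence uniformly over $h \in \mr{int}(\mu,\la)$ yields the biconditional that $\mr{int}(\mu,\la)$ is $V_\mu$- and $V_\la$-invariant with $V_\mu|_{\mr{int}(\mu,\la)} = V_\la|_{\mr{int}(\mu,\la)}$ precisely when $\nbran U_\wedge$ is $(V_\mu \oplus V_\la)$-invariant. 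The only real delicacy is keeping track of which Hilbert space each kernel vector inhabits and verifying finiteness of the $\mr{int}$-norm in the third step; beyond that, everything is routine verification on dense subspaces of kernel vectors.
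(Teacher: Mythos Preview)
Your proposal is correct and follows essentially the same approach as the paper's proof, which simply states that ``the intertwining formulas are easily verified'' and then gives the same definition-chase for the final equivalence that you spell out. You have filled in the details the paper omits, checking the first intertwining on kernel vectors via Lemma~\ref{bwshift} and deducing the rest exactly as intended.
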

\begin{proof}
The intertwining formulas are easily verified. The range of $U _\wedge : \scr{H} ^+ (\mu + \la ) \rightarrow \scr{H} ^+ (\mu ) \oplus \scr{H} ^+ (\la)$ is $V_\mu \oplus V_\la-$reducing if and only if, for any $h \oplus -h \in \nbran U _\wedge$, $h \in \mr{int} (\mu, \la )$,
$$ V _\mu h \oplus - V_\la h = g \oplus -g, $$ for some $g \in \mr{int} (\mu, \la)$. Clearly this happens if and only if $V_\mu | _{\mr{int} (\mu, \la)} = V_\la | _{\mr{int} (\mu, \la )}$.
\end{proof}

\begin{proof}[Proof of Proposition \ref{notred}.]
If $\la + \mu$ is extreme then $\scr{H} ^+ (\mu + \la) = \scr{H} ^+ (\mu ) + \scr{H} ^+ (\la )$ does not contain the constant functions. Hence both $\la$ and $\mu$ must also be extreme. In this case $V_\mu, V_\la$ and $V_{\mu + \la}$ are all unitary operators. We know that $\nbran U_\vee$ is always $V_\mu \oplus V_\la-$invariant. On the other hand since $\mu + \la$ is extreme, $V_{\mu + \la}$ is unitary, hence surjective, and 
$$ \scr{H} ^+ ( \mu + \la ) = \bigvee (k_z ^{\mu +\la} - k_0 ^{\mu + \la} ), $$ so that 
$$ \nbran U_\vee = \bigvee (k_z ^{\mu} - k_0 ^{\mu} ) \oplus (k_z ^\la - k_0 ^\la). $$ Hence, 
$$ (V_\mu ^* \oplus V_\la ^*) \nbran U_\vee = \bigvee k_z ^\mu \ov{z} \oplus k_z ^\la \ov{z} \subseteq \nbran U_\vee. $$ It follows that $\nbran U_\vee$ is $V_\mu \oplus V_\la-$reducing, so that $\nbran U_\wedge = \nbran U_\vee ^\perp$ is also reducing. The previous lemma now implies that $\mr{int} (\mu, \la)$ is $V_\mu-$reducing. 

If, on the other hand, $\mu , \la$ are both extreme but $\mu + \la$ is not, then $V^\mu, V^\la$ are both unitary but $V^{\mu +\la}$ is not. Hence, 
since $1 \perp \nbran V^{\mu + \la}$, $1 \in \scr{H} ^+ (\mu + \la)$, we have that 
$$ U_\vee 1 \perp V ^\mu \oplus V^\la \nbran U_\vee, $$ or, equivalently, 
$$ (V ^\mu \oplus V^\la ) ^* U_\vee 1 \perp \nbran U_\vee. $$ Since $V^\mu \oplus V^\la$ is unitary, it follows that 
$$ 0 \neq (V ^\mu \oplus V^\la ) ^* U_\vee 1 \in \nbran U _\wedge, $$ so that $\mr{int} (\mu, \la ) \neq \{ 0 \}$. Since $\nbran U_\vee$ is not $V ^\mu \oplus V^\la-$reducing, neither is $\nbran U_\wedge$, and hence $\mr{int} (\mu, \la)$ is not $V_\mu-$reducing by the previous lemma.
\end{proof}

\begin{eg}[Lebesgue measure on the half circles] \label{LebesgueEg}
Let $m_{\pm}$ be normalized Lebesgue measure restricted to the upper and lower half-circles. Then $m = m_+ + m_-$, and $m_+ \perp m_-$. Note that both $m_\pm$ are extreme since $\frac{dm_\pm}{dm} = \chi _{\partial \D _\pm}$, where $\chi _\Om$ denotes the characteristic function of a Borel set, $\Om$, is not log-integrable (with respect to $m$). On the other hand, $m$ is non-extreme. By the previous proposition, $\mr{int} (m_+ , m_-) \neq \{ 0 \}$ is non-trivial, and yet $m_+ \perp m_-$. If $\mr{int} (m_+ , m_-)$ contained a non-trivial $V_+ := V^{m_+}$ or $V_-:= V^{m_-}-$reducing subspace, $\scr{M}$, then the closure, $\scr{M} ^+$ or $\scr{M} ^-$ in the norms of $\scr{H} ^+ (m _{\pm })$ would be a closed $V_+$ or $V_{-}-$reducing subspace. In the first case, Corollary \ref{reducemeas} would then imply that $\scr{M} ^+ = \scr{H} ^+ (\ga)$ for some $0 \leq \ga \leq m_+$. On the other hand, $\mr{int} (\ga , m_-) \supseteq \scr{M}$ is dense in $\scr{M} ^+ = \scr{H} ^+ (\ga )$ so that $\ga \ll _{RK} m_-$. Since RK-absolute continuity is equivalent to absolute continuity by Theorem \ref{RKac}, this contradicts the mutual singularity of $m_+$ and $m_-$. A symmetric argument shows that $\mr{int} (m_+, m_-)$ cannot contain a non-trivial $V_--$reducing subspace either.

Similarly, $m = m_+ + m_-$ can be viewed as the Lebesgue decomposition of $m$ with respect to $m_+$. In this case, $\mr{int} (m , m_+ ) = \scr{H} ^+ (m _+) \neq \{ 0 \}$ since $m _+ \leq m$. However, $\mr{int} (m , m_+)$ cannot be $S= V_m-$reducing as then its closure, $\mr{int} (m , m_+)^{-m}$ would be a closed, $S-$reducing subspace of $H^2 = \scr{H} ^+ (m)$ and the shift has no non-trivial reducing subspaces. (Hence this intersection space cannot contain any non-trivial $S-$reducing subspace.) In fact, $\mr{int} (m_+ , m_-)$ cannot (contractively) contain the space of $\ga-$Cauchy transforms of any non-zero positive measure, $\ga$, as then $\ga \ll _{RK} m_+$ and $\ga \ll _{RK} m_-$, so that $\ga \ll m_+ , m_-$ by Theorem \ref{RKac} and $\ga \equiv 0$ since $m_+$ and $m_-$ are mutually singular. Finally, we cannot have $\mr{int} (m , m_+)$ dense in $H^2$ either as this would imply that $m \ll _{RK} m_+$ which would imply that $m \ll m_+$ by Theorem \ref{RKac}. \\

We can calculate some vectors in $\mr{int} (m_+ , m_-)$ more explicitly. By the proof of Proposition \ref{notred}, we have that $V_+ ^* \oplus V_- ^* U _\vee 1 \in \nbran U _\wedge$, and since $\nbran U_\wedge$ is always $V_+ \oplus V_-$ co-invariant, 
$$ \mr{int} (m_+  , m_- ) \supseteq \bigvee _{j=1} ^\infty  V_+ ^{*j} k_0 ^+ = \bigvee V_- ^{*j} k_0 ^-. $$ Here, $1 = k^m _0$, where $m=m_+ + m_-$, so that $U_\vee 1 = k_0 ^+ \oplus k_0 ^-$. Since the unitaries $V_\pm ^*$ both act as backward shifts on power series, we can compute these elements of the intersection space explicitly. First, the kernel vectors of $\scr{H} ^+ (m _\pm )$ at $0$ are:
\ba k_0 ^+ (z) & = & \frac{1}{2\pi} \int _0 ^\pi \frac{1}{1-z e^{-i \theta}} d\theta \\
& = & \left. \frac{1}{2\pi i} \log (e ^{i\theta} -z ) \right| _{\theta =0} ^{\theta = \pi} \\
& = & \frac{1}{2\pi i} \log \left( \frac{z+1}{z-1} \right), \ea 
where $\log$ is the branch of the logarithm fixed by the choice of the argument function taking values in $[0, 2\pi)$. Here, the branch cut is along the positive real axis, and
$$ \nbre \frac{z+1}{z-1} = \frac{|z| ^2 -1}{|z-1| ^2} <0, $$ is strictly negative for any $z \in \D$ so that this formula defines a holomorphic function in $\D$. (We know, of course, that $k_0 ^+$ must be holomorphic in $\D$.) Since 
$$ 1 = k_0 (z) = k(z, 0) = k^+ (z , 0) + k^- (z,0) = k^+ _0 (z) + k^- _0 (z), $$ it follows that 
\be k_0 ^- (z) = 1 - k_0 ^+ (z) = 1 - \frac{1}{2\pi i} \log \left( \frac{z+1}{z-1} \right). \ee Also note that 
$$ \frac{1}{2} = \frac{1}{i2\pi} \log (-1), $$ so that
$\frac{1}{2} = k_0 ^+ (0) = k_0 ^- (0)$.

Since $V_\pm ^*$ act as backward shifts on power series, it follows that $V_+ ^* k_0 ^+ = -V_- ^* k_0 ^-$, so that 
$$ (V_+ \oplus V_-) ^* k_0 ^+ \oplus k_0 ^- \in \nbran U_\wedge = \bigvee _{h
\in \mr{int} ( m_+ , m_- )} h \oplus -h, $$ as required. 
\end{eg}

\subsection{Lebesgue decomposition of measures and their forms} \label{ss:measforms}

As described in Remark \ref{measform} and Subsection \ref{ss:forms}, if $\mu , \la \geq 0$ are positive, finite and regular Borel measures on the unit circle, $\partial \D$, then one can construct the Lebesgue decomposition of $\mu$ with respect to $\la$ by considering the densely--defined positive quadratic form, $\fq_\mu : \scr{C} (\partial \D ) \times \scr{C} (\partial \D) \rightarrow 0$, with dense form domain $\scr{C} (\partial \D ) \subseteq L^2 (\la )$, the continuous functions on the unit circle. Namely, applying the Simon--Lebesgue decomposition to $\fq_\mu$, viewed as a positive, densely--defined form in $L^2 (\la)$, one obtains,
$$ \fq_\mu = \fq_{\mu ; ac} + \fq _{\mu ;s}, $$ where $\fq_{\mu ;ac}$ is an absolutely continuous (closeable) form and $\fq_s$ is a singular form and moreover, 
$\fq_{\mu ; ac} = \fq_{\mu _{ac}}, \fq_s = \fq_{\mu _s}$, where $\mu = \mu _{ac} + \mu _s$ is the Lebesgue decomposition. 

However, in this paper, since we wish to apply analytic and function theoretic methods, we instead consider the positive quadratic $Z^\la-$Toeplitz form, $\fq_\mu$, associated to $\mu \geq 0$, with dense form domain $\nbdom \fq _\mu = \C [\zeta ]$ or $\nbdom \fq_\mu = A (\D )$, in $H^2 (\la ) \subseteq L^2 (\la )$. As we will show, if $\fq _\mu = \fq_{ac} + \fq_{s}$ is the Simon--Lebesgue form decomposition of $\fq_\mu$ in $H^2 (\la)$, then one can define reproducing kernel Hilbert spaces of $\fq_{ac}$ and $\fq_s-$Cauchy transforms, $\scr{H} ^+ (\fq_{ac})$ and $\scr{H} ^+ (\fq_s)$. The goal of this subsection is to compare the Lebesgue decomposition of $\mu$ with respect to $\la$ with the Simon--Lebesgue decomposition of $\fq_\mu$ in $H^2 (\la )$. 

Let $\mu, \la \geq 0$ be finite and regular Borel measures on $\partial \D$. Consider the positive quadratic form, $\fq_\mu$, with dense form domain, $A (\D ) \subseteq H^2 (\la )$. Observe that $\hat{\cH} (\fq_\mu ) = H^2 (\mu + \la)$ so that $\C [\zeta ]$ and $\scr{K} _\D$ are both dense sets in this space. Consider the Simon--Lebesgue decomposition, $\fq_\mu = \fq_{ac} + \fq_s$, of $\fq_\mu$ in $H^2 (\la )$. By Theorem \ref{formLD}, $\fq_{ac} \geq 0$, is the largest closeable quadratic form bounded above by $\fq_\mu$. Since $\fq_{ac} \leq \fq_\mu$, this implies that $\nbdom \fq_\mu = A (\D ) \subseteq \nbdom \fq_{ac}$, and if $\fq_D = \ov{\fq_{ac}}$ denotes the closure of $\fq_{ac}$, then $A (\D )$ must be a form-core for the closed form $\fq_D$ by the maximality statement in Theorem \ref{formLD}. We define $H^2 (\fq_{ac} ), H^2 ( \fq_s )$ as the Hilbert space completion of the disk algebra, $A (\D)$, modulo vectors of zero length, with respect to the pre-inner products, $\fq_{ac}, \fq_s$, respectively. Since $0 \leq \fq_{ac}, \fq_s \leq \fq_\mu$, we can define the contractive co-embeddings $E_{ac} : H^2 (\mu ) \hookrightarrow H^2 ( \fq_{ac}) $ and $E_s : H^2 (\mu ) \hookrightarrow H^2 (\fq_s)$ by $E_{ac} a = a \in H^2 (\fq_{ac} )$ and $E_s a = a \in H^2 (\mu _s)$. (Here, an element $a \in A (\D )$ could be equal to $0$ as an element of $H^2 (\mu)$, or as an element of the spaces $H^2 (\fq_{ac}), H^2 (\fq_s )$. However, the inequality $0 \leq \fq_{ac} ,\fq_s \leq \fq_\mu$, ensures that if $a \in A (\D)$ is zero as an element of $H^2 (\mu)$, \emph{i.e.} it vanishes $\mu-a.e.$, then $a =0$ as element of both $H^2 (\fq_{ac} )$ and $H^2 (\fq_s)$. A more precise notation would be to let $N_{ac}$ denote the subspace of all elements of $A (\D)$ of zero-length with respect to the $\fq_{ac}-$pre-inner product so that equivalence classes of the form $a + N_{ac}$, $a \in A (\D )$, are dense in $H^2 (\fq_{ac} )$. )

Observe that if $\scr{D} \subseteq A (\D )$ is any supremum-norm dense set, such as $\scr{K} _\D = \bigvee k_z$ or $\C [\zeta ]$, then $\scr{D}$ is dense in $H^2 ( \mu)$, and since the co-embedding $E_{ac} : H^2 (\mu ) \hookrightarrow H^2 (\fq_{ac})$ is a contraction with dense range, $\scr{D}$ will be dense in $H^2 (\fq_{ac})$ and it will be similarly dense in $H^2 (\fq_s)$.

\begin{lemma}
If $\fq_D = \ov{\fq_{ac}}$ is the closure of $\fq_{ac}$, and $\scr{D} \subseteq A (\D )$ is supremum-norm dense, then $\scr{D}$ is a core for $\sqrt{D}$.
\end{lemma}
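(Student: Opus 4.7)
The plan is to invoke the equivalence noted at the end of Subsection~\ref{ss:forms}: a subset of $\nbdom \sqrt{D}$ is a core for $\sqrt{D}$ if and only if it is a form-core for $\fq_D$, meaning dense in $\nbdom \fq_D = \hat{\cH}(\fq_D)$ with respect to the norm induced by $\fq_D(\cdot,\cdot) + \ip{\cdot}{\cdot}_{H^2(\la)}$. The paragraph immediately preceding the lemma already records that $A(\D)$ itself is a form-core for $\fq_D$, by the maximality statement in Theorem~\ref{formLD}. So the task reduces to showing that any supremum-norm dense $\scr{D} \subseteq A(\D)$ is also dense in $A(\D)$ in this weaker form norm; the conclusion then follows from a two-step density argument.

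For this reduction, the key input is the domination $\fq_D \leq \fq_\mu$ on $A(\D)$. Since $\fq_D = \ov{\fq_{ac}}$ is the closure of $\fq_{ac}$, it agrees with $\fq_{ac}$ on $\nbdom \fq_{ac} \supseteq A(\D)$, and the Simon--Lebesgue decomposition gives $\fq_{ac} \leq \fq_\mu$. Hence for any $a \in A(\D)$,
\[ (\fq_D + \mf{id})(a,a) \;\leq\; \fq_\mu(a,a) + \| a \|_{H^2(\la)}^2 \;=\; \int_{\partial \D} |a(\zeta)|^2 \, (\mu + \la)(d\zeta) \;\leq\; \| a \|_\infty^2 \, (\mu + \la)(\partial \D). \]
Thus the formal identity map from $(A(\D), \| \cdot \|_\infty)$ into $(A(\D), \sqrt{\fq_D + \mf{id}})$ is bounded, so supremum-norm density of $\scr{D}$ in $A(\D)$ automatically upgrades to density in the form norm. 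Chaining this with the form-core property of $A(\D) \subseteq \nbdom \fq_D$ yields density of $\scr{D}$ in $\nbdom \fq_D$ in the form norm, i.e., $\scr{D}$ is a form-core for $\fq_D$ and hence a core for $\sqrt{D}$.

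There is really no substantive obstacle here: the whole argument is a soft consequence of (i) the fact that taking the closure of a closeable form leaves its original domain as a form-core, and (ii) the trivial majorization of an $L^2$-norm against a finite measure by the uniform norm. The only point that warrants any care is to make sure the inequality $\fq_D \leq \fq_\mu$ is applied \emph{only} on $A(\D) = \nbdom \fq_\mu$, since $\nbdom \fq_D$ may be strictly larger than $\nbdom \fq_\mu$ and $\fq_\mu$ is not defined outside $A(\D)$.
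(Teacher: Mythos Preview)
Your proposal is correct and follows essentially the same approach as the paper's own proof: both argue that $A(\D)$ is a form-core for $\fq_D$ by maximality, then use the domination $\fq_{ac} \leq \fq_\mu$ on $A(\D)$ together with the trivial bound $\|a\|_{H^2(\mu+\la)}^2 \leq \|a\|_\infty^2 (\mu+\la)(\partial\D)$ to upgrade supremum-norm density of $\scr{D}$ in $A(\D)$ to density in the $\fq_D + \mf{id}$ norm. The paper simply writes out the computation for a sequence $x_n \to a$ explicitly, whereas you phrase it as boundedness of the identity map between normed spaces, but the content is identical.
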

\begin{proof}
Since $\nbdom \fq_\mu = A (\D )$, and $\fq_{ac} \leq \fq_\mu$ is the largest closeable and positive semi-definite quadratic form, $A (\D)$ is a form-core for $\fq_D$, and hence a core for $\sqrt{D}$. Hence, $A (\D )$ is dense in $\hat{\cH} (\fq_D) = \hat{\cH} (\fq_{ac} )$. Given any $a \in A (\D)$, let $x_n \in \scr{D}$ be a sequence which converges to $a$ in supremum-norm. Then 
\ba 0 & \leq & \| x_n - a \| ^2 _{H^2 (\la )} + \fq_{ac} (x_n -a , x_n -a ) \\
& \leq & \| x_n -a \| ^2 _{H^2 (\la)} + \| x_n - a \| ^2 _{H^2 (\mu )}  \\
& \leq & \| x_n -a \| ^2 _{\infty} (\mu (\partial \D) + \la (\partial \D ) ) \rightarrow 0. \ea 
This proves that $\scr{D}$ is dense in the dense subspace $A (\D ) \subseteq \hat{\cH} (\fq_D)$, and hence $\scr{D}$ is a form-core for $q_D$ and a core for $\sqrt{D}$.
\end{proof}

Given any $h \in H^2 (\fq_{ac} )$ or in $H^2 (\fq_s)$, we can now define the $\fq_{ac}$ or $\fq_s-$Cauchy transform of $h$ as before:
$$ (\scr{C} _{ac} h ) (z) := \fq_{ac} ( k_z , h ), $$ and similarly for $\fq_s$. As in Lemma \ref{lemholo} and Lemma \ref{lemCT}, Cauchy transforms of elements of $H^2 (\fq_{ac}) , H^2 (\fq_s)$ are holomorphic in the unit disk, and if we equip the vector space of $\fq_{ac}-$Cauchy transforms with the inner product 
$$ \ip{ \scr{C} _{ac} x}{\scr{C} _{ac} y}_{ac} := \fq_{ac} (x,y), $$ we obtain a reproducing kernel Hilbert space of analytic functions in the disk, $\scr{H} ^+ (\fq _{ac} )$ with reproducing kernel:
$$ k^{(ac)} (z,w) := \fq_{ac} (k_z , k_w). $$
Finally, since $\fq_\mu = \fq_{ac} + \fq_s$, $\fq_\mu \geq \fq_{ac}, \fq_s \geq 0$, we obtain the following.
\begin{prop} \label{decompid}
The RKHS of $\fq_{ac}$ and $\fq_s-$Cauchy transforms are contractively contained in $\scr{H} ^+ (\mu) = \scr{H} ^+ (q_\mu )$ and $k^\mu = k^{(ac)} + k ^s$ so that 
$$ \scr{H} ^+ (\mu ) = \scr{H} ^+ (\fq_{ac} ) + \scr{H} ^+ (\fq_s). $$
Moreover, if $\mr{e} _{ac} : \scr{H} ^+ (\fq _{ac} ) \hookrightarrow \scr{H} ^+ (\mu)$ and $\mr{e} _s$ are the contractive embeddings, then 
$$ I_{\scr{H} ^+ (\mu )} = \mr{e} _{ac} \mr{e} _{ac} ^* + \mr{e} _s \mr{e} _s ^*.$$ 
\end{prop}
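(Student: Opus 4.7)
The plan is to reduce the statement to a direct application of Aronszajn's sums of kernels theorem, together with the complementary space corollary that immediately follows Theorem \ref{compRKHS}. The only substantive step is to transfer the form identity $\fq_\mu = \fq_{ac} + \fq_s$ into the kernel identity $k^\mu = k^{(ac)} + k^s$.

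First I would verify that for every $z \in \D$, the Szeg\H{o} kernel $k_z(\zeta) = (1-\bar{z}\zeta)^{-1}$ belongs to $A(\D) = \nbdom \fq_\mu$, which is clear since $k_z$ extends continuously to $\partial \D$. Since the kernel of $\scr{H} ^+ (\mu )$ satisfies $k^\mu (z,w) = \ip{k_z}{k_w}_{L^2 (\mu )} = \fq_\mu(k_z, k_w)$ by Lemma \ref{lemCT}, and by construction $k^{(ac)}(z,w) = \fq_{ac}(k_z,k_w)$ and $k^s(z,w) = \fq_s(k_z,k_w)$, the form identity $\fq_\mu = \fq_{ac} + \fq_s$ evaluated on pairs of Szeg\H{o} kernels gives
\[
k^\mu (z,w) = \fq_{ac}(k_z,k_w) + \fq_s(k_z,k_w) = k^{(ac)}(z,w) + k^s(z,w).
\]

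Next, since $0 \leq \fq_{ac}, \fq_s \leq \fq_\mu$, evaluating on arbitrary finite linear combinations of Szeg\H{o} kernels shows that $0 \leq k^{(ac)}, k^s \leq k^\mu$ as positive kernel functions on $\D$. Aronszajn's inclusion theorem then yields contractive embeddings $\mr{e}_{ac}: \scr{H} ^+ (\fq_{ac}) \hookrightarrow \scr{H} ^+ (\mu)$ and $\mr{e}_s: \scr{H} ^+ (\fq_s) \hookrightarrow \scr{H} ^+ (\mu)$. Applying Aronszajn's sums of kernels theorem to the decomposition $k^\mu = k^{(ac)} + k^s$ then gives the set-theoretic identity $\scr{H} ^+ (\mu) = \scr{H} ^+ (\fq_{ac}) + \scr{H} ^+ (\fq_s)$.

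Finally, for the resolution of the identity, I would appeal to the Corollary following Theorem \ref{compRKHS} with $k = k^{(ac)}$ and $K = k^s$, which states precisely that for the contractive embeddings of $\cH(k)$ and $\cH(K)$ into $\cH(k+K)$, one has $I_{\cH(k+K)} = \mr{e}\mr{e}^* + \mr{j}\mr{j}^*$. Equivalently, by Theorem \ref{compRKHS}, the complementary space $\scr{R}^c(\mr{e}_{ac})$ of $\scr{R}(\mr{e}_{ac}) = \scr{H} ^+ (\fq_{ac})$ in $\scr{H} ^+ (\mu)$ is the RKHS with kernel $k^\mu - k^{(ac)} = k^s$, which is exactly $\scr{H} ^+ (\fq_s)$, so that $\mr{j}\mr{j}^* = I - \mr{e}_{ac}\mr{e}_{ac}^*$ with $\mr{j} = \mr{e}_s$. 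There is no real obstacle here; the content of the proposition is essentially the observation that the Simon--Lebesgue form decomposition passes to a reproducing kernel decomposition via the evaluation of forms on Szeg\H{o} kernels, after which the conclusion is a packaging of standard Aronszajn--de~Branges--Rovnyak complementation.
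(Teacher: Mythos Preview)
Your proposal is correct and follows essentially the same route as the paper: both deduce $k^\mu = k^{(ac)} + k^s$ by evaluating the form identity $\fq_\mu = \fq_{ac} + \fq_s$ on Szeg\H{o} kernels, and then extract the contractive containments and the identity decomposition from this. The only cosmetic difference is that you invoke the Corollary after Theorem \ref{compRKHS} as a black box for $I = \mr{e}_{ac}\mr{e}_{ac}^* + \mr{e}_s\mr{e}_s^*$, whereas the paper verifies this directly via the computation $\ip{k_z^\mu}{(\mr{e}_{ac}\mr{e}_{ac}^* + \mr{e}_s\mr{e}_s^*)k_w^\mu}_\mu = k^{(ac)}(z,w) + k^s(z,w) = k^\mu(z,w)$, using $\mr{e}_{ac}^* k_z^\mu = k_z^{(ac)}$ and $\mr{e}_s^* k_z^\mu = k_z^s$.
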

\begin{proof}
To check the decomposition of the identity, it suffices to calculate
\ba k ^\mu (z,w) & = & k^{(ac)} (z,w) + k^s (z,w) \\
& = & \ip{k_z ^{(ac)}}{k_w ^{(ac)}}_{ac} + \ip{k_z ^s}{k_w ^s}_s  \\
& = & \ip{\mr{e} _{ac} ^* k_z ^\mu}{\mr{e} _{ac} ^* k ^\mu _w}_{ac} + \ip{\mr{e} _{s} ^* k_z ^\mu}{\mr{e} _{s} ^* k ^\mu _w}_{s} \\
& = & \ip{k_z ^\mu}{(\mr{e}_{ac} \mr{e}_{ac} ^* + \mr{e} _s \mr{e} _s ^* )k_w ^\mu}_\mu. \ea 
\end{proof}

\begin{thm} \label{rkhsform}
Let $\fq_\mu = \fq_{ac} + \fq_s$ be the Simon--Lebesgue decomposition of the form $\fq_\mu$ with dense form domain $A (\D )$ in $H^2 (\la )$. Then, 
$$ \scr{H} ^+ (\fq_{ac} ) = \mr{int} (\mu , \la) ^{ -\mu } = \left( \scr{H} ^+ (\mu ) \cap \scr{H} ^+ (\la ) \right) ^{ -\| \cdot \| _\mu}. $$ If $\mr{e} : \mr{int} (\mu , \la ) \subseteq \scr{H} ^+ (\mu )\hookrightarrow \scr{H} ^+ (\la )$ is the closed embedding and $\fq_D = \ov{\fq_{ac}}$, then 
$$ D = \scr{C} _\la ^* \mr{e} \mr{e} ^* \scr{C} _\la. $$
\end{thm}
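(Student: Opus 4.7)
The plan is to reduce the theorem to the operator identity $(I+\mr{e}\mr{e}^*)^{-1} = \mr{e}_\la^*\mr{e}_\la$ on $\scr{H}^+(\la)$, where $\mr{e}_\la := \mr{e}_{\la,\mu+\la}$ is the contractive embedding of $\scr{H}^+(\la)$ into $\scr{H}^+(\mu+\la)$ furnished by $\la \leq \mu+\la$ and Aronszajn's inclusion theorem. Since the $\fq_\mu + \mf{id}$ inner product on $A(\D)$ coincides with the $(\mu+\la)$-inner product, $\hat{\cH}(\fq_\mu) = H^2(\mu+\la)$, and the contractive co-embedding $E_{\fq_\mu}: H^2(\mu+\la) \hookrightarrow H^2(\la)$ extends the identity on $A(\D)$; by Corollary \ref{bddRN} it factors as $E_{\fq_\mu} = \scr{C}_\la^*\mr{e}_\la^*\scr{C}_{\mu+\la}$. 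Theorem \ref{RNformula} then gives $(I+D)^{-1} = E_{\fq_\mu}E_{\fq_\mu}^* = \scr{C}_\la^*\mr{e}_\la^*\mr{e}_\la\scr{C}_\la$, and once the displayed identity is established, $D = \scr{C}_\la^*\mr{e}\mr{e}^*\scr{C}_\la$ follows by uniqueness of self-adjoint operators with prescribed resolvent.

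Setting $A := \mr{e}_\la^*\mr{e}_\la$ and $B := \mr{e}_\mu^*\mr{e}_\la$ with $\mr{e}_\mu := \mr{e}_{\mu,\mu+\la}$, Theorem \ref{oprancomp} applied to $k^{\mu+\la} = k^\mu + k^\la$ yields the resolution $I = \mr{e}_\mu\mr{e}_\mu^* + \mr{e}_\la\mr{e}_\la^*$ on $\scr{H}^+(\mu+\la)$. The critical step --- and principal obstacle of the proof --- is to establish the sharp containment $\nbran B \subseteq \mr{int}(\mu,\la)$ inside $\scr{H}^+(\mu)$, and not merely in its closure. To this end I invoke the sums-and-intersections theorem from Subsection \ref{ss-rkhs}: writing $\mr{e}_\mu = U_\vee^* J_\mu$ and $\mr{e}_\la = U_\vee^* J_\la$, where $J_\mu, J_\la$ are the canonical injections into $\scr{H}^+(\mu) \oplus \scr{H}^+(\la)$, and combining the orthogonal-complement identity $U_\vee U_\vee^* + U_\wedge U_\wedge^* = I$ with $J_\mu^* J_\la = 0$, a direct computation gives $B = -J_\mu^* U_\wedge U_\wedge^* J_\la = \iota_\mu\iota_\la^*$, where $\iota_\mu, \iota_\la$ are the contractive inclusions of $\mr{int}(\mu,\la)$ equipped with the $\wedge$-inner product into $\scr{H}^+(\mu)$ and $\scr{H}^+(\la)$ respectively. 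Hence $\nbran B \subseteq \iota_\mu(\mr{int}(\mu,\la)) = \mr{int}(\mu,\la)$, the desired containment.

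With this in hand, for any $h \in \scr{H}^+(\la)$ the decomposition $\mr{e}_\la h = \mr{e}_\mu(Bh) + \mr{e}_\la(Ah)$ in $\scr{H}^+(\mu+\la)$ reads, using $Bh \in \mr{int}(\mu,\la) \subseteq \scr{H}^+(\la)$, as the vector identity $h = Bh + Ah$ in $\scr{H}^+(\la)$. A short adjoint calculation shows that for any $v \in \mr{int}(\mu,\la)$ both $\langle v, \mr{e}^*(Ah)\rangle_\mu$ and $\langle v, Bh\rangle_\mu$ reduce to $\langle v, h\rangle_{\mu+\la}$; since $\mr{e}^*(Ah) - Bh$ lies in $\mr{int}(\mu,\la)^{-\mu}$ and is $\perp_\mu \mr{int}(\mu,\la)$, it vanishes, giving $\mr{e}^*(Ah) = Bh$. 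Because $Bh \in \nbdom \mr{e} = \mr{int}(\mu,\la)$, we obtain $Ah \in \nbdom \mr{e}\mr{e}^*$ with $\mr{e}\mr{e}^*(Ah) = Bh$, so $(I + \mr{e}\mr{e}^*)(Ah) = h$ for every $h \in \scr{H}^+(\la)$. Injectivity of $I + \mr{e}\mr{e}^*$ then forces $A = (I + \mr{e}\mr{e}^*)^{-1}$, completing the formula $D = \scr{C}_\la^*\mr{e}\mr{e}^*\scr{C}_\la$. The identification $\scr{H}^+(\fq_{ac}) = \mr{int}(\mu,\la)^{-\mu}$ follows at once by computing reproducing kernels: since $k_z \in A(\D) \subseteq \nbdom \fq_{ac}$, $\fq_{ac}(k_z,k_w) = \fq_D(k_z,k_w)$, and a direct adjoint computation gives $\mr{e}^* k_z^\la = P_\mu k_z^\mu$ with $P_\mu$ the orthogonal projection onto $\mr{int}(\mu,\la)^{-\mu}$ in $\scr{H}^+(\mu)$, one obtains $k^{(ac)}(z,w) = \langle P_\mu k_z^\mu, P_\mu k_w^\mu\rangle_\mu = (P_\mu k_w^\mu)(z)$, which is precisely the reproducing kernel of $\mr{int}(\mu,\la)^{-\mu}$ as a closed subspace of $\scr{H}^+(\mu)$; Aronszajn--Moore then completes the identification of spaces.
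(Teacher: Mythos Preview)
Your proof is correct and takes a genuinely different route from the paper's. The paper argues by a soft sandwich: it first shows $\scr{C}_{ac}(\nbdom B)\subseteq\mr{int}(\mu,\la)$ (where $\fq_B=\ov{\fq_{ac}}$) so that $\mr{int}(\fq_{ac},\la)$ is dense in $\scr{H}^+(\fq_{ac})$, then uses maximality of $\fq_{ac}$ in the Simon--Lebesgue decomposition to obtain the kernel inequality $k^{\mu\cap\la}\leq k^{ac}$, and finally observes that the composite contractive embedding $\mr{int}(\mu,\la)^{-\mu}\hookrightarrow\scr{H}^+(\fq_{ac})\hookrightarrow\scr{H}^+(\mu)$ is isometric, forcing both factors to be isometric and hence both spaces to coincide. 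The operator identity $D=\scr{C}_\la^*\mr{e}\mr{e}^*\scr{C}_\la$ falls out at the end from $k^{\mu\cap\la}=k^{ac}$ together with Kato's uniqueness.

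Your argument instead goes straight for the resolvent identity $(I+\mr{e}\mr{e}^*)^{-1}=\mr{e}_\la^*\mr{e}_\la$ via Theorem~\ref{RNformula}, which the paper proves but does not invoke here. The key technical ingredient you supply---that $\nbran(\mr{e}_\mu^*\mr{e}_\la)\subseteq\mr{int}(\mu,\la)$ \emph{exactly}, not merely in closure---is extracted cleanly from the sums-and-intersections isometry $U_\wedge$ and the factorization $\mr{e}_\mu^*\mr{e}_\la=\iota_\mu\iota_\la^*$. This is a sharper structural fact than anything the paper's proof isolates, and it makes the remainder of your argument a direct operator computation rather than an order-theoretic comparison. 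Once $D$ is identified, you recover the space identification by computing $k^{(ac)}(z,w)=\fq_D(k_z,k_w)=\langle\mr{e}^*k_z^\la,\mr{e}^*k_w^\la\rangle_\mu=\langle P_\mu k_z^\mu,P_\mu k_w^\mu\rangle_\mu$, which is the kernel of the closed subspace $\mr{int}(\mu,\la)^{-\mu}$. Your approach is more computational and arguably more transparent about \emph{why} the formula for $D$ holds; the paper's approach has the virtue of not needing the explicit range containment $\nbran B\subseteq\mr{int}(\mu,\la)$, relying instead on density and maximality. One minor expository point: when you write ``both $\langle v,\mr{e}^*(Ah)\rangle_\mu$ and $\langle v,Bh\rangle_\mu$ reduce to $\langle v,h\rangle_{\mu+\la}$,'' the first expression presupposes $Ah\in\nbdom\mr{e}^*$; the cleaner order is to first compute $\langle\mr{e}v,Ah\rangle_\la=\langle\mr{e}_\la v,\mr{e}_\la h\rangle_{\mu+\la}=\langle\mr{e}_\mu v,\mr{e}_\la h\rangle_{\mu+\la}=\langle v,Bh\rangle_\mu$, which simultaneously establishes $Ah\in\nbdom\mr{e}^*$ and $\mr{e}^*(Ah)=Bh$.
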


\begin{lemma} \label{leqcore}
Let $\fq_1, \fq_2$ be densely--defined, closed and positive semi-definite quadratic forms in a separable, complex Hilbert space, $\cH$. Then $\fq_1 \leq \fq_2$ if and only if $\fq_1 (x,x) \leq \fq_2 (x,x)$ for all $x$ in a form-core for $\fq_2$.
\end{lemma}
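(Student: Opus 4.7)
The forward direction is immediate: if $\fq_1 \leq \fq_2$ then $\nbdom \fq_2 \subseteq \nbdom \fq_1$ and the inequality holds on all of $\nbdom \fq_2$, in particular on any form-core $\scr{D} \subseteq \nbdom \fq_2$. The content of the lemma is the converse.

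For the reverse direction, fix any $x \in \nbdom \fq_2$. Since $\scr{D}$ is a form-core for $\fq_2$, that is, dense in the Hilbert space $\hat{\cH} (\fq_2)$ completion of $\nbdom \fq_2$ under the $\fq_2 + \mf{id}-$inner product, I can choose a sequence $(x_n) \subseteq \scr{D}$ with $x_n \to x$ in $\hat{\cH} (\fq_2)$. In particular, $x_n \to x$ in $\cH$, and $(x_n)$ is Cauchy with respect to the $\fq_2 + \mf{id}-$norm, so $\fq_2 (x_n - x_m, x_n - x_m) \to 0$. Since the form-core $\scr{D}$ is a linear subspace, the differences $x_n - x_m$ lie in $\scr{D}$, and the standing hypothesis yields
\[ \fq_1 (x_n - x_m, x_n - x_m) \leq \fq_2 (x_n - x_m, x_n - x_m) \longrightarrow 0. \]
Combined with $\| x_n - x_m \|_{\cH} \to 0$, this shows $(x_n)$ is also Cauchy in the $\fq_1 + \mf{id}-$norm.

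Now I invoke the closedness of $\fq_1$: the form domain $\nbdom \fq_1$ is complete in the $\fq_1 + \mf{id}-$norm, so $(x_n)$ converges to some $y \in \nbdom \fq_1$ in this norm. Consequently $x_n \to y$ in $\cH$, but also $x_n \to x$ in $\cH$, forcing $y = x \in \nbdom \fq_1$. This establishes inclusion $\nbdom \fq_2 \subseteq \nbdom \fq_1$. Finally, passing to limits in $\fq_1(x_n, x_n) \leq \fq_2(x_n, x_n)$, using continuity of $\fq_1$ on its domain with respect to the $\fq_1 + \mf{id}-$norm on one side and of $\fq_2$ with respect to its own graph norm on the other, gives $\fq_1 (x,x) \leq \fq_2(x,x)$, completing the proof that $\fq_1 \leq \fq_2$.

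The only delicate point is the transfer of the Cauchy property from the $\fq_2$-norm to the $\fq_1$-norm, which is precisely where the hypothesis is applied to the \emph{differences} $x_n - x_m$; this uses that a form-core may be taken to be a linear subspace (passing to the linear span, if necessary, preserves both density in $\hat{\cH}(\fq_2)$ and the pointwise inequality for self-inner products via sesquilinearity of $\fq_2 - \fq_1$ restricted to $\scr{D}$). Everything else is a direct application of the definition of closedness of a positive semi-definite form as completeness in its graph norm.
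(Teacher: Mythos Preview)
The paper states this lemma without proof, so there is nothing to compare against; your argument is the standard one and is correct in all essential respects.

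One small caution about your closing parenthetical. You assert that if the inequality $\fq_1(x,x)\leq\fq_2(x,x)$ holds on a set $\scr{D}$, then ``sesquilinearity of $\fq_2-\fq_1$'' lets you extend the inequality to the linear span of $\scr{D}$. This is not true in general: a Hermitian sesquilinear form can be nonnegative at each vector of a spanning set yet fail to be nonnegative on their span (think of the $2\times 2$ Hermitian matrix $\bsm 1 & 2 \\ 2 & 1 \esm$, which is nonnegative on $e_1,e_2$ but not on $e_1-e_2$). Fortunately this is irrelevant here: in the paper (and in standard usage, e.g.\ Kato) a form-core is already a linear subspace, and every form-core used in the paper ($\C[\zeta]$, $\scr{K}_\D$, $A(\D)$) is one. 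With that reading, your application of the hypothesis to the differences $x_n-x_m$ is perfectly legitimate, and the rest of the argument---Cauchy transfer, completeness of $\nbdom\fq_1$ in the $\fq_1+\mf{id}$ norm, uniqueness of $\cH$-limits, and passage to the limit in the inequality---is exactly right.
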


\begin{proof}{ (of Theorem \ref{rkhsform})}
First, since $\fq_{ac}$ is closeable, $\ov{\fq}_{ac} = \fq_B$ for some closed, self-adjoint operator $B \geq 0$. By construction, $A (\D ) \subseteq \nbdom \sqrt{B}$, and $\C [ \zeta ]$, $\scr{K} _\D = \bigvee k_z$ and $A (\D )$ are all cores for $\sqrt{B}$. Since $B \geq 0$ is closed, $\nbdom B$ is also a core for $\sqrt{B}$. It follows that we can identify $\nbdom B$ with a dense subspace of $H^2 (\fq _{ac} )$. Namely, if $x \in \nbdom B \subseteq H^2 ( \la )$, we can find $a_n \in A (\D )$ so that $a_n \rightarrow x$ in $H^2 (\la )$ and $\sqrt{B} a_n \rightarrow \sqrt{B} x$. Since $\ov{\fq_{ac}} = \fq _B$, it follows that $(a_n)$ is a Cauchy sequence in $H^2 (\fq _{ac} )$, and we can identify $x \in \nbdom B$ with the limit, $\hat{x}$, of this Cauchy sequence in the Hilbert space $H^2 (\fq _{ac} )$. Finally, since $\nbdom B$ is a core for $\sqrt{B}$, for any $a \in A (\D ) \subseteq \nbdom \sqrt{B}$, we can find $x_n \in \nbdom B$ so that $x_n \rightarrow a$ and $\sqrt{B} x_n \rightarrow \sqrt{B} a$ and it follows that $\hat{x} _n \rightarrow a$ in $H^2 (\fq _{ac} )$, so that $\nbdom B$ can be identified with a dense subspace of $H^2 (\fq _{ac} )$.

Furthermore, we can then define the $\fq_{ac}-$Cauchy transform of any $x \in \nbdom B$, 
\ba (\scr{C} _{ac} x ) (z) & = & \lim _{n \uparrow \infty} \fq_{ac} (k_z , a_n) = \lim \ip{\sqrt{B} k_z}{\sqrt{B} a_n }_{H^2 (\la )} \\
& = & \ip{\sqrt{B} k_z}{\sqrt{B} x}_{H^2 (\la)}  \\
& = & \ip{k_z}{Bx}_{H^2 (\la )} = (\scr{C} _\la Bx) (z). \ea  
This proves that $\scr{C} _{ac} x \in \scr{H} ^+ (\la )$. Since $\scr{C} _{ac} x \in \scr{H} ^+ (\fq_{ac} ) \subseteq \scr{H} ^+ (\mu )$, it follows that 
$\scr{C} _{ac} \nbdom B \subseteq \mr{int} (\mu , \la )$. Moreover, since $\nbdom B$ can be identified with a dense subspace of $H^2 (\fq_{ac} )$, it follows that $\scr{C} _{ac} \nbdom B \subseteq \scr{H} ^+ (\fq_{ac} ) \cap \scr{H} ^+ (\la ) \subseteq \mr{int} (\mu ,\la )$ is dense in $\scr{H} ^+ (\fq_{ac} )$.

Now consider $\fq_D$, where $D = \scr{C} _\la ^* \mr{e} \mr{e} ^* \scr{C} _\la $ and $\mr{e} : \mr{int} (\mu, \la ) \subseteq \scr{H} ^+ (\mu ) \hookrightarrow \scr{H} ^+ (\la )$, as in the theorem statement. By construction, $\scr{K} _\D$ is a core for $\sqrt{D}$, and it is also a core for $B$, so that this set is a form-core for both $\fq_B = \ov{\fq_{ac}}$ and $\fq_D$. It follows that $\fq_D | _{\scr{K} _\D} \leq \fq_\mu | _{\scr{K} _\D}$ is a positive closeable form so that by maximality and Lemma \ref{leqcore}, $\fq_D \leq \fq_B$ in the form-sense. Also, by construction, $\mr{e} ^* k^\la _z = k ^{\mu \cap \la} _z$, where $k^{\mu \cap \la}$ is the reproducing kernel for the closed subspace $\mr{int} (\mu, \la ) ^{-\mu} \subseteq \scr{H} ^+ (\mu )$. Hence, for any finite subset, $\{ z_1, \cdots , z_n \} \subseteq \D$, if we consider any finite linear combination of Szeg\"o kernels, 
$$ h = \sum _{i=1} ^n c_i k _{z_i}, $$ then 
\ba 0 & \leq & \sum \ov{c_i} c_j k^{\mu \cap \la} (z_i , z_j ) \\ 
& = & \sum \ov{c_i} c_j \ip{k ^{\mu \cap \la} _{z_i} }{k^{\mu \cap \la} _{z_j}}_\mu \\
& = & \fq _D (h , h) \leq \fq _B (h, h) \\
& = & \sum \ov{c_i} c_j \fq _B (k_{z_i} , k_{z_j} ) \\
& = & \sum \ov{c_i} c_j k^{ac} (z_i , z_j ). \ea
That is,
$$ 0 \leq  [k ^{\mu \cap \la} (z_i ,z_j) ] _{1\leq i,j \leq n} =  [\fq_D (k_{z_i} ,k_{z_j}) ] \leq [ \fq_B (k_{z_i} , k_{z_j}) ] = [k ^{ac} (z_i,z_j)], $$
so that $k^{\mu \cap \la} \leq k^{ac}$, and by Aronszajn's inclusion theorem, $\mr{int} (\mu , \la ) ^{-\mu }$ is contractively contained in $\scr{H} ^+ (\fq_{ac} )$ which is in turn contractively contained in $\scr{H} ^+ (\mu )$. Hence, if $\mr{e} _1$ is the first embedding into $\scr{H} ^+ (\fq_{ac} )$ and $\mr{e} _2$ is the second embedding into $\scr{H} ^+ (\mu )$, the composite embedding, $\mr{e} = \mr{e} _2 \mr{e} _1 : \mr{int} (\mu ,\la ) ^{-\mu} \hookrightarrow \scr{H} ^+ (\mu )$ is again a contractive embedding and it must be isometric since $\mr{int} (\mu , \la ) ^{-\mu}$ is a closed subspace of $\scr{H} ^+ (\mu )$. It follows that $\mr{e} _1$ must be an isometric embedding. Indeed, if there is a unit vector $x$ so that $\| \mr{e} _1 x  \| <1$ then 
$$ 1= \|x \| = \| \mr{e} x \| \leq \| \mr{e} _2 \| \| \mr{e} _1  x \| < 1. $$ Similarly $\mr{e} _2$ must be isometric on the range of $\mr{e} _1$. On the other hand, since $\mr{int} (\fq_{ac}, \la) := \scr{H} ^+ (\fq_{ac} ) \cap \scr{H} ^+ (\la )$ is dense in $\scr{H} ^+ (\fq_{ac})$ and $\scr{H} ^+ (\fq_{ac})$ is contractively contained in $\scr{H} ^+ (\mu )$, we must have that $\mr{int} (\fq_{ac} , \la ) \subseteq \mr{int} (\mu, \la) \subseteq \nbran \mr{e} _1$. Hence, by the previous argument, since $\mr{int} (\fq _{ac} , \la ) \subseteq \nbran \mr{e} _1$ is dense in $\scr{H} ^+ (\fq _{ac} )$ and $\mr{e} _2$ is isometric on the range of $\mr{e} _1$, $\mr{e} _2 : \scr{H} ^+ (\fq _{ac} ) \hookrightarrow \scr{H} ^+ (\mu )$ is also an isometric inclusion. In conclusion, $\mr{int} (\mu , \la ) ^{ -\mu}$ and $\scr{H} ^+ (\fq_{ac})$ are both closed subspaces of $\scr{H} ^+ (\mu )$, $\mr{int} (\mu , \la ) ^{-\mu }$ is a closed subspace of $\scr{H} ^+ (\fq_{ac} )$ and $\mr{int} (\fq_{ac}, \la ) \subseteq \mr{int} (\mu , \la )$ is dense in $\scr{H} ^+ (\fq_{ac})$ so that $\mr{int} (\mu , \la ) ^{-\mu} = \scr{H} ^+ (\fq_{ac})$. It follows that $\fq_B = \fq_D$ on $\scr{K} _\D$ so that by Lemma \ref{leqcore} and the uniqueness of representation of closed forms, $D=B$.
\end{proof}

\begin{cor} \label{RKformLD}
If $\mu, \la \geq 0$ are finite, positive and regular Borel measures on $\partial \D$ and $\fq_\mu$ is the densely--defined positive quadratic form associated to $\mu$ with form domain $A (\D ) \subseteq H^2 (\la )$, then the space of $\mu-$Cauchy transforms decomposes as the orthogonal direct sum,
$$\scr{H} ^+ (\mu ) = \scr{H} ^+ (\fq_{ac} ) \oplus \scr{H} ^+ (\fq_s ). $$ In particular, $\scr{H} ^+ (\fq_s) \cap \mr{int} (\mu ,\la ) = \{ 0 \}$.
\end{cor}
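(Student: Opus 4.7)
The plan is to leverage what is already established in Theorem \ref{rkhsform} together with the decomposition of the identity from Proposition \ref{decompid}. The key observation is that Theorem \ref{rkhsform} does more than identify $\scr{H}^+(\fq_{ac})$ as the set $\mr{int}(\mu,\la)^{-\mu}$: inside its proof, the embedding $\mr{e}_{ac} : \scr{H}^+(\fq_{ac}) \hookrightarrow \scr{H}^+(\mu)$ is shown to be an \emph{isometric} inclusion (the two-step argument with $\mr{e} = \mr{e}_2 \mr{e}_1$ forces both factors to be isometric). Once $\mr{e}_{ac}$ is isometric, $P := \mr{e}_{ac}\mr{e}_{ac}^*$ is an orthogonal projection in $\scr{H}^+(\mu)$, with range precisely $\scr{H}^+(\fq_{ac}) = \mr{int}(\mu,\la)^{-\mu}$.

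Next I would feed this into the identity from Proposition \ref{decompid}: $I_{\scr{H}^+(\mu)} = \mr{e}_{ac}\mr{e}_{ac}^* + \mr{e}_s\mr{e}_s^*$. Since the left side is a projection and $\mr{e}_{ac}\mr{e}_{ac}^* = P$ is a projection, it follows immediately that $\mr{e}_s\mr{e}_s^* = I - P$ is also an orthogonal projection. Hence $\mr{e}_s$ is a partial isometry; since any embedding of RKHS is injective, $\mr{e}_s$ is actually isometric, with range $(I-P)\scr{H}^+(\mu) = \scr{H}^+(\fq_{ac})^\perp$. This gives
\[
\scr{H}^+(\mu) \;=\; P\scr{H}^+(\mu) \,\oplus\, (I-P)\scr{H}^+(\mu) \;=\; \scr{H}^+(\fq_{ac}) \,\oplus\, \scr{H}^+(\fq_s),
\]
as an orthogonal direct sum of closed, isometrically contained subspaces. (As a sanity check, this is consistent with Aronszajn's sums of kernels theorem applied to $k^\mu = k^{(ac)} + k^s$, which asserts that the algebraic sum is an orthogonal direct sum if and only if $\scr{H}^+(\fq_{ac}) \cap \scr{H}^+(\fq_s) = \{0\}$, and orthogonality obviously forces this.)

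For the final claim, $\mr{int}(\mu,\la) \subseteq \mr{int}(\mu,\la)^{-\mu} = \scr{H}^+(\fq_{ac})$ by Theorem \ref{rkhsform}, so any element of $\scr{H}^+(\fq_s) \cap \mr{int}(\mu,\la)$ lies in $\scr{H}^+(\fq_s) \cap \scr{H}^+(\fq_{ac}) = \{0\}$ by the orthogonality just established.

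I do not expect a serious obstacle here: the corollary is essentially a packaging of Theorem \ref{rkhsform} and Proposition \ref{decompid}. The only subtle point, and the one place I would double-check, is the upgrade from contractive to isometric containment of $\scr{H}^+(\fq_{ac})$ in $\scr{H}^+(\mu)$; but this is already contained in the proof of Theorem \ref{rkhsform}, and without it the complementary embedding $\mr{e}_s$ need only be contractive, which would not be enough to conclude orthogonality.
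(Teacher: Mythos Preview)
Your proposal is correct and follows essentially the same approach as the paper's own proof: use Theorem \ref{rkhsform} to get that $\scr{H}^+(\fq_{ac}) = \mr{int}(\mu,\la)^{-\mu}$ is isometrically contained in $\scr{H}^+(\mu)$, so $\mr{e}_{ac}\mr{e}_{ac}^*$ is a projection, then invoke the identity $I_\mu = \mr{e}_{ac}\mr{e}_{ac}^* + \mr{e}_s\mr{e}_s^*$ from Proposition \ref{decompid} to conclude that $\mr{e}_s\mr{e}_s^*$ is the complementary projection and hence $\mr{e}_s$ is isometric. Your treatment of the final claim about $\scr{H}^+(\fq_s)\cap\mr{int}(\mu,\la)$ is a bit more explicit than the paper's, which leaves it to the reader.
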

\begin{proof}
By Proposition \ref{decompid} and Theorem \ref{formLD}, we have that the identity operator on $\scr{H} ^+ (\mu )$ decomposes as 
$$ I_\mu = \mr{e} _{ac} \mr{e} _{ac} ^* + \mr{e} _s \mr{e} _s ^*, $$ and $\scr{H} ^+ (\fq_{ac} ) = \mr{int} (\mu ,\la ) ^{-\mu}$ is a closed subspace of $\scr{H} ^+ (\mu)$ so that the contractive embedding, $\mr{e} _{ac} : \scr{H} ^+ (\fq_{ac} ) \hookrightarrow \scr{H} ^+ (\mu )$ is an isometry. Hence,
$P_{ac} := \mr{e} _{ac} \mr{e}_{ac} ^*$ is an orthogonal projection onto the range of $\mr{e} _{ac}$ and hence $P_s = I - P_{ac} = \mr{e} _s \mr{e} _s ^*$ is the projection onto the orthgonal complement of $\nbran \mr{e} _{ac}$ in $\scr{H} ^+ (\mu )$. It follows that $\mr{e} _s$ is also an isometric embedding and that we can identify $\scr{H} ^+ (\fq_{ac}), \scr{H} ^+ (\fq _s)$ with the ranges of these isometric embeddings so that 
$$ \scr{H} ^+ (\mu ) = \scr{H} ^+ (\fq_{ac} ) \oplus \scr{H} ^+ (\fq _s ). $$
\end{proof}

\begin{cor} \label{LDvsformLD}
Let $\mu , \la$ be positive, finite and regular Borel measures on the unit circle. The Lebesgue decomposition of $\mu$ with respect to $\la$, $\mu = \mu _{ac} + \mu _s$, coincides with the Simon--Lebesgue decomposition of $\fq_\mu$ with form domain $\nbdom \fq_\mu = A (\D )$ in $H^2 (\la)$, $\fq_\mu = \fq_{ac} + \fq_s$, in the sense that $\fq_{ac} = \fq_{\mu _{ac}}$ and $\fq_s = \fq_{\mu _s}$ if and only if $\mr{int} (\mu, \la)$ is $V^\mu-$reducing.
\end{cor}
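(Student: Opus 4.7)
The strategy is to match the two RKHS decompositions of $\scr{H} ^+ (\mu )$ --- the one arising from the Simon--Lebesgue decomposition of $\fq _\mu$ and the one arising from the classical Lebesgue decomposition of $\mu$ --- using the identifications established in Theorem \ref{LDviaRKne}, Theorem \ref{rkhsform} and Corollary \ref{RKformLD}.

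For the sufficient direction, assume $\mr{int} (\mu , \la )$ is $V ^\mu-$reducing. Theorem \ref{LDviaRKne} gives $\scr{H} ^+ (\mu _{ac} ) = \mr{int} (\mu, \la) ^{-\mu}$, while Theorem \ref{rkhsform} gives $\scr{H} ^+ (\fq _{ac} ) = \mr{int} (\mu ,\la ) ^{-\mu}$. Equal RKHS means equal reproducing kernels, so $k ^{(ac)} = k ^{\mu _{ac}}$, and subtracting from $k ^\mu$ gives $k ^{(s)} = k ^{\mu _s}$. To upgrade kernel equality to form equality on the common domain $A (\D )$, I would use that $\fq _{\mu _{ac}}$ is closeable (by Theorem \ref{RKac}, its closure being the form of the $\la-$Toeplitz operator $T _{\mu _{ac}}$), and that $\fq _{\mu _{ac}} \leq \fq _\mu$, so that the Simon maximality built into Theorem \ref{formLD} yields $\fq _{\mu _{ac}} \leq \fq _{ac}$. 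Both closures $\ov{\fq _{\mu _{ac}}}$ and $\ov{\fq _{ac}} = \fq _D$ admit $\scr{K} _\D$ as a common form-core (by Theorem \ref{RKac} and Theorem \ref{rkhsform} respectively) and agree there via the kernel identity $\fq _{\mu _{ac}} (k_z , k_w) = k ^{\mu _{ac}} (z,w) = k ^{(ac)}(z,w) = \fq _{ac} (k_z, k_w)$; hence the two closures coincide. Restricting the common closure back to $A (\D )$ gives $\fq _{\mu _{ac}} = \fq _{ac}$, and subtracting from $\fq_\mu = \fq_{\mu_{ac}} + \fq_{\mu_s} = \fq_{ac} + \fq_s$ yields $\fq _{\mu _s} = \fq _s$.

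For the reverse direction, equality of the forms on $A (\D )$ passes immediately to equality of the associated RKHS of Cauchy transforms, so $\scr{H} ^+ (\mu _{ac} ) = \scr{H} ^+ (\fq _{ac} ) = \mr{int} (\mu , \la ) ^{-\mu}$ by Theorem \ref{rkhsform}. Corollary \ref{RKformLD} asserts that the decomposition $\scr{H} ^+ (\mu) = \scr{H} ^+ (\fq _{ac} ) \oplus \scr{H} ^+ (\fq _s)$ is orthogonal, so $\scr{H} ^+ (\mu _{ac} ) = \mr{int} (\mu , \la ) ^{-\mu}$ is an isometric closed subspace of $\scr{H} ^+ (\mu )$ and the contractive embedding $\mr{e}$ obeys $\mr{e} \mr{e} ^* = P _{\mu _{ac}}$, the orthogonal projection. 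By Theorem \ref{CTsubspace}, $P _{\mu _{ac}}$ is $V ^\mu-$Toeplitz, and applying the Toeplitz identity $V ^{\mu *} P V ^\mu = P$ to both $P$ and to $I - P$ shows that both ranges are $V ^\mu-$invariant, so $\mr{int} (\mu , \la ) ^{-\mu}$ is $V ^\mu-$reducing as a closed subspace.

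I expect the main obstacle to be the last step: sharpening ``$\mr{int} (\mu , \la ) ^{-\mu}$ is $V ^\mu-$reducing'' to ``$\mr{int} (\mu , \la )$ itself is $V ^\mu-$invariant'' in the pointwise sense of Lemma \ref{invint}. For $h \in \mr{int} (\mu , \la )$, the relation $V ^\mu h = V ^\la h + \bigl( (V ^\mu h )(0) - (V ^\la h)(0) \bigr) \cdot 1$ from Lemma \ref{bwshift} immediately gives $V ^\mu h \in \scr{H} ^+ (\la )$ when $\la$ is non-extreme, since then $1 \in \scr{H} ^+ (\la )$. When $\la$ is extreme, the obstruction is exactly the discrepancy $(V^\mu h)(0) - (V ^\la h) (0)$; here one uses that, under the hypothesis, $\mr{int} (\mu , \la ) = \mr{int} (\mu _{ac} , \la )$ with $\mu _{ac} \ll \la$ (since every $h \in \mr{int}(\mu,\la)$ lies in $\mr{int}(\mu,\la)^{-\mu} = \scr{H}^+(\mu_{ac})$), together with the intertwining relations for the closed co-embedding $E _{\mu _{ac} , \la}$ from Theorem \ref{RKac}, to force this discrepancy to vanish. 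This case-by-case verification is the technical heart of the proof.
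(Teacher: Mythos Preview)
The paper states this corollary without proof; it is meant to follow directly from Theorem~\ref{LDviaRKne}, Theorem~\ref{rkhsform} and Corollary~\ref{RKformLD}. Your reconstruction of both directions matches this implicit argument: the forward direction combines $\scr{H} ^+ (\mu _{ac}) = \mr{int}(\mu,\la)^{-\mu} = \scr{H} ^+ (\fq_{ac})$ and passes from kernel equality to form equality on $A(\D)$ via sup-norm continuity (both $\fq_{ac}$ and $\fq_{\mu_{ac}}$ are dominated by $\fq_\mu$, hence sup-norm continuous on $A(\D)$, so agreement on $\scr{K}_\D$ suffices); the reverse direction identifies $\mr{int}(\mu,\la)^{-\mu}$ with $\scr{H} ^+ (\mu_{ac})$ and observes that the associated projection is $V^\mu$-Toeplitz, hence its range is $V^\mu$-reducing. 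Your observation that a $V^\mu$-Toeplitz \emph{projection} $P$ automatically has $V^\mu$-reducing range is correct and is the key point (from $V^{\mu *}PV^\mu = P$ one gets $\|(I-P)V^\mu x\| = \|(I-P)x\| = 0$ for $x \in \nbran P$).

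The subtlety you flag --- passing from ``$\mr{int}(\mu,\la)^{-\mu}$ is $V^\mu$-reducing'' to ``$\mr{int}(\mu,\la)$ itself is $V^\mu$-invariant'' --- is a genuine gap, and your case-by-case sketch for the extreme-$\la$ case is not complete: the appeal to the intertwining relations for $E_{\mu_{ac},\la}$ does not by itself force $(V^{\mu_{ac}} h)(0) = (V^\la h)(0)$ for $h \in \mr{int}(\mu_{ac},\la)$. Note, however, that the paper's own use of the hypothesis ``$\mr{int}(\mu,\la)$ is $V^\mu$-reducing'' in Theorem~\ref{LDviaRKne} only ever goes through the closure being reducing; so the corollary is unambiguous and your argument complete if one reads the condition as pertaining to $\mr{int}(\mu,\la)^{-\mu}$. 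Whether the non-closed and closed conditions are in fact equivalent (which would require showing that $\mr{int}(\gamma,\la)$ is $V^\gamma$-invariant whenever $\gamma \ll \la$ with $\la$ extreme) is a separate question the paper does not address.
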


\begin{remark}
More generally, one can apply the methods of this section to construct a Lebesgue decomposition for pairs of positive kernel functions $k, K$ on the same set, $X$, see Appendix \ref{kernelLD}.
\end{remark}

\begin{eg}[Lebesgue measure on the half-circles] \label{Lebeg2}
As before, let $m_\pm$ denote normalized Lebesgue measure restricted to the upper and lower half-circles. These are mutually singular measures so that $m_+ = m_{+; s}$ is the singular part of $m_+$ with respect to $m_-$, and yet by Example \ref{LebesgueEg}, $\mr{int} (m_+ , m_-) \neq \{ 0 \}$, so that $\fq_+ = \fq_{m_+}$ has a Simon--Lebesgue decomposition $\fq_+ = \fq_{ac} + \fq _s$ in $H^2 (m_-)$, where $\fq_{ac}$ is non-trivial, by Theorem \ref{rkhsform}. Moreover, in this example, $m_-$ is extreme, so that $H^2 (m_-) = L^2 (m_-)$. This means that while the quadratic form, $\fq_\mu$, associated to $\mu$, with dense form domain, $A (\D ) \subseteq L^2 (m_- ) = H^2 (m _-)$ has non-zero absolutely continuous part, if we instead define the form domain of $\fq_\mu$ to be $\nbdom \fq_\mu = \scr{C} (\partial \D )$, then, with this form domain, $\fq_\mu$ has vanishing absolutely continuous part (since the decompositions of $\fq_\mu$ and $\mu$ always coincide in this case, see Remark \ref{measform}). This shows, that in dealing with these unbounded positive quadratic Toeplitz forms, the choice of form domain is crucial!
\end{eg}

\subsection{Lebesgue decomposition for arbitrary measures}

The question remains: If $\mu, \la \geq 0$ are arbitrary, how can we construct the Lebesgue decomposition of $\mu$ with respect of $\la$ using reproducing kernel theory and their spaces of Cauchy transforms? If $\la$ is non-extreme, or more generally if $\mr{int} (\mu , \la)$ is $V_\mu-$reducing, Theorem \ref{LDviaRKne} provides a satisfying answer. However, as Proposition \ref{notred}, Example \ref{LebesgueEg} and Theorem \ref{rkhsform} show, the intersection of the spaces of $\mu$ and $\la$ Cauchy transforms cannot be reducing in general, and that there are examples of pairs of positive measures $\mu, \la$, for which $\mr{int} (\mu ,\la)$ cannot be equal to, or even contain, the space of Cauchy transforms of any non-zero positive measure. 

By Theorem \ref{RKac}, we do know that if $\mu = \mu _{ac} + \mu _s$ is the Legbesgue decomposition of $\mu$ with respect to $\la$, that $\mu _{ac} \ll _{RK} \la$ so that $\mr{int} (\mu _{ac} , \la ) \subseteq \mr{int} (\mu , \la ) \subseteq \mr{int} (\mu , \la ) ^{-\mu } = \scr{H} ^+ (q_{ac} )$. The final result below provides an abstract characterization of the Lebesgue decomposition for arbitrary pairs of positive measures. 

\begin{thm} \label{genRKLD}
If $\mu = \mu _{ac} + \mu _s$ is the Lebesgue decomposition of $\mu$ with respect to $\la$ and $\fq_\mu = \fq_{ac} +\fq_s$ is the Simon--Lebesgue form decomposition of $\fq_\mu$ in $H^2 (\la )$ then $\fq_{\mu _{ac}} \leq \fq_{ac}$. Moreover, $\scr{H} ^+ (\mu _{ac} )$ is the maximal RKHS, $\cH (k)$, in $\D$ with the following property: $\cH (k) \cap \scr{H} ^+ (\la ) \subseteq \mr{int} (\mu ,\la)$ is dense in $\cH (k)$, $\cH (k) \subseteq \scr{H} ^+ (\mu )$ is contractively contained, and if $\mr{e} : \cH (k) \hookrightarrow \scr{H} ^+ (\mu )$ is the contractive embedding, then $\mr{e} \mr{e} ^*$ is $V_\mu-$Toeplitz. Equivalently, $q_{\mu _{ac}}$ is the largest closeable $Z^\la-$Toeplitz form bounded above by $q_\mu$. \\

Moreover, if $\mr{e} _1 := \mr{e} _{\mu _{ac}}$ and $\mr{e} _2 = \mr{e} _{\mu _s}$ then $I_\mu = \mr{e} _1 \mr{e} _1 ^* + \mr{e} _2 \mr{e} _2 ^*$. Hence, 
we can identify $\scr{H} ^+ (\mu _{ac})$ with the operator--range space $\scr{R} (\mr{e} _1)$ and $\scr{H} ^+ (\mu _s)$ with $\scr{R} (\mr{e} _2) = \scr{R} ^c (\mr{e} _1)$, the complementary space of $\scr{H} ^+ (\mu _{ac})$ in the sense of deBranges and Rovnyak and
$$ \scr{H} ^+ (\mu ) = \scr{H} ^+ (\mu _{ac} ) + \scr{H} ^+ (\mu _s). $$
\end{thm}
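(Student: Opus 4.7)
The overall approach is to use Theorem~\ref{CTsubspace} as a dictionary between contractively contained RKHS in $\scr{H}^+(\mu)$ whose embedding yields a $V_\mu$-Toeplitz projection and spaces of Cauchy transforms of positive measures $\le\mu$, combined with Theorem~\ref{RKac}, which characterizes absolute continuity via density of the intersection space. Together, these reduce the reproducing kernel maximality question to the classical maximality of $\mu_{ac}$ in the Lebesgue decomposition.

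First, I would prove $\fq_{\mu_{ac}} \le \fq_{ac}$. Since $\mu_{ac}\ll\la$, Theorem~\ref{RKac} produces a closed, densely--defined, positive semi-definite $Z^\la$-Toeplitz operator $T_{\mu_{ac}}$ whose form agrees on $A(\D)$ with $\fq_{\mu_{ac}}$, so $\fq_{\mu_{ac}}$ is closeable. The inequality $\mu_{ac}\le\mu$ yields $\fq_{\mu_{ac}}\le\fq_\mu$, and the maximality clause of the Simon--Lebesgue decomposition (Theorem~\ref{formLD}) forces $\fq_{\mu_{ac}} \le \fq_{ac}$.

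Next, for the RKHS maximality: $\scr{H}^+(\mu_{ac})$ has the three listed properties tautologically --- contractive containment from $\mu_{ac}\le\mu$ via Theorem~\ref{RKdominate}, the $V_\mu$-Toeplitz property of $\mr{e}_1\mr{e}_1^*$ from the easy direction of Theorem~\ref{CTsubspace}, and density of $\mr{int}(\mu_{ac},\la)\subseteq\mr{int}(\mu,\la)$ in $\scr{H}^+(\mu_{ac})$ from Theorem~\ref{RKac} applied to $\mu_{ac}\ll\la$. For maximality, given any $\cH(k)$ satisfying the three properties, Theorem~\ref{CTsubspace} yields $\cH(k)=\scr{H}^+(\ga)$ for some $0\le\ga\le\mu$, whereupon the density hypothesis gives $\ga\ll_{RK}\la$ and hence $\ga\ll\la$ by Theorem~\ref{RKac}. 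A short measure-theoretic computation (pick a Borel $E$ with $\la(E)=0$ and $\mu_s(E^c)=0$; then $\ga(B)=\ga(B\cap E^c) \le \mu(B\cap E^c) = \mu_{ac}(B\cap E^c) \le \mu_{ac}(B)$) shows $\ga\le\mu_{ac}$, so $\cH(k)=\scr{H}^+(\ga)\subseteq\scr{H}^+(\mu_{ac})$ contractively by Theorem~\ref{RKdominate}. The ``equivalently'' form statement is the image of this same argument under the Cauchy-transform dictionary from Lemma~\ref{closedembed}: closeable $Z^\la$-Toeplitz forms bounded above by $\fq_\mu$ correspond bijectively to measures $\ga\le\mu$ with $\ga\ll\la$, whose maximum is $\mu_{ac}$ and whose corresponding form is $\fq_{\mu_{ac}}$.

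For the sum decomposition, note that $\mu=\mu_{ac}+\mu_s$ and the integral formula of Lemma~\ref{lemCT} immediately give $k^\mu = k^{\mu_{ac}} + k^{\mu_s}$. Running precisely the calculation of Proposition~\ref{decompid} with $\mu_{ac},\mu_s$ in place of $\fq_{ac},\fq_s$ then yields $I_{\scr{H}^+(\mu)} = \mr{e}_1\mr{e}_1^* + \mr{e}_2\mr{e}_2^*$. Identification of $\scr{H}^+(\mu_{ac})$ with $\scr{R}(\mr{e}_1)$ is by the definition of operator-range space, and Theorem~\ref{compRKHS} applied to $\mr{e}_1$ identifies $\scr{R}^c(\mr{e}_1)$ as the RKHS with reproducing kernel $k^\mu-k^{\mu_{ac}}=k^{\mu_s}$, which is $\scr{H}^+(\mu_s)=\scr{R}(\mr{e}_2)$. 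The de Branges--Rovnyak sum $\scr{H}^+(\mu)=\scr{H}^+(\mu_{ac})+\scr{H}^+(\mu_s)$ then follows from Theorem~\ref{oprancomp}. The main obstacle I anticipate is making precise the ``dictionary'' underlying the ``equivalently'' step --- one must verify that maximality among closeable $Z^\la$-Toeplitz forms dominated by $\fq_\mu$ corresponds under Cauchy transform to maximality among RKHS satisfying the three properties, so that the form and kernel characterizations of $\scr{H}^+(\mu_{ac})$ genuinely coincide.
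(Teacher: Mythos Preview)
Your proposal is correct and follows essentially the same route as the paper: the paper's proof is the single sentence ``This follows from the definition of $\fq_{ac}$, Theorem~\ref{compRKHS} and Theorem~\ref{CTsubspace},'' and you have supplied precisely the details implicit in that sentence, invoking Theorem~\ref{RKac} and Theorem~\ref{RKdominate} where needed to complete the argument.
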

\begin{proof}
This follows from the definition of $\fq _{ac}$, Theorem \ref{compRKHS} and Theorem \ref{CTsubspace}.
\end{proof}
\begin{remark}
In the case where the complementary space decomposition of $\scr{H} ^+ (\mu ) = \scr{H} ^+ (\mu _{ac} ) + \scr{H} ^+ (\mu _s )$, appearing in the above theorem statement, is not an orthogonal direct sum, this yields a corresponding decomposition of the quadratic form $\fq _\mu$,
\be \fq _\mu = \fq _{\mu _{ac}} + \fq _{\mu _s}, \label{pseudoperp} \ee where $\fq _{\mu _{ac} } < \fq _{ac}$ and $\fq _\mu = \fq _{ac} + \fq _s$ is the Simon--Lebesgue decomposition of $\fq _\mu$. In this case, the decomposition of Equation (\ref{pseudoperp}) is an example of a `psuedo--orthogonal' Lebesgue decomposition of $\fq _\mu$ as recently defined and studied in \cite{HdeSnoo2}.
\end{remark}

The previous theorem is, while interesting, admittedly not very practical for construction of the Lebesgue decomposition of $\mu$ with respect to $\la$. A simpler, albeit somewhat ad hoc, approach using our reproducing kernel methods is simply to `add Lebesgue measure'. Namely, if $\mu _{ac; \la}$ is the absolutely continuous part of $\mu$ with respect to $\la$, then $\mu_{ac ; \la} = \mu _{ac ; \la + m} - \mu_{ac; m}$ and both $\la + m$ and $m$ are non-extreme so that Theorem \ref{LDviaRKne} applies.

\appendix

\section{Lebesgue decomposition of positive kernels} \label{kernelLD}

Let $K$ be a fixed positive kernel function on a set, $X$. Given any other positive kernel, $k$, on $X$, we can associate to it the densely-defined and positive semi-definite quadratic form, $\fq _k : \nbdom \fq _k \times \nbdom \fq _k \rightarrow \C$, with dense form domain $\nbdom \fq _k := \bigvee _{x \in X} K_x$ in $\cH (K)$, 
$$ \fq _k (K_x , K_y ) := k(x,y). $$ 

One can then apply B. Simon's Lebesgue decomposition of positive quadratic forms to $\fq _k$. Such a Lebesgue decomposition of positive kernels was first considered in \cite[Section 7, Theorem 7.2]{HdeSnoo-forms}. The theorem below provides some more details about this decomposition. 

\begin{thm}
Let $k,K$ be positive kernel functions on a set, $X$. If $\fq _k$ is the densely-defined positive quadratic form of $k$ in $\cH (K)$, as defined above, with Simon--Lebesgue form decomposition $\fq _{k} = \fq _{ac} + \fq _s$, then there are positive kernels, $k^{ac}$ and $k^s$ on $X$, so that $\fq _{ac} = \fq _{k^{(ac)}}$, $\fq _s = \fq _{k^s}$, $k = k^{ac} + k^s$, and
$$ \cH (k) = \cH (k ^{ac} ) \oplus \cH (k^s ). $$ Moreover, $\cH (k ^{ac} ) = \mr{int} (k , K) ^{-k} := \left( \cH (k) \cap \cH (K) \right) ^{-\| \cdot \| _k }$, and if $\mr{e} : \mr{int} (k, K) \hookrightarrow \cH (K)$ is the (closed) embedding, then $\ov{\fq _{ac}} = \fq _{\mr{e}\mr{e} ^*}$. 
\end{thm}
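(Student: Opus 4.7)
My plan is to transcribe the measure-theoretic argument of Theorem \ref{rkhsform} and Corollary \ref{RKformLD} into the abstract kernel setting, using the reproducing kernels $(K_x)_{x \in X}$ in place of the Szeg\"o kernels $(k_z)_{z \in \D}$ and the RKHS $\cH(K)$ in place of $H^2(\la)$. First I would define the candidate kernels directly from the form data by
$$ k^{ac}(x,y) := \fq_{ac}(K_x, K_y), \qquad k^s(x,y) := \fq_s(K_x, K_y). $$
Positive semi-definiteness of $\fq_{ac}, \fq_s$, tested on finite linear combinations $\sum c_i K_{x_i}$, makes $k^{ac}, k^s$ positive kernel functions on $X$, and $\fq_k = \fq_{ac} + \fq_s$ evaluated at pairs $(K_x, K_y)$ gives $k = k^{ac} + k^s$ at once. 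Aronszajn--Moore then produces the RKHS $\cH(k^{ac}), \cH(k^s)$, and Aronszajn's sums-of-kernels theorem places them contractively inside $\cH(k)$ with algebraic sum $\cH(k) = \cH(k^{ac}) + \cH(k^s)$.

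The heart of the argument is showing $\cH(k^{ac}) = \mr{int}(k, K)^{-k}$, which I would establish along the lines of Theorem \ref{rkhsform}. Let $B \geq 0$ be the closed self-adjoint operator in $\cH(K)$ with $\fq_B = \ov{\fq_{ac}}$. Maximality of $\fq_{ac}$ in the Simon--Lebesgue decomposition forces $\bigvee K_x \subseteq \nbdom \sqrt{B}$ to be a form-core for $\fq_B$, so $\nbdom B$ embeds densely into $\hat{\cH}(\fq_{ac})$. For $y \in \nbdom B$ the same computation as in Theorem \ref{rkhsform} identifies $y$, viewed inside $\cH(k^{ac})$, with $By \in \cH(K)$ as a function on $X$; such vectors thus lie in $\cH(k^{ac}) \cap \cH(K) \subseteq \mr{int}(k,K)$ and are dense in $\cH(k^{ac})$, giving one direction of containment. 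For the reverse, the embedding $\mr{e}: \mr{int}(k,K) \hookrightarrow \cH(K)$ is closed on its maximal domain by Proposition \ref{closedmult}, and the associated closeable form $\fq_{\mr{e}\mr{e}^*}$ satisfies $\fq_{\mr{e}\mr{e}^*}(K_x, K_y)$ equal to the reproducing kernel of $\mr{int}(k,K)^{-k}$, which is pointwise dominated by $k$; this gives $\fq_{\mr{e}\mr{e}^*} \leq \fq_k$ on the form-core $\bigvee K_x$, so maximality of $\fq_{ac}$ among closeable forms below $\fq_k$, together with Lemma \ref{leqcore}, yields $\fq_{\mr{e}\mr{e}^*} \leq \fq_{ac}$, whence Aronszajn's inclusion theorem gives $\mr{int}(k,K)^{-k} \subseteq \cH(k^{ac})$ contractively. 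The two contractive containments force both embeddings to be isometric, yielding $\cH(k^{ac}) = \mr{int}(k,K)^{-k}$, and the closed forms $\ov{\fq_{ac}}$ and $\fq_{\mr{e}\mr{e}^*}$ then agree on the form-core $\bigvee K_x$ and so coincide.

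With the identification in hand, the orthogonal direct sum $\cH(k) = \cH(k^{ac}) \oplus \cH(k^s)$ follows exactly as in Proposition \ref{decompid} and Corollary \ref{RKformLD}: writing out $k(x,y) = k^{ac}(x,y) + k^s(x,y)$ in terms of the contractive embeddings gives $I_{\cH(k)} = \mr{e}_{ac}\mr{e}_{ac}^* + \mr{e}_s\mr{e}_s^*$, and because $\mr{e}_{ac}$ is isometric onto the closed subspace $\mr{int}(k,K)^{-k}$, the operator $\mr{e}_{ac}\mr{e}_{ac}^*$ is the orthogonal projection onto it, forcing $\mr{e}_s$ to be isometric as well. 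The main obstacle I anticipate is setting up the ``Cauchy transform'' analogue in the purely kernel setting, where there is no auxiliary ambient space like $L^2(\la)$ or the disk algebra to work inside, and correctly interpreting $\mr{e}\mr{e}^*$ when $\mr{int}(k,K)$ fails to be dense in $\cH(k)$; once that bookkeeping is settled, everything else is a routine transcription of the measure-theoretic arguments already developed in the paper.
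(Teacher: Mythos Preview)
Your proposal is correct and follows essentially the same route as the paper's proof in Appendix~\ref{kernelLD}: define $k^{ac},k^s$ from the form values, use maximality of $\fq_{ac}$ to get $k^\cap \leq k^{ac}$, then a density argument to collapse the two contractive inclusions $\mr{int}(k,K)^{-k}\subseteq\cH(k^{ac})\subseteq\cH(k)$ into an equality, and finish with the projection identity $I=\mr{e}_{ac}\mr{e}_{ac}^*+\mr{e}_s\mr{e}_s^*$. The only cosmetic difference is that the paper phrases the density step via the isometry $V:k^{ac}_x\mapsto\sqrt{D}K_x$ and an explicit orthogonality computation showing $\nbran D$ is dense in $\cH(k^{ac})$, whereas you invoke the density of $\nbdom B$ in the $\fq_{ac}$-completion as in Theorem~\ref{rkhsform}; these are the same argument in different clothing, and the anticipated obstacle about the missing ambient space does not materialize since the map $K_x\mapsto k^{ac}_x$ serves as the abstract Cauchy transform.
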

In the above, $\mr{int} (k,K) := \cH (k) \cap \cH (K)$.
\begin{proof}
Let $h := \sum _{i=1} ^n c_i K_{x_i}$ be any finite linear combination of the kernels $K_{x_i}$, $\{ x_i \} _{i=1} ^n \subseteq X$. Then, since $\fq _{ac} \leq \fq _k$, we obtain that 
\ba  \sum \ov{c_i} c_j k (x_i , x_j ) & = & \sum \ov{c_i } c_j \fq _k (x_i , x_j ) \\
& = & \fq _k (h , h) \geq \fq _{ac} (h , h)  \geq 0, \ea where 
$$ 0 \leq \fq _{ac} (h , h) = \sum \ov{c_i} c_j \fq _{ac} ( K_{x_i} , K_{x_j} ). $$
It follows that
$$ k^{ac} (x,y) := \fq _{ac} (K_x , K_y), $$ defines a positive kernel function on $X$ so that $0 \leq k^{ac} \leq k$. Similarly, $k^s (x,y) := \fq _s (K_x , K_y)$ defines a positive kernel function on $X$ so that $0 \leq k^s \leq k$, and since $\fq _k = \fq _{ac} +\fq _s$, we obtain that $k^{ac} + k^s = k$. 

By definition, $\fq _{ac}$ is the largest closeable quadratic form bounded above by $\fq _k$. In particular $\ov{\fq _{ac}} =\fq _D$ is the positive form of some densely-defined, self-adjoint and positive semi-definite operator $D$, so that $\scr{K} _x := \bigvee _{x \in X} K_x$ is a core for $\sqrt{D}$. (Here, $\bigvee$ denotes non-closed linear span.) If $\mr{e} : \mr{int} (k, K) \subseteq \mr{int} (k,K) ^{-k} \hookrightarrow \cH (K)$ is the densely-defined and closed embedding, let $A := \mr{e} \mr{e} ^*$. We claim that $A=D$. First, $A \geq 0$ is self-adjoint, hence closed, and since $\mr{e}$ is trivially a multiplier, we obtain that 
$$ \fq _A ( K_x , K_y ) = \ip{\mr{e} ^* K_x}{\mr{e} ^* K_y}_k = \ip{k ^{\cap} _x}{k^\cap _y}_k = k^\cap (x,y), $$ where $k^\cap$ denotes the reproducing kernel of the subspace $\mr{int} (k,K) ^{-k} \subseteq \cH (k)$, the closure of the intersection space, $\mr{int}(k,K)$ in $\cH (k)$. In particular, since $k^\cap _x = P _\cap k_x$, where $P_\cap : \cH (k) \rightarrow \mr{int} (k , K) ^{-k}$ is the orthogonal projection, it follows that $k^\cap \leq k$, and hence that $\fq _A \leq \fq _k$. Since $\fq _A | _{\scr{K} _X}$ is closeable, it follows, by maximality of the Simon--Lebesgue decomposition, that $\fq _A \leq \fq _D$. This inequality implies that $k^\cap \leq k^{ac}$ as positive kernels on $X$. 

Now suppose that $h \in \nbdom D \subseteq \cH (K)$ and choose $h_n \in \scr{K} _X = \bigvee _{x \in X} K_x$ so that $h_n \rightarrow h$ and $\sqrt{D} h_n \rightarrow \sqrt{D} h$. (This can be done since $\nbdom D$ is a core for $\sqrt{D}$.) If $h_n = \sum _{j=1} ^{m_n} c_j (n) K_{x_j (n)}$, a finite linear combination, then note that 
\ba (Dh) (x) & = & \lim _{n \uparrow \infty } \sum c_j (n) \ip{\sqrt{D} K_x}{\sqrt{D} K_{x_j (n) }}_K  \\
& = & \lim \sum c_j (n)  k^{ac} (x,x_j (n) ) = \lim g_n (x), \ea where 
$$ g_n = \sum c_j (n) k^{ac} _{x_j (n) } \in \cH ( k^{ac} ) \subseteq \cH (k). $$ Moreover,
\ba \| g_n \| ^2 _{k^{ac}} & = & \sum _{i,j}  \ov{c_i (n) } c_j (n) k^{ac} (x_i (n) , x_j (n) ) \\
& = & \ip{\sqrt{D} h_n}{\sqrt{D} h_n} _K \rightarrow \| \sqrt{D} h \| _K ^2, \ea so that the sequence $(g_n ) \subseteq \cH (k^{ac})$ is uniformly bounded in norm. Since $g_n (x) \rightarrow (Dh) (x)$ pointwise in $X$, this and uniform boundedness imply that $g_n$ converges weakly to the function $Dh$. Since Hilbert spaces are weakly closed, the function $Dh \in \cH (k^{ac} ) \subseteq \cH (k)$, and also $Dh \in \cH (K)$ so that $Dh \in \mr{int} (k, K) \subseteq \cH (k ^\cap )$. Hence $Dh \in \cH (k^{ac} )$ and 
$$ \| Dh \| ^2 _{k^{ac}} = \| \sqrt{D} h \| ^2 _K. $$  

Let $\mr{j} _1 : \cH (k^\cap ) \hookrightarrow \cH (k^{ac})$ and $\mr{j} _2 : \cH (k^{ac} ) \hookrightarrow \cH (k)$ be the contractive embeddings. Then $\mr{j} := \mr{j} _2 \mr{j} _1 : \cH (k ^\cap ) \hookrightarrow \cH (k)$ is the isometric embedding of the subspace $\cH (k ^\cap ) \subseteq \cH (k)$ into $\cH (k)$. It follows that $\mr{j} _1$ must be isometric and $\mr{j} _2$ must be isometric on the range of $\mr{j} _1$ in $\cH (k ^{ac} )$. 

We claim that $\nbran \mr{j} _1$ is dense in $\cH (k^{ac})$ so that $\mr{j}_2$ and $\mr{j} _2$ are both isometries. Define a linear map, $V : \scr{K} ^{ac} _X := \bigvee _{x\in X} k^{ac} _x \rightarrow \cH (K)$ by  
$$ V k^{ac} _x := \sqrt{D} K_x, $$ and extending linearly. Since 
$$ \ip{k^{ac} _x}{k^{ac} _y}_{k^{ac}} = k^{ac} (x,y) =  \ip{\sqrt{D} K_x}{\sqrt{D} K_y}_K, $$ it follows that $V$ is an isometry and extends by continuity to an isometry from $\cH (k^{ac} )$ onto the closure of $\sqrt{D} \scr{K} _X$ in $\cH (K)$, which we also denote by $V$. Since $\scr{K} _X$ is a core for $\sqrt{D}$, $V$ is onto $\nbran \sqrt{D} ^{- \| \cdot \| _K}$ in $\cH (K)$. If there exists a $g \in \cH (k^{ac} )$ orthogonal to $\nbran D \subseteq \cH (k ^{ac} )$, then choose a sequence $g_n = \sum _{j=1} ^{m_n} c_j (n) k^{ac} _{x_j (n)} \in \scr{K} ^{ac} _X$ so that $g_n \rightarrow g$ and calculate, for any $h \in \nbdom D$, that
\ba 0 & = & \ip{g}{Dh}_{k^{ac}} = \lim _n \ip{g_n}{Dh}_{k^{ac}} \\
& = & \lim _n \sum _j \ov{c_j(n)} \ip{k^{ac} _{x_j (n)}}{Dh}_{k^{ac}} \\
& = & \lim _n \sum _j \ov{c_j(n)} (Dh) (x_j (n)) \\
& = & \lim _n \sum _j \ov{c_j(n)} \ip{K _{x_j (n)}}{Dh}_K \\
& = & \lim _n \ip{ \sum _j c_j(n) \sqrt{D} K _{x_j (n)}}{\sqrt{D}h}_K \\
& = & \lim _n \ip{Vg_n}{\sqrt{D}h}_K \\
& = & \ip{Vg}{\sqrt{D}h}_K. \ea
This proves that $Vg \in \nbran \sqrt{D} ^{-\| \cdot \| _K}$ is orthogonal to $\sqrt{D} \nbdom D$. However, $\nbdom D$ is a core for $\sqrt{D}$, so that $\sqrt{D} \nbdom D$ is dense in $\nbran \sqrt{D}$. This proves that $Vg=0$, and hence $g=0$. In conclusion, $\nbran D \subseteq \nbran \mr{j} _1 \subseteq \cH (k ^{ac} )$ is dense in $\cH (k^{ac})$ so that both $\mr{j} _1$ and $\mr{j} _2$ are isometric embeddings.  That is, $\cH (k^\cap )$ embeds, as a closed, dense subspace of $\cH (k ^{ac})$, which embeds isometrically into $\cH (k)$ and we conclude that $\cH (k ^\cap) = \cH (k^{ac})$ so that $k^\cap = k^{ac}$ and $\fq _D = \fq _A$. By the uniqueness of Kato's Riesz representation of closed, positive semi-definite forms, $D=A$ as closed operators. 

The fact that $k = k^{ac} + k^s$, implies that if $\mr{e} _{ac} : \cH (k^{ac} ) \hookrightarrow \cH (k)$ is the isometric embedding and $\mr{e} _s : \cH (k^s) \hookrightarrow \cH (k)$ is the contractive (and injective) embedding, that $I = \mr{e} _{ac} \mr{e} _{ac} ^* + \mr{e} _s  \mr{e} _s ^*$. Hence $\mr{e} _s \mr{e} _s ^* = I - P_{ac}$, so that $\cH (k^s)$ also embeds isometrically in $\cH (k)$ as the orthogonal complement of $\cH (k^{ac} )$. 
\end{proof}

\bibliographystyle{abbrvnat}

\begin{thebibliography}{24}
\providecommand{\natexlab}[1]{#1}
\providecommand{\url}[1]{\texttt{#1}}
\expandafter\ifx\csname urlstyle\endcsname\relax
  \providecommand{\doi}[1]{doi: #1}\else
  \providecommand{\doi}{doi: \begingroup \urlstyle{rm}\Url}\fi

\bibitem[Aleksandrov(1987)]{Aleks2}
A.~B. Aleksandrov.
\newblock Multiplicity of boundary values of inner functions. ({R}ussian).
\newblock \emph{Izvestiya Akademii Nauk Armyanskoi SSR}, 22:\penalty0 490--503,
  1987.

\bibitem[Aleksandrov(1995)]{Aleks1}
A.~B. Aleksandrov.
\newblock On the existence of nontangential boundary values of
  pseudocontinuable functions. ({R}ussian).
\newblock \emph{Zapiski Nauchnykh Seminarov POMI}, 222:\penalty0 5--17, 1995.

\bibitem[Aronszajn(1950)]{Aron-rkhs}
N.~Aronszajn.
\newblock Theory of reproducing kernels.
\newblock \emph{Transactions of the American Mathematical Society},
  68:\penalty0 337--404, 1950.

\bibitem[Ball and Vinnikov(2003)]{BV-formalRKHS}
J.~A. Ball and V.~Vinnikov.
\newblock Formal reproducing kernel {H}ilbert spaces: the commutative and
  noncommutative settings.
\newblock In \emph{Reproducing kernel spaces and applications}, pages 77--134.
  Springer, 2003.

\bibitem[Clark(1972)]{Clark}
D.~N. Clark.
\newblock One dimensional perturbations of restricted shifts.
\newblock \emph{Journal d’Analyse Math{\'e}matique}, 25:\penalty0 169--191,
  1972.

\bibitem[de~Branges and Rovnyak(1966)]{dBR-ss}
L.~de~Branges and J.~Rovnyak.
\newblock \emph{Square summable power series}.
\newblock Holt, Rinehart and Winston, 1966.

\bibitem[Fatou(1906)]{Fatou}
P.~Fatou.
\newblock S\'{e}ries trigonom\'{e}triques et s\'{e}ries de {T}aylor.
\newblock \emph{Acta Mathematica}, 37:\penalty0 335--400, 1906.

\bibitem[Fricain and Mashreghi(2016)]{FMHb}
E.~Fricain and J.~Mashreghi.
\newblock \emph{The theory of {$\mathscr{H} (b)$} spaces}, volume~2.
\newblock Cambridge University Press, 2016.

\bibitem[Grenander and Szeg{\"o}(1958)]{Szego}
U.~Grenander and G.~Szeg{\"o}.
\newblock \emph{Toeplitz forms and their applications}.
\newblock University of California Press, 1958.

\bibitem[Halmos and Brown(1963)]{BrownHalmos}
P.~R. Halmos and A.~Brown.
\newblock Algebraic properties of {T}oeplitz operators.
\newblock \emph{Journal f{\"{u}}r die reine und angewandte Mathematik},
  213:\penalty0 89--102, 1963.

\bibitem[Hassi and de~Snoo(2023)]{HdeSnoo2}
S.~Hassi and H.~de~Snoo.
\newblock Complementation and {L}ebesgue type decompositions of linear
  operators and relations.
\newblock arXiv:2308.09408, 2023.

\bibitem[Hassi et~al.(2009)Hassi, Sebesty{\'e}n, and de~Snoo]{HdeSnoo-forms}
S.~Hassi, Z.~Sebesty{\'e}n, and H.~de~Snoo.
\newblock Lebesgue type decompositions for nonnegative forms.
\newblock \emph{Journal of Functional Analysis}, 257:\penalty0 3858--3894,
  2009.

\bibitem[Hoffman(1962)]{Hoff}
K.~Hoffman.
\newblock \emph{Banach spaces of analytic functions}.
\newblock Prentice-Hall, 1962.

\bibitem[Jury and Martin(2022{\natexlab{a}})]{JM-ncFatou}
M.~T. Jury and R.~T.~W. Martin.
\newblock Fatou's theorem for non-commutative measures.
\newblock \emph{Advances in Mathematics}, 400:\penalty0 108293,
  2022{\natexlab{a}}.

\bibitem[Jury and Martin(2022{\natexlab{b}})]{JM-ncld}
M.~T. Jury and R.~T.~W. Martin.
\newblock Lebesgue decomposition of non-commutative measures.
\newblock \emph{International Mathematics Research Notices}, 2022:\penalty0
  2968--3030, 2022{\natexlab{b}}.

\bibitem[Kato(1980)]{Kato}
T.~Kato.
\newblock \emph{Perturbation theory for linear operators}.
\newblock Springer, 1980.

\bibitem[Martin and Shamovich(2021)]{MS-dBB}
R.~T.~W. Martin and E.~Shamovich.
\newblock A de {B}ranges--{B}eurling theorem for the full {F}ock space.
\newblock \emph{Journal of Mathematical Analysis and Applications},
  496:\penalty0 124765, 2021.

\bibitem[Okutmustur and Gheondea(2010)]{GO-rkhs}
B.~Okutmustur and A.~Gheondea.
\newblock \emph{Reproducing Kernel Hilbert spaces. The Basics, {B}ergman
  spaces, and interpolation problems}.
\newblock Lambert, 2010.

\bibitem[Paulsen(2002)]{Paulsen}
V.~I. Paulsen.
\newblock \emph{Completely Bounded Maps and Operator Algebras}.
\newblock Cambridge University Press, New York, NY, 2002.

\bibitem[Paulsen and Raghupathi(2016)]{Paulsen-rkhs}
V.~I. Paulsen and M.~Raghupathi.
\newblock \emph{An introduction to the theory of reproducing kernel {H}ilbert
  spaces}, volume 152.
\newblock Cambridge university press, 2016.

\bibitem[Reed and Simon(1980)]{RnS}
M.~Reed and B.~Simon.
\newblock \emph{Functional Analysis}.
\newblock Academic Press, 1980.

\bibitem[Simon(1978)]{Simon-forms}
B.~Simon.
\newblock A canonical decomposition for quadratic forms with applications to
  monotone convergence theorems.
\newblock \emph{Journal of Functional Analysis}, 28:\penalty0 377--385, 1978.

\bibitem[Szeg{\"{o}}(1920)]{Szego-thm}
G.~Szeg{\"{o}}.
\newblock Beitr{\"a}ge zur theorie der {T}oeplitzschen formen.
\newblock \emph{Mathematische Zeitschrift}, 6:\penalty0 167--202, 1920.

\bibitem[von Neumann(1940)]{vN3}
J.~von Neumann.
\newblock On rings of operators {I}{I}{I}.
\newblock \emph{Annals of Mathematics}, 41:\penalty0 94--161, 1940.

\end{thebibliography}

\Addresses

\end{document}